\newtheorem{lemma}{Lemma}
\newtheorem{remark}{Remark}
\newtheorem{proposition}{Proposition}
\newtheorem{theorem}{Theorem}
\newtheorem{corollary}{Corollary}
\newcommand{\EE}{{\mathbb{E}}}
\newcommand{\mm}{{\mathfrak{m}}}
\newcommand{\PP}{{\mathbb{P}}}
\newcommand{\dZ}{\mathbb {Z}}
\newcommand{\dR}{\mathbb {R}}
\newcommand{\cN}{\mathcal {N}}
\newcommand{\cO}{\mathcal {O}}
\newcommand{\cS}{\mathcal {S}}
\newcommand{\cG}{\mathcal {G}}
\newcommand{\cL}{\mathcal {L}}
\newcommand{\cP}{\mathcal {P}}
\newcommand{\bX}{X}
\newcommand{\bY}{Y}
\newcommand{\bZ}{Z}
\newcommand{\qq}{{\mathfrak q}}
\newcommand{\tmix}{t_{\textsc{mix}}}
\newcommand{\gap}{{\textbf{gap}}}
\newcommand{\bu}{{\mathfrak{v}}}
\newcommand{\bv}{{\bf{v}}}
\newcommand{\bw}{{\mathfrak{w}}}
\title{Cutoff for the mean-field zero-range process}
\author{Mathieu Merle, Justin Salez}
\begin{document}
\maketitle
\begin{abstract}
We study the mixing time of the unit-rate zero-range process  on the complete graph, in the regime where the number $n$ of sites tends to infinity while the density of particles per site stabilizes to some limit $\rho>0$. We prove that the worst-case total-variation distance to equilibrium drops abruptly from $1$ to $0$ at time $n\left(\rho+\frac{1}{2}\rho^2\right)$. More generally, we determine the mixing time from an arbitrary initial configuration. The answer turns out to depend on the largest initial heights in a remarkably explicit way. The intuitive picture is that the system separates into a slowly evolving solid phase and a quickly relaxing liquid phase. As time passes, the solid phase  {dissolves} into the liquid phase, and the mixing time is essentially the time at which the system becomes completely liquid. Our proof combines meta-stability, separation of timescale, fluid limits, propagation of chaos, entropy, and a spectral estimate by Morris (2006).
\end{abstract}
\tableofcontents
\section{Introduction}
\subsection{Model and results}
Introduced by Spitzer in 1970 \cite{MR0268959}, the \emph{zero-range process} has now become a classical model of interacting random walks. In its most general form, every site of a graph $G$ is allowed to contain an arbitrary number of indistinguishable particles, which randomly hop along the edges at a rate that only depends on the number of particles occupying the site of departure. The present paper is concerned with the  \emph{mean-field} setting where $G$ is simply the complete graph of order $n$ and at unit rate, each non-empty site expels a particle to a uniformly chosen site. Formally, the state space is 
\begin{eqnarray*}
\Omega & := & \left\{\eta \in \dZ_+^n\colon \eta_1+\cdots+\eta_n=m\right\},
\end{eqnarray*}
where $m$ represents the total number of particles in the system, and $\eta_i$ the number of particles occupying site $i$. The Markov generator $\cL$ acts on observables $f\colon \Omega\to\dR$ as follows:
\begin{eqnarray}
\label{def:generator}
(\cL f)(\eta) & := & \frac{1}{n}\sum_{1\le i,j\le n}{\bf 1}_{(\eta_i>0)}\left(f(\eta+\delta^j-\delta^i)-f(\eta)\right),
\end{eqnarray}
where $\delta^i_k$ equals $1$ if $k=i$ and $0$ else. This generator is irreducible and symmetric. Consequently, the uniform law $\pi$ on $\Omega$ is reversible, and  the process \emph{mixes}: the transition kernel $P_t=e^{t\cL}$ satisfies
\begin{eqnarray*}
P_t(\eta,\cdot) & \xrightarrow[t\to\infty]{} & \pi,
\end{eqnarray*}
regardless of the choice of the initial state $\eta\in \Omega$.  A standard way to quantify the rate at which this convergence to equilibrium occurs consists in estimating the so-called \emph{mixing time}:
\begin{eqnarray*}
\tmix(\eta;\varepsilon) & := & \min\left\{t\ge 0\colon \|P_t(\eta,\cdot)-\pi\|_{\textsc{tv}}\le \varepsilon\right\}.
\end{eqnarray*}
In this formula, $\varepsilon\in(0,1)$ is a parameter controlling the desired precision, and $\|\mu-\nu\|_{\textsc{tv}}:=\max_{A\subseteq \Omega}\left|\mu(A)-\nu(A)\right|$ denotes the total-variation distance between $\mu$ and $\nu$. Of particular interest is the \emph{worst-case  mixing time}, obtained by maximizing over all possible initial states $\eta\in\Omega$: 
\begin{eqnarray*}
\tmix(\varepsilon) & := & \max\left\{\tmix (\eta;\varepsilon)\colon \eta\in\Omega\right\}.
\end{eqnarray*}
Estimating this fundamental parameter -- and in particular, its precise dependency in $\varepsilon$ -- is in general a challenging task, see the books \cite{MR2341319,MR3726904}. The present paper is concerned with the regime where $n$ tends to infinity, while the density of particles per site stabilizes to some value $\rho\in(0,\infty)$:
\begin{eqnarray}
\label{assume:sparse}
n\to\infty, & \qquad & \frac{m}{n}\to \rho.
\end{eqnarray}
All asymptotic statements will be understood in this sense, and we shall  often keep the dependency upon $n$ implicit in order to lighten the notation. In the regime (\ref{assume:sparse}), a spectral gap estimate due to Morris \cite{MR2271475} implies that $\tmix(\eta;\varepsilon)=\cO(n)$. Here we determine the precise prefactor and express it explicitly in terms of the largest values of $\eta$. To describe the precise result, let us first note that, by symmetry, the initial heights may always be assumed to be arranged in decreasing order:
\begin{eqnarray}
\label{assume:order}
\eta_{1}\ \ge\ \eta_{2}\ \ge\ \ldots\ \ge\ \eta_{n}.
\end{eqnarray}
Passing to a subsequence, we may further assume without loss of generality that for each $k\ge 1$, 
\begin{eqnarray}
\label{assume:profile}
\frac{\eta_{k}}{n} & \xrightarrow[n\to\infty]{}  & u_k,
\end{eqnarray}
for  some non-increasing sequence $(u_k)_{k\ge 1}$ of non-negative numbers. With this standardized setting in mind, our main result can be stated in the following simple way. 
\begin{theorem}[Mixing times]\label{th:main}In the regime (\ref{assume:sparse})-(\ref{assume:order})-(\ref{assume:profile}), we have for each fixed $\varepsilon\in(0,1)$, 
\begin{eqnarray}
\label{eq:main}
\frac{\tmix(\eta;\varepsilon)}{n} & \xrightarrow[n\to\infty]{} & (1+\rho)u_1-\frac 12\sum_{i=1}^\infty u_i^2.
\end{eqnarray}
\end{theorem}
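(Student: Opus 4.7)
The plan is to match a lower bound and an upper bound at the threshold $\tau^\ast n$ with $\tau^\ast=(1+\rho)u_1-\frac12\sum_{i\ge 1} u_i^2$, both driven by a hydrodynamic description of the sorted heights at the diffusive time-scale $\tau=t/n$. Write $S(\tau):=\sum_{i\ge 1}u_i(\tau)$ for the mass held by the macroscopic (solid) piles. Each non-empty site emits at unit rate, while a given site receives at rate $N_t/n$, where $N_t$ is the total number of non-empty sites. A propagation-of-chaos argument for the complementary (liquid) phase should provide an approximate product-geometric law at density $\rho-S(\tau)$, from which $N_t/n\to(\rho-S(\tau))/(1+\rho-S(\tau))$. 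The net drain rate of every currently active pile is therefore $1/(1+\rho-S(\tau))$, identical across all of them, so the whole active profile shifts rigidly downwards. Integrating this common drift yields
\begin{eqnarray*}
\tau^\ast & = & \int_0^{u_1}\Bigl(1+\rho-\sum_{i\ge 1}(u_i-f)_+\Bigr) df\ =\ (1+\rho)u_1-\frac12\sum_{i\ge 1}u_i^2.
\end{eqnarray*}

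For the lower bound I would use $\max_i\eta_i(t)$ as a distinguishing statistic. Under $\pi$, the near-geometric single-site marginals force $\max_i\eta_i=O(\log n)$ with high probability; under $P_t(\eta,\cdot)$ with $t<\tau^\ast n-\delta n$, the fluid limit together with a concentration estimate forces $\max_i\eta_i(t)/n$ to stay close to the macroscopic value $u_1-f(t/n)>0$, where $f$ denotes the common drift above. The event $\{\max_i\eta_i>\varepsilon n\}$ then separates the two laws, yielding $\|P_t(\eta,\cdot)-\pi\|_{\textsc{tv}}\to 1$.

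For the upper bound, run the process up to time $\tau^\ast n+\delta n/2$; by the fluid limit (plus an extra short window) all heights drop below some constant $K=K(\delta)$ with high probability, placing the chain inside the typical bulk of $\pi$. From such a start I would combine Morris's gap bound $\gap=\Omega(1/n)$ (so $\trel=O(n)$) with a relative-entropy estimate: since the solid has dissolved and the liquid has been in quasi-equilibrium at density $\rho-S(t/n)$ throughout the draining phase, the relative entropy with respect to $\pi$ at time $\tau^\ast n+\delta n/2$ is already $o(n)$, and a Poincaré- or entropy-decay inequality then contracts it to sub-$\varepsilon$ total-variation in a further $o(n)$ window.

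The central difficulty is the uniform, quantitative propagation of chaos and quasi-equilibrium for the liquid phase on the entire draining window $[0,\tau^\ast n]$. One must show that while the solid evolves on time-scale $n$, the liquid relaxes to its product-geometric equilibrium at density $\rho-S(t/n)$ on time-scale $o(n)$, uniformly in the (slowly changing) solid configuration. This is precisely the separation-of-timescales and metastability picture announced in the abstract; combining it with entropy control and Morris's gap estimate is what should pin down the exact prefactor and deliver the cutoff at $\tau^\ast n$.
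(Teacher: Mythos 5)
Your overall strategy coincides with the paper's: the same solid/liquid fluid description with common drain rate $1/(1+\rho-S(\tau))$ and the same explicit integration giving $t_1=(1+\rho)u_1-\frac12\sum_i u_i^2$, the same lower bound via the maximum occupancy statistic, and an upper bound combining an $o(n)$ relative-entropy estimate with Morris's spectral gap. The genuine gap is in the step you state in one sentence: that ``since the liquid has been in quasi-equilibrium, the relative entropy with respect to $\pi$ is already $o(n)$.'' This does not follow from propagation of chaos for the empirical distribution $Q(t)$, and it is exactly the difficulty the paper enumerates below Proposition \ref{pr:fast}: sorting the configuration leaves $Q(t)$ unchanged while making the law asymptotically singular to $\pi$, and displacing $o(n)$ particles preserves $\|Q(t)-\cG(\rho)\|\to 0$ yet can push the maximum occupancy far above the equilibrium $\Theta(\log n)$, again giving singularity. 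To turn hydrodynamic information into an entropy bound the paper needs a finer object, the empirical transition matrix $W(t)=\frac1n\sum_i\delta_{(\eta_i(0),\eta_i(t))}$: by partial exchangeability (Lemma \ref{lm:exchange}) the law of $\eta(t)$ given $W(t)$ is uniform on its fibre, so $H(P_t(\eta,\cdot))$ is lower-bounded by combinatorial entropies which, via Proposition \ref{pr:matrix}, Stirling, and the relaxation results (Lemma \ref{lm:mixing}, Proposition \ref{pr:fluid}), converge to $n\,\mathbb{H}(X(t)|X(0))\approx n\,H(\cG(\rho))$. Your proposal contains no substitute for this mechanism. Likewise, ``all heights drop below some constant $K(\delta)$'' at time $t_1 n+\delta n/2$ is not delivered by the fluid limit alone (which only gives $o(n)$ heights); the paper needs the uniform downward drift with exponential moments (Proposition \ref{pr:uniform}) both for this and to restore uniform integrability ($\lambda=\rho$), and it pins the empty-site fraction at $1/(1+\rho-S)$ throughout the draining window via the truncation coupling (Lemma \ref{lm:truncation}) together with fast mixing of the truncated process and the Markov property (Proposition \ref{pr:finitary}) --- the very ``uniform quasi-equilibrium'' you correctly identify as the central difficulty but leave unresolved.

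A second, quantitative slip: in this paper's normalization (each non-empty site rings at rate $1$), Morris's bound is $\gap=\Omega(1)$ (Theorem \ref{th:morris}), not $\Omega(1/n)$. This is not cosmetic: with a gap of order $1/n$, contracting a relative entropy of size $o(n)$ through the $L^2$/Poincar\'e route costs time of order $n\cdot o(n)$, which may be $\gg n$ and ruins the upper bound at the required precision; with the correct $\Omega(1)$ gap, the restriction argument of Lemma \ref{lm:entropygap} yields the additional $o(n)$ window you need. As written, then, the proposal reproduces the paper's architecture but the upper bound is not proved: the $o(n)$ entropy estimate is asserted rather than derived, and the gap normalization used to exploit it is off by a factor of $n$.
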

Note that by Fatou's Lemma, the limiting heights $(u_i)_{i\ge 1}$ must necessarily satisfy
\begin{eqnarray}
\label{ineq:constraint}
\sum_{i=1}^\infty u_i & \le & \rho.
\end{eqnarray}
Under this constraint, the right-hand side of (\ref{eq:main}) is uniquely maximized by taking $u_1=\rho$ and $u_2=u_3=\ldots=0$. Thus, the worst-case mixing time is achieved (at least to first order) by initially placing all particles on the same site, a fact which seems rather intuitive but for which we were not able to find a direct argument. As a consequence, we obtain the following important corollary. 
\begin{corollary}[Worst-case mixing time and cutoff]\label{co:worst}For any fixed $\varepsilon\in(0,1)$, in the regime (\ref{assume:sparse}), 
\begin{eqnarray}
\frac{\tmix(\varepsilon)}{n} & \xrightarrow[n\to\infty]{} & \rho+\frac{1}{2}\rho^2.
\end{eqnarray}
\end{corollary}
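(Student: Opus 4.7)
My plan is to deduce Corollary 1 directly from Theorem 1 by identifying the maximizer of the right-hand side of \eqref{eq:main} over all admissible limiting profiles $(u_i)_{i\ge 1}$, and then using a compactness argument to compare $\tmix(\varepsilon)$ to $\tmix(\eta;\varepsilon)$ for well-chosen sequences of initial configurations $\eta = \eta^{(n)}$.

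For the lower bound, I would simply specialize Theorem 1 to the sequence $\eta^{(n)} := (m,0,\ldots,0)$, which concentrates all particles on a single site. This configuration trivially satisfies \eqref{assume:order} and realizes the profile $u_1=\rho$, $u_i=0$ for $i\ge 2$, for which the right-hand side of \eqref{eq:main} evaluates to $(1+\rho)\rho - \frac 12 \rho^2 = \rho + \frac 12\rho^2$. Since $\tmix(\varepsilon)\ge \tmix(\eta^{(n)};\varepsilon)$, we conclude $\liminf_{n\to\infty}\tmix(\varepsilon)/n \ge \rho + \frac 12\rho^2$.

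The upper bound is the direction requiring a bit more care, since the initial configuration achieving the worst-case may depend on $n$. Suppose by contradiction that $\limsup_{n\to\infty}\tmix(\varepsilon)/n > \rho + \frac 12\rho^2$. Then along some subsequence one can find initial states $\eta^{(n)}$, reordered so that \eqref{assume:order} holds, with $\tmix(\eta^{(n)};\varepsilon)/n$ exceeding $\rho+\frac 12\rho^2$ by a fixed positive amount. Since each coordinate $\eta^{(n)}_k/n$ lies in $[0,\rho+o(1)]$, a diagonal extraction produces a further subsequence along which $\eta^{(n)}_k/n \to u_k$ for every $k\ge 1$, for some non-increasing sequence $(u_k)_{k\ge 1}$. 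Applying Theorem 1 to this sub-subsequence yields $\tmix(\eta^{(n)};\varepsilon)/n \to (1+\rho)u_1 - \frac 12\sum_{i\ge 1}u_i^2$, so it suffices to show that this limit is at most $\rho+\frac 12\rho^2$ under the constraint \eqref{ineq:constraint}.

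The resulting optimization is elementary. Since $u_1 \le \sum_{i\ge 1}u_i \le \rho$ and the univariate quadratic $x\mapsto (1+\rho)x - \frac 12 x^2$ is strictly increasing on $[0,1+\rho]\supset[0,\rho]$, we have $(1+\rho)u_1 - \frac 12 u_1^2 \le (1+\rho)\rho - \frac 12 \rho^2 = \rho+\frac 12\rho^2$, while the remaining contribution $-\frac 12\sum_{i\ge 2}u_i^2$ is non-positive. This delivers the required contradiction and completes the upper bound. There is no real obstacle here beyond bookkeeping: all the genuine probabilistic and analytic content is encapsulated in Theorem 1, and the corollary is a clean consequence of its form together with convex-analytic considerations.
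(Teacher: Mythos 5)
Your argument is correct and matches the paper's own reasoning: the corollary is obtained by maximizing the right-hand side of (\ref{eq:main}) under the constraint (\ref{ineq:constraint}), with the maximum $\rho+\frac12\rho^2$ attained at $u_1=\rho$, $u_2=u_3=\cdots=0$, and the configuration with all particles on one site realizing it. Your explicit diagonal-extraction step for the upper bound is just a careful spelling-out of the subsequence convention already built into the statement of Theorem \ref{th:main}, so there is no substantive difference.
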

The remarkable fact that the precision parameter $\varepsilon\in(0,1)$ is absent from the limit adds the mean-field zero-range process to the growing list of  chains exhibiting what is known as a \emph{cutoff} \cite{MR1374011}:  instead of decaying gradually, the  total-variation distance to equilibrium stays close to $1$ until the mixing time, and then abruptly drops to $0$  over a much shorter timescale. We suspect the cutoff width to be here $\Theta(\sqrt{n})$, with a Gaussian profile in the limit. However,  our estimates are not precise enough  to establish this second-order refinement, which we leave as a conjecture. 
\subsection{The solid-liquid heuristic}
\emph{Condensation} is one of the most remarkable features of the zero-range process. In our setting, the total rate at which particles are expelled from a given site is $1$, regardless of the number $k$ of particles occupying that site. Consequently, the effective rate at which each particle is expelled is $1/k$ only: denser regions evolve more slowly. This simple observation naturally leads to a formal decomposition of the system into two components, or \emph{phases}, relaxing on very different timescales:
\begin{itemize}
\item A (slow) \emph{solid} phase, consisting of those few sites which are occupied by $\Theta(n)$ particles.
\item A (quick) \emph{liquid} phase, formed by those sites that are occupied by $o(n)$ particles.    
\end{itemize}
The presence of a solid phase is a clear indication that the system is out of equilibrium, since under the uniform distribution, the maximum occupancy is only logarithmic in $n$. The case $u_1=0$ in Theorem \ref{th:main} indicates that the converse is also true: in the absence of a solid phase, the system reaches equilibrium in negligible time. The proof of this fact occupies a substantial part of the paper. In light of it, the picture becomes much clearer: as time passes, the solid phase described by the profile (\ref{assume:profile}) progressively \emph{dissolves} into the liquid phase, and the mixing time is essentially the time at which the system becomes completely liquid. Note that the dissolution occurs on a time-scale of order $n$, since the effective jump rate per particle in the solid phase is $\Theta\left(\frac 1n\right)$. 

To obtain the precise prefactor appearing in the right-hand side of (\ref{eq:main}), we need to estimate the instantaneous melting rate of a solid site. In our mean-field setting, this is precisely the proportion of empty sites in the system, which in turns depends on the density of the liquid phase. What makes the problem tractable, despite this cyclic interaction between the two phases, is a  \emph{separation of timescales} phenomenon: the liquid phase relaxes so quickly that, on the relevant timescale, the solid phase may be considered as inert. Consequently, the liquid phase is permanently maintained in a \emph{metastable} state which resembles the true equilibrium, except that its  density is lower because a macroscopic number of particles are  ``stuck'' in the solid phase. This imposes a simple asymptotic relation between  the number of particles in the solid phase and the proportion of empty sites. As a consequence, the evolution of the solid phase can be approximated by an autonomous system of differential equations, whose explicit resolution yields the precise formula appearing in Theorem \ref{th:main}.

\subsection{Proof outline}
To make the above picture rigorous, we proceed in three steps, each occupying a  whole section. 
In Section \ref{sec:liquid}, we get a rough idea of the system by ignoring the precise {geometry} of the zero-range process $(\eta(t)\colon t\ge 0)$ and focusing on the distribution of the number of particles on a \emph{typical} site. This data is encoded into the so-called \emph{empirical distribution} of the system:
\begin{eqnarray}
\label{def:empirical}
Q(t) & := & \frac{1}{n}\sum_{i=1}^n\delta_{\eta_i(t)}.
\end{eqnarray}
For convergence purposes, we regard $\cP(\dZ_+)$ as a subset of $\ell^1(\dZ_+)$, with norm
$
\|q\| := \sum_{k=0}^\infty|q_k|.
$
At equilibrium, the empirical distribution is simple: if $\xi$ is uniform on $\Omega$, then in the regime (\ref{assume:sparse}),
\begin{eqnarray}
\label{equilibrium}
\left\|\frac{1}{n}\sum_{i=1}^n\delta_{\xi_i}-\cG(\rho)\right\| & \xrightarrow[n\to\infty]{\PP} & 0,
\end{eqnarray}
where $\xrightarrow[]{\PP}$ denotes convergence in probability, and  $\cG(\rho)$ the geometric distribution with mean $\rho$, i.e.
\begin{eqnarray*}
\cG_k(\rho) & = & \frac{1}{1+\rho}\left(\frac{\rho}{1+\rho}\right)^k.
\end{eqnarray*}
To discuss the $n\to\infty$ limit of the process $(Q(t)\colon t\ge 0)$, it will be convenient to  assume that 
\begin{eqnarray}
\label{assume:empirical}
Q(0) & \xrightarrow[n\to\infty]{} & q,
\end{eqnarray}
for some $q\in\cP(\dZ_+)$.  It turns out that this suffices to guarantee the convergence of the whole process $(Q(t)\colon t\ge 0)$. Moreover, the limit  $(\qq(t)\colon t\ge 0)$ is deterministic and characterized by the initial data $\qq(0)=q$ through the following explicit (non-linear) dynamics:
\begin{eqnarray}
\label{def:fluid}
\frac{d\qq_k}{dt} & = & \qq_{k+1}-\qq_k{\bf 1}_{(k\ge 1)}-\left(\sum_{\ell\ge 1}\qq_\ell\right)\left(\qq_k-\qq_{k-1}{\bf 1}_{(k\ge 1)}\right).
\end{eqnarray}
In the fluid limit literature, results of this type are referred to as  \emph{propagation of chaos} \cite{MR1108185}.
\begin{proposition}[Propagation of chaos]\label{pr:chaos}Under assumptions (\ref{assume:sparse}) and (\ref{assume:empirical}), we have
\begin{eqnarray*}
\sup_{t\in[0,T]}\left\|Q(t)-\qq(t)\right\|  & \xrightarrow[n\to\infty]{\PP} & 0,
\end{eqnarray*}
for any fixed horizon $T\ge 0$,  where  $(\qq(t)\colon t\ge 0)$ is the unique solution to (\ref{def:fluid}) with $\qq(0)=q$.
\end{proposition}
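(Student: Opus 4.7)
The plan is to apply Gronwall's inequality in $\ell^1(\dZ_+)$ to the difference $Q(t)-\qq(t)$, after establishing a martingale decomposition for the empirical measure. Expanding the generator (\ref{def:generator}) on the observable $\eta\mapsto Q_k(\eta):=\frac{1}{n}|\{i:\eta_i=k\}|$, a direct calculation gives $\cL Q_k(\eta) = F_k(Q) + O(1/n)$, where
\begin{eqnarray*}
F_k(q)\ :=\ q_{k+1}-q_k\mathbf{1}_{k\geq 1}-(1-q_0)\bigl(q_k-q_{k-1}\mathbf{1}_{k\geq 1}\bigr)
\end{eqnarray*}
is exactly the right-hand side of (\ref{def:fluid}). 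The $O(1/n)$ error collects the diagonal $i=j$ contributions (which genuinely vanish in the dynamics); these are $n$ in number, each of size $n^{-2}$, and they are summable in $k$ with $\ell^1$-norm $O(1/n)$. Dynkin's formula then yields, for each $k\geq 0$, a c\`adl\`ag martingale $M_k$ with $M_k(0)=0$ such that
\begin{eqnarray*}
Q_k(t)\ =\ Q_k(0)\ +\ \int_0^t F_k(Q(s))\D s\ +\ M_k(t)\ +\ \varepsilon_k(t),
\end{eqnarray*}
with $\sup_{t\leq T}\sum_k|\varepsilon_k(t)|\leq CT/n$.

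A routine coordinate-wise computation, using $|(1-q_0)-(1-q'_0)|\leq\|q-q'\|$ together with $1-q_0\leq 1$ and $\sum_k q'_k=1$, shows that $F$ is $6$-Lipschitz on $\cP(\dZ_+)$ in the $\ell^1$-norm. Subtracting the integral version of (\ref{def:fluid}) from the decomposition above and summing over $k$ yields
\begin{eqnarray*}
\|Q(t)-\qq(t)\|\ \leq\ \|Q(0)-q\|\ +\ 6\int_0^t\|Q(s)-\qq(s)\|\D s\ +\ \sup_{s\leq T}\|M(s)\|\ +\ \frac{CT}{n},
\end{eqnarray*}
so Gronwall's lemma gives $\sup_{t\leq T}\|Q(t)-\qq(t)\|\leq e^{6T}\bigl(\|Q(0)-q\|+\sup_{s\leq T}\|M(s)\|+CT/n\bigr)$. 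Assumption (\ref{assume:empirical}) kills the initial discrepancy, so it only remains to prove that $\sup_{s\leq T}\|M(s)\|\xrightarrow[n\to\infty]{\PP}0$.

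This martingale control is the main technical obstacle, because summing absolute values over the unbounded index set $\dZ_+$ is non-trivial. For each fixed $k$, one jump alters $Q_k$ by at most $2/n$ and the total jump rate is bounded by $n$, so the predictable bracket satisfies $\langle M_k\rangle_T\leq 4T/n$, and Doob's $L^2$-inequality gives $\EE[\sup_{t\leq T}|M_k(t)|]\leq 4\sqrt{T/n}$. We split the sum at a threshold $K$ to be optimized. For the low modes, a crude union bound yields
\begin{eqnarray*}
\EE\left[\sup_{t\leq T}\sum_{k\leq K}|M_k(t)|\right]\ \leq\ 4(K+1)\sqrt{T/n}.
\end{eqnarray*}
For the high modes, particle conservation $\sum_k kQ_k(t)=m/n\leq 2\rho$ together with Markov's inequality forces $\sum_{k>K}Q_k(t)\leq 2\rho/K$, and a parallel bound controls $\sum_{k>K}|F_k(Q(s))|$; injecting these tail estimates into the explicit expression of $M_k$ gives $\sup_{t\leq T}\sum_{k>K}|M_k(t)|\leq C(1+T)/K$ deterministically, for $n$ large enough. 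Choosing $K=\lfloor n^{1/4}\rfloor$ balances the two contributions and yields $\EE[\sup_{t\leq T}\|M(t)\|]=O(n^{-1/4})$, which concludes the proof via Markov's inequality.
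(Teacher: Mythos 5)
Your overall strategy (Dynkin decomposition of $Q$, an $O(1/n)$ remainder from the excluded self-jumps, Gr\"onwall in $\ell^1$, and a martingale control obtained by splitting the coordinates at a threshold $K$ and using particle conservation $\sum_k kQ_k=m/n$ for the tail) is essentially the paper's proof, and your treatment of the martingale term is sound — it differs from the paper only in packaging (coordinate-wise Doob for $k\le K$ plus a deterministic tail bound, versus the paper's weighted first/second moment bounds at time $T$ followed by Doob's inequality applied to the submartingale $\|M(t)\|$), and either version works.

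The genuine gap is in the Gr\"onwall step, and more broadly in the well-posedness of the fluid limit, which is part of the statement (``the unique solution to (\ref{def:fluid})''). You never prove that a solution $\qq$ exists, and your comparison uses the bound $\|F(\qq(s))-F(Q(s))\|\le 6\|\qq(s)-Q(s)\|$, whose proof (via $1-\qq_0(s)\le 1$ and $\|\qq(s)\|=1$) is only valid when $\qq(s)\in\cP(\dZ_+)$. Nothing in your argument shows that $\qq(s)$ is non-negative with mass one for all $s\in[0,T]$: Picard--Lindel\"of applied in $\ell^1(\dZ_+)$ only yields a \emph{local} solution, a priori allowed to leave $\cP(\dZ_+)$ or even to blow up in finite time; moreover, off $\cP(\dZ_+)$ your rewriting of $\sum_{\ell\ge1}q_\ell$ as $1-q_0$ is a \emph{different} vector field from the one in (\ref{def:fluid}), so even the identification of ``the'' solution is unclear. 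The paper circumvents exactly this circularity: it proves only the local Lipschitz bound $\|F(q)-F(q')\|\le 2(1+\|q\|+\|q'\|)\|q-q'\|$, runs Gr\"onwall on $[0,T]$ for $T$ strictly below the maximal existence time $T_\star$ with the factor $\exp\{2\int_0^T(2+\|\qq(s)\|)\,ds\}$ (finite since $\|\qq\|$ is locally bounded on $[0,T_\star)$), and then uses the resulting convergence $Q(t)\to\qq(t)$ to conclude a posteriori that $\qq(t)\in\cP(\dZ_+)$, which rules out explosion and gives $T_\star=\infty$. To repair your proof you must either adopt this bootstrap, or prove directly that the flow of (\ref{def:fluid}) preserves $\cP(\dZ_+)$ (non-negativity plus $\sum_kF_k(q)=0$), which requires an additional argument you have not supplied.
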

Metastability will then consist in showing that the fluid limit $\qq(t)$ relaxes as $t\to\infty$ towards a geometric profile as in (\ref{equilibrium}), except that $\rho$ is replaced by the \emph{tilted} density
\begin{eqnarray}
\label{def:lambda}
\lambda & := & \sum_{k=1}^\infty kq_k.
\end{eqnarray}
\begin{proposition}[Relaxation for the fluid limit]\label{pr:fluid}
We have $\qq(t) \xrightarrow[t\to\infty]{} \cG(\lambda)$.
\end{proposition}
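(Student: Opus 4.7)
The plan is to exhibit the Shannon entropy as a strict Lyapunov function for (\ref{def:fluid}) and then close via a LaSalle-type invariance principle, exploiting that $\cG(\lambda)$ uniquely maximises entropy among distributions on $\dZ_+$ with mean $\lambda$. The preliminary step is to record two conservation laws, obtained by telescoping directly from (\ref{def:fluid}): the total mass $\sum_k\qq_k(t)\equiv 1$ and the first moment $\sum_k k\qq_k(t)\equiv\lambda$. Writing $s(t):=1-\qq_0(t)=\sum_{k\ge 1}\qq_k(t)$, I would observe that (\ref{def:fluid}) is the master equation of a (time-inhomogeneous) birth-death chain on $\dZ_+$ with birth rate $s(t)$ and death rate $\mathbf{1}_{k\ge 1}$, which already identifies $\cG(\lambda)$ as the unique admissible stationary distribution via mean conservation.

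The core computation is that of $\tfrac{d}{dt}F(\qq(t))$ with $F(\qq):=\sum_k\qq_k\log\qq_k$. Introducing the fluxes $\Phi_k:=\qq_{k+1}-s\qq_k$ so that $\partial_t\qq_k=\Phi_k-\Phi_{k-1}\mathbf{1}_{k\ge 1}$, summation by parts yields
\[
\frac{dF}{dt}\;=\;-\sum_{k\ge 0}(\qq_{k+1}-s\qq_k)\log\frac{\qq_{k+1}}{s\qq_k}\;\le\;0,
\]
by the elementary inequality $(a-b)\log(a/b)\ge 0$, with equality iff $\qq_{k+1}=s\qq_k$ for every $k$, i.e.\ iff $\qq$ is geometric; mean conservation then forces $\qq=\cG(\lambda)$. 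Since the maximum-entropy principle bounds $F$ below by $F(\cG(\lambda))$ on the constraint $\sum k\qq_k=\lambda$, $F(\qq(t))$ decreases monotonically to some $F_\infty\ge F(\cG(\lambda))$.

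To conclude I would invoke LaSalle: conservation of $\sum k\qq_k$ gives tightness of the trajectory in $\cP(\dZ_+)$; along any sequence $t_n\uparrow\infty$, extract a sub-limit $\tilde q$. Restarting (\ref{def:fluid}) from $\tilde q$ (using continuous dependence on initial data, since the right-hand side of (\ref{def:fluid}) is locally Lipschitz on $\ell^1$) together with continuity of $F$ forces $F$ to be constant along the restarted orbit; hence $dF/dt\equiv 0$ along it and $\tilde q=\cG(\lambda)$. Uniqueness of the sub-limit yields full convergence, which Scheff\'e's lemma then lifts to $\ell^1$. The main obstacle will be the familiar infinite-dimensional nuisance: weak convergence on $\cP(\dZ_+)$ can leak mass to infinity, so both the term-by-term differentiation of $F(\qq(t))$ and the continuity needed in the LaSalle step require a uniform-in-time tail bound strictly stronger than mere mean conservation --- for instance a uniform control on $\sum k^2\qq_k(t)$ or on $F(\qq(t))$ itself. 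Proving such an a-priori estimate directly from (\ref{def:fluid}) via a Gr\"onwall argument is where the bulk of the technical work will lie.
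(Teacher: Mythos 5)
Your Lyapunov functional is exactly the one the paper uses: after simplification (using $\sum_k(\qq_{k+1}-s\qq_k)=0$), your dissipation $\sum_k(\qq_{k+1}-s\qq_k)\log\frac{\qq_{k+1}}{s\qq_k}$ is precisely the quantity $V(\qq)$ of Lemma \ref{lm:entropy}, i.e.\ $(1-\qq_0)$ times the symmetrized Kullback--Leibler divergence between $\qq$ and its shifted law $\widehat{\qq}$. Your closing via LaSalle is a workable variant of the paper's ending, though heavier than needed: the paper never invokes invariance or continuity of $F$ (entropy is only lower semicontinuous, not continuous, so your ``continuity of $F$'' claim already requires the same tail control you flag); it simply extracts one sequence $t_n$ with $V(\qq(t_n))\to 0$, identifies the limit point as $\cG(\lambda)$, and concludes with Pinsker's inequality together with monotonicity of the entropy and Fatou.

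The genuine gap is the point you defer to ``the bulk of the technical work'', and the route you sketch for it would fail. To conclude that a sub-limit $\tilde q$ is $\cG(\lambda)$ rather than $\cG(\mu)$ with $\mu<\lambda$, you must rule out escape of first-moment mass to infinity, i.e.\ you need uniform-in-time integrability of $(\qq(t))_{t\ge 0}$; but a uniform bound on $\sum_k k^2\qq_k(t)$ cannot be extracted from (\ref{def:fluid}) by a Gr\"onwall argument. Indeed, telescoping (formally) gives $\frac{d}{dt}\sum_k k^2\qq_k(t)=2(1+\lambda)\bigl(1-\qq_0(t)\bigr)-2\lambda$, which is nonnegative --- so the second moment grows, possibly linearly in $t$ --- whenever $1-\qq_0(t)\ge \lambda/(1+\lambda)$; the ODE alone only yields $1-\qq_0\le 1$, i.e.\ the fluid limit is the law of a birth--death chain whose birth rate may stay arbitrarily close to critical, and no moment or exponential-moment estimate closes without first proving a lower bound on the void probability $\qq_0(t)$ that is uniform in $t$. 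This is exactly the extra input the paper supplies through the probabilistic representation (\ref{def:w})--(\ref{def:z})--(\ref{representation}): the coupling of Lemma \ref{lm:void} gives $\inf_{t\ge 0}\qq_0(t+s)>0$, and Lemma \ref{lm:UI} then dominates $X(t)$ by a subcritical stationary birth--death chain, yielding uniform integrability. (Your alternative suggestion, a uniform bound on $F(\qq(t))$, would not help either: bounded entropy with mean $\le\lambda$ does not prevent first-moment leakage.) By contrast, your other deferred point --- justifying the term-by-term differentiation in $dF/dt$ --- is fillable using only mean conservation (Lemma \ref{lm:mass}) via the truncation-and-bootstrap of Lemma \ref{lm:entropy}; the missing idea is the uniform void-probability/uniform-integrability estimate.
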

Entropy will play a crucial role in the proof of this result. Note  that by Fatou's Lemma, we always have $\lambda\le\rho$, with strict inequality in the presence of a solid phase. We emphasize that time has not been rescaled with $n$ here:  the empirical distribution $Q(t)$ approaches the metastable equilibrium $\cG(\lambda)$ on a timescale $\Theta(1)$ only.
In Section \ref{sec:fast}, we build upon the above results to establish the case $u_1=0$ of Theorem \ref{th:main}, which ensures fast mixing  in the absence of a solid phase, i.e. when
\begin{eqnarray}
\label{assume:liquid}
\max_{1\le i\le n}\eta_i & = & o(n).
\end{eqnarray}
\begin{proposition}[Fast mixing]In the regime (\ref{assume:sparse})-(\ref{assume:liquid}), we have $\tmix(\eta;\varepsilon)=o(n)$.
\label{pr:fast}
\end{proposition}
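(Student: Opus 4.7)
The plan is to combine Propositions~\ref{pr:chaos} and~\ref{pr:fluid} with the spectral estimate of Morris in order to go from \emph{empirical closeness at a fixed time} to \emph{total-variation closeness on a sub-linear time scale}. By the permutation invariance of $\cL$ and $\pi$, re-labelling sites by a uniformly random permutation does not affect $\|P_t(\eta,\cdot)-\pi\|_{\textsc{tv}}$, so I may assume the law of $\eta(t)$ is exchangeable on $\Omega$ for all $t\ge 0$. Under the liquid assumption (\ref{assume:liquid}), no mass of $\eta/n$ can escape to infinity, so the limit profile $q$ in (\ref{assume:empirical}) satisfies $\sum_{k\ge 1}kq_k=\rho$; equivalently, $\lambda=\rho$ in the notation of (\ref{def:lambda}). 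Proposition~\ref{pr:fluid} then gives $\qq(t)\to\cG(\rho)$ as $t\to\infty$, and combining with Proposition~\ref{pr:chaos} yields: for every $\delta>0$ there is a \emph{fixed} time $T=T(\delta)$ with $\PP\bigl(\|Q(T)-\cG(\rho)\|\le\delta\bigr)\ge 1-\delta$ uniformly in $n$.

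The real work is to upgrade this $\ell^1$-closeness of the empirical measure into a total-variation bound on the full configuration. Both $P_T(\eta,\cdot)$ and $\pi$ are exchangeable on $\Omega$ and are therefore mixtures of uniform laws on the $S_n$-orbits, which are parameterized by the type vectors $\mathbf{n}=(n_k)_{k\ge 0}$ with $n_k=|\{i:\eta_i=k\}|$. Hence the comparison reduces to comparing the two laws of $\mathbf{n}$. A de~Finetti-type strengthening of Proposition~\ref{pr:chaos} (convergence of joint finite-dimensional marginals to products) will be used to argue that, conditional on the type, the law of $\eta(T)$ is already nearly uniform on the corresponding orbit. To handle the \emph{unconditional} distribution of $\mathbf{n}$, I run the chain for an additional time $t(n)=o(n)$ past $T$ and invoke the spectral estimate of Morris~\cite{MR2271475}: a standard $L^2$- (or entropy-) decay argument, applied to the warm start provided by the first stage, suffices to relax the type vector on its natural fluctuation scale and match the law of $\mathbf{n}$ under $\pi$, which by (\ref{equilibrium}) is concentrated around $(n\cG_k(\rho))_{k\ge 0}$ with Gaussian-order fluctuations.

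\paragraph{Main obstacle.} The delicate point is the mismatch of scales between Propositions~\ref{pr:chaos}--\ref{pr:fluid} and total variation: the fluid-limit input only controls the empirical measure on the \emph{macroscopic} scale $\Theta(1)$, whereas TV comparison of the type vectors requires control on the \emph{mesoscopic} scale $\Theta(1/\sqrt n)$. Bridging this gap is precisely where Morris' spectral bound enters; quantifying the required sub-linear post-relaxation time $t(n)$, and verifying that the warm start produced in the first stage is good enough for an $L^2$-decay argument to succeed on a good subset of $\Omega$, is the technical heart of Section~\ref{sec:fast}.
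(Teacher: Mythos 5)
Your plan breaks down at two specific points before it ever reaches the part you label the ``technical heart''. First, the claim that (\ref{assume:liquid}) forces $\lambda=\rho$ is false: taking $\eta_1=\cdots=\eta_k=m/k$ and $\eta_{k+1}=\cdots=\eta_n=0$ with $1\ll k\ll n$ gives $\max_i\eta_i=o(n)$ while $Q(0)\to\delta_0$, so $\lambda=0$. This is exactly issue (i) listed after the statement of Proposition \ref{pr:fast}, and it is why the paper first proves a uniform downward drift (Lemma \ref{lm:uniform} and Proposition \ref{pr:uniform}) and runs the chain for a burn-in time $\Theta(\max_i\eta_i)=o(n)$, after which exponential moments of the heights are bounded and only then is $\lambda=\rho$ available. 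Your proposal has no analogue of this step. Second, the reduction ``I may assume the law of $\eta(t)$ is exchangeable'' is not legitimate: averaging over a uniform relabelling of the sites can only \emph{decrease} the total-variation distance to $\pi$, and can decrease it drastically (sort an equilibrium sample in decreasing order: the resulting law is asymptotically singular to $\pi$, yet its symmetrization is exactly $\pi$). The law $P_t(\eta,\cdot)$ genuinely remembers \emph{which} sites started high, and bounding the symmetrized distance proves something strictly weaker than mixing. The paper keeps this information rather than erasing it: the relevant object is the empirical transition matrix $W(t)$, Lemma \ref{lm:exchange} says the conditional law given $W(t)$ (not given the type vector) is uniform on the compatible configurations, and the decay of the initial-condition dependence is quantified through Lemma \ref{lm:mixing} and the conditional entropy $\mathbb{H}(X(t)|X(0))$.

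Even granting those reductions, the step you defer --- producing a ``warm start'' good enough for the spectral estimate --- is the actual content of the proof and your mechanism for it is circular. The contraction (\ref{gap:contraction}) requires control of $\max_x\nu(x)/\pi(x)$ (equivalently, relative entropy $o(n)$); closeness of the empirical measure at scale $\Theta(1)$, or convergence of finite-dimensional marginals, gives no such control, and the gap cannot be used to ``relax the type vector on its $\sqrt n$ fluctuation scale'' in time $o(n)$ without already having the warm start you are trying to build. The paper closes this loop by a counting argument: conditionally on $W(t)$ the law is uniform on a set whose size is a product of multinomial coefficients, so Stirling (\ref{stirling}) and Proposition \ref{pr:matrix} give $H(P_t(\eta,\cdot))\ge n\,\mathbb{H}(X(t)|X(0))-o(n)$, whence $D_{\textsc{kl}}(P_s(\eta,\cdot)\,\|\,\pi)=o(n)$ for $s$ growing slowly (using Proposition \ref{pr:fluid} with $\lambda=\rho$ and Lemma \ref{lm:mixing}); then the dedicated Lemma \ref{lm:entropygap}, combined with Morris's bound $\gap=\Omega(1)$, turns $o(n)$ relative entropy into total-variation mixing in a further time $o(n)$. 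No matching of type-vector laws at the Gaussian fluctuation scale is needed, and none is achievable from the fluid-limit input alone. So while you correctly identify the three ingredients (fluid limit, some entropy-type bound, Morris's gap), the two reductions your argument rests on are false and the quantitative entropy step is missing.
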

In a sense, Propositions \ref{pr:chaos} and \ref{pr:fluid} already indicate this: if $Q(0)$ is uniformly integrable, then $\lambda=\rho$ and therefore, $Q(t)$ can be made arbitrarily close to the equilibrium profile $\cG(\rho)$ by choosing $t$ large, independently of $n$. This is, however, much weaker than  Proposition \ref{pr:fast} in three respects:
\begin{enumerate}[(i)]
\item The assumption (\ref{assume:liquid}) is far from ensuring that $\lambda=\rho$: the choice $\eta_1=\ldots=\eta_k=\frac{m}{k}$ and $\eta_{k+1}=\ldots=\eta_n=0$ with $1\ll k \ll n$ does satisfies $\max \eta=o(n)$, and yet $\lambda=0$ !  
\item  The empirical distribution $Q$ says nothing about the positions of the particles: if the system is exactly at equilibrium and we re-arrange the particles so that $\eta_1\ge\ldots\ge \eta_n$, then $Q$ is unchanged and yet the law of the system becomes asymptotically singular to $\pi$ ! 
\item 
The  convergence $\|Q(t)-\cG(\rho)\|\to 0$ is still far too weak to imply that the law of $Q(t)$ is close to equilibrium in total variation: moving $o(n)$ particles in an arbitrary way will not affect the convergence $\|Q(t)-\cG(\rho)\|\to 0$ and yet, changing the maximum occupancy from $\Theta(\log n)$ to anything larger already suffices to make the law of $Q(t)$ singular to equilibrium. 
\end{enumerate}
Finally, in Section \ref{sec:solid}, we provide the following description for the dissolution of the solid phase.
\begin{proposition}
\label{pr:densefluid}
In the regime (\ref{assume:sparse})-(\ref{assume:order})-(\ref{assume:profile}), we have for any fixed $T\ge 0$ and $i\ge 1$, 
\begin{eqnarray*}
\sup_{t\in[0,T]}\left|\frac{\eta_i(nt)}{n}-\bu_i(t)\right| & \xrightarrow[n\to\infty]{\PP} &  0,
\end{eqnarray*}
where the functions $\bu_1(t),\bu_2(t),\ldots$ are deterministic and satisfy
\begin{eqnarray}
\label{fluid:dense}
\bu_i(t) & = & \left(u_i -\int_0^t\frac{1}{1+\rho-\sum_{j=1}^\infty \bu_j(s)}ds\right)_+.
\end{eqnarray}  
\end{proposition}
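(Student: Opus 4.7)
The plan is to derive a closed stochastic equation for the rescaled heights $\bu_i^{(n)}(t):=\eta_i(nt)/n$ through a Dynkin decomposition, and then to identify the drift using metastability. A direct computation with \eqref{def:generator} yields
\[
(\cL\eta_i)(\eta) \;=\; 1-Q_0(\eta) - \mathbf{1}_{\eta_i>0},
\]
where $Q_0(\eta)=\frac{1}{n}|\{k\colon\eta_k=0\}|$. Dynkin's formula together with the time change $s\mapsto ns$ then gives
\[
\bu_i^{(n)}(t) \;=\; \bu_i^{(n)}(0) + \int_0^t\!\bigl(1 - Q_0(\eta(ns)) - \mathbf{1}_{\eta_i(ns)>0}\bigr)\D s + \mathcal{M}_i^{(n)}(t),
\]
with $\mathcal{M}_i^{(n)}$ a martingale of quadratic variation $O(t/n)$, so that $\sup_{t\le T}|\mathcal{M}_i^{(n)}(t)|\to 0$ in probability by Doob's inequality. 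For indices $i$ with $u_i>0$, the site remains solid throughout a macroscopic initial interval and the drift reduces to $-Q_0(\eta(ns))$; past the dissolution time the site behaves as a typical liquid site with maximum occupancy $O(\log n)$, so $\bu_i^{(n)}(t)\to 0$, consistent with the positive part in \eqref{fluid:dense}.

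The heart of the argument is the metastability estimate: for each fixed $s_0>0$,
\[
Q_0(\eta(ns_0)) \;\xrightarrow[n\to\infty]{\PP}\; \frac{1}{1+\lambda(s_0)}, \qquad \lambda(s_0):=\rho-\sum_{j\ge 1}\bu_j(s_0).
\]
The idea is a separation of timescales: on any fixed \emph{original}-time window $[ns_0,\,ns_0+T_1]$, the solid heights move by at most $O(T_1)=o(n)$ and the liquid mass is frozen at $\lambda(s_0)+o(1)$. Applying Proposition~\ref{pr:chaos} on this shifted window of fixed length $T_1$ and then Proposition~\ref{pr:fluid} with $T_1$ taken large shows that the empirical measure is close to $\cG(\lambda(s_0))$; reading off the zeroth coordinate gives the claim. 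Substituting back into the Dynkin identity delivers, for each $i$ with $u_i>0$,
\[
\bu_i^{(n)}(t) \;=\; u_i - \int_0^t\frac{\D s}{1+\rho-\sum_{j\ge 1}\bu_j^{(n)}(s)} + o_\PP(1).
\]
Lipschitz uniqueness for the integral equation \eqref{fluid:dense}, combined with a Gronwall-type argument and tightness, identifies any subsequential limit with $(\bu_i)_{i\ge 1}$. The infinite sum is controlled via a truncation argument: the bound $\sum_j u_j\le\rho$ from \eqref{ineq:constraint}, together with conservation of total mass, yields a uniform tail estimate on $\sum_{j>K}\bu_j^{(n)}(s)$.

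The main obstacle is the metastability estimate. The difficulty lies in bootstrapping it: to assert metastability at time $ns_0$ via the Markov property one needs the empirical distribution at time $ns_0-T_1$ to already be close to a geometric profile, which itself requires the estimate at an earlier time. A natural route is to iterate the local-window argument with $T_1=T_1(n)\to\infty$ slowly, exploiting that $\cG(\lambda)$ is the unique $\lambda$-preserving fixed point of the dynamics \eqref{def:fluid}, so that the fluid limit is \emph{attracting} and not merely convergent. Quantitative attraction rates---likely via the entropy methods emphasised in the introduction---would let the adiabatic variation in $\lambda(s)$ be absorbed without disrupting the geometric approximation, closing the loop between the slow solid phase and the fast liquid phase.
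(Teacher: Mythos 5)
Your Dynkin decomposition of $\eta_i(nt)/n$ and the subsequent Gr\"onwall/tightness scheme match the paper's proof of the weak form, but the central ingredient --- the dissolution-rate (metastability) estimate $Q_0(ns)\approx \bigl(1+\rho-\sum_{j}\bu_j(s)\bigr)^{-1}$ --- is not actually established in your proposal, and this is a genuine gap, which you yourself flag as ``the main obstacle''. Your route (apply Proposition \ref{pr:chaos} on a shifted window of length $T_1$, then Proposition \ref{pr:fluid} for $T_1$ large) runs into two problems you do not resolve: Proposition \ref{pr:fluid} is a qualitative $t\to\infty$ statement with no rate, while the initial datum of each window is a random, $n$-dependent empirical law whose $\ell^1$-subsequential limit may have mean strictly below $\rho-\sum_j\bu_j(s)$ (mass can escape at intermediate scales, exactly the phenomenon behind issue (i) after Proposition \ref{pr:fast}); and the bootstrapping/adiabatic scheme you sketch (windows of slowly growing length, ``quantitative attraction rates via entropy'') is a speculative program --- the entropy production identity of Lemma \ref{lm:entropy} gives no rate without a further functional inequality, and nothing of the sort is available in the paper. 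The paper closes this loop by a different mechanism (Proposition \ref{pr:finitary}): it couples the process with the one started from the \emph{truncated} configuration $\widehat\eta$ obtained by emptying the $L$ solid sites (Lemma \ref{lm:truncation}, with coupling error $O(L(1+T)/n)$, i.e.\ $O(L\varepsilon)$ over a window of length $n\varepsilon$); the truncated process is fully liquid, so the already-proved fast-mixing result Proposition \ref{pr:fast} (which rests on Morris' spectral gap and the partial-exchangeability entropy argument, not merely on Propositions \ref{pr:chaos}--\ref{pr:fluid}) forces its empirical profile at time $n\varepsilon$ to be close to $\cG\bigl(\tfrac m n-\sum_{i\le L}\tfrac{\eta_i}{n}\bigr)$; transferring back through the coupling and shifting by the Markov property gives the estimate at every macroscopic time, with the \emph{random} solid heights on the right-hand side --- which is the form actually needed before any convergence of $\eta_i(nt)/n$ is known. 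No bootstrap or relaxation rate is required.

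Two secondary points also need repair. First, you invoke ``Lipschitz uniqueness'' for (\ref{fluid:dense}), but the paper explicitly notes this Cauchy problem is degenerate (the number of active coordinates $r(t)$ may blow up at $t=0$ when infinitely many $u_i$ are positive); uniqueness requires the argument of Lemma \ref{lm:uniqueness}, which exploits that each $\bu_i$ is forced to vanish after time $(1+\rho)u_i$ so that $\Delta(t)\le\int_0^t r(s)\Delta(s)\,\mathrm{d}s$ with $r$ integrable. Second, your treatment of the positive part is hand-waved: asserting that past its dissolution time a site ``behaves as a typical liquid site with maximum occupancy $O(\log n)$'' is neither proved nor needed; the paper instead keeps the local-time term $\int_0^t{\bf 1}_{(U_i^n(s)=0)}\,\mathrm{d}s$, bounds it by $\frac{1}{u_L(t)}\int_0^t\Delta_n(s)\,\mathrm{d}s$ as in (\ref{localtime}), runs Gr\"onwall once to get convergence strictly before $t_L$, and then bootstraps to remove the $1/u_L$ factor; finally the reduction from infinitely many indices to finitely many solid sites at positive times uses Proposition \ref{pr:uniform} via (\ref{key:uniform}), a step your tightness paragraph glosses over.
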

Note that the Cauchy problem (\ref{fluid:dense}) is slightly degenerate, since the usual Lipschitz condition does not apply. We start by verifying that there is a unique solution  $\bu$ to this problem, and then show that the latter does indeed describe the evolution of the solid phase. In addition, we compute the time at which this solution vanishes, and find that it is precisely the right-hand side of (\ref{eq:main}). When combined with Proposition \ref{pr:fast}, this observation easily leads to the proof of Theorem \ref{th:main}.
\subsection{Related works}

The zero-range process has a long history. In the classical setting, the particles evolve on an infinite transitive graph like the lattice $\dZ^d$, and the description of the set of stationary laws constitutes by itself an important question. More recently, hydrodynamic limits and complex phenomena such as metastability and  condensation have received a considerable attention in both the mathematical and physical communities. The works are too numerous to be all cited, and  we refer the interested reader to the comprehensive survey \cite{MR2145800} and the references therein. 

Results addressing the rate of convergence to equilibrium of the zero-range process on finite graphs  are more limited. In \cite{MR2322692}, Caputo and Posta estimate the entropy dissipation constant on the complete graph in the condensation-free regime where the jump rate grows roughly linearly with the number of particles on the site. More directly related to our setting is an important work of Morris \cite{MR2271475}, in which the spectral gap of the constant-rate zero-range process is estimated on the complete graph and the $d-$dimensional torus. While the spectral gap provides general bounds on the mixing times, these are usually too crude to get the precise prefactor and establish cutoff. Nevertheless, the result of Morris plays an important role in our proof of fast mixing in the absence of a solid phase, see Section \ref{sec:fast}. Another important inspiration for the present work is a paper of Graham \cite{MR2521877} concerning the asymptotic behavior of  $(Q(t)\colon t\ge 0)$ in the special case where the initial configuration $\eta$ is constant. Our propositions \ref{pr:chaos} and \ref{pr:fluid} extend these results to arbitrary initial conditions. As explained above, this fluid limit only provides a very rough description of the system and much more work is needed in order to control the total variation distance to equilibrium. 

The first occurrences of a cutoff phenomenon were discovered in the 80's by Aldous, Diaconis and Shahshahani \cite{MR626813,MR770418,MR841111} in the case of card shuffling. Since then, other instances have been found in a variety of contexts and, notably, interacting particle systems. Three emblematic examples are the stochastic Ising model (on the complete graph  \cite{MR2550363}, the lattice \cite{MR3020173} and other topologies \cite{MR3193965}), the East process \cite{MR3320314}, and the exclusion process (on the complete graph \cite{MR2869447}, the line \cite{MR3474475}, and the cycle \cite{MR3551201,MR3689972}). Interestingly, the proof of cutoff for the exclusion process on the cycle implies that of the zero-range process on the cycle, via a well-known bijection \cite{EVANS2000}. To the best of our knowledge, the cycle is the only graph on which the zero-range process has been shown to exhibit cutoff. Extending this to the $d-$dimensional torus for $d\ge 2$ seems to constitute a natural and challenging problem. More generally, the question of characterizing the Markov chains that exhibit cutoff has attracted much attention over the past three decades, but remains unsolved.

\section{Metastability of the liquid phase}
\label{sec:liquid}

Before we establish Propositions \ref{pr:chaos} and \ref{pr:fluid}, let us briefly prove the statement (\ref{equilibrium}) for completeness. Let $\cN(n,m)$ denote the number of ways to place $m$ indistinguishable particles into $n$ sites:
\begin{eqnarray}
\label{binomial}
\cN(n,m) & := & { m+n-1 \choose n-1 }.
\end{eqnarray}
If $\xi$ is uniformly distributed on $\Omega$, we have for each $k\in\dZ_+$, 
\begin{eqnarray*}
\PP\left(\xi_1=k\right) \ = \ \frac{\cN(n-1,m-k)}{\cN(n,m)} & \textrm{ and } & 
\PP\left(\xi_1=k,\xi_2=k\right) \ = \ \frac{\cN(n-2,m-2k)}{\cN(n,m)}.
\end{eqnarray*} 
In the regime (\ref{assume:sparse}), these ratios tend to $\cG_k(\rho)$ and $(\cG_k(\rho))^2$, respectively. Thanks to the exchangeability of $(\xi_1,\ldots,\xi_n)$, this easily implies that 
\begin{eqnarray*}
Q_k & \xrightarrow[n\to\infty]{L^2} & \cG_k(\rho).
\end{eqnarray*}
Since $k\in\dZ_+$ is arbitrary, the claim follows.
\subsection{Propagation of chaos}
In this section, we establish Proposition \ref{pr:chaos}. We rely on the standard theory of hydrodynamic limits for Markov processes, a widely studied topic which is discussed at length in the comprehensive book of Ethier and Kurtz \cite{MR838085}. Define a map $F\colon\ell^1(\dZ_+)\to \ell^1(\dZ_+)$ by the formula
\begin{eqnarray*}
F_k(q) & = & q_{k+1}-q_k{\bf 1}_{(k\ge 1)}-\left(\sum_{\ell\ge 1}q_\ell\right)\left(q_k-q_{k-1}{\bf 1}_{(k\ge 1)}\right).
\end{eqnarray*}
This map is locally Lipschitz continuous: for any $q,q'\in \ell^1(\dZ_+)$, 
\begin{eqnarray}
\label{lipschitz}
\|F(q)-F(q')\| & \le & 2\left(1+\|q\|+\|q'\|\right)\|q-q'\|.
\end{eqnarray}
Consequently, for each $q\in \ell^1(\dZ_+)$, the Picard–Lindelöf Theorem ensures existence and uniqueness of a maximal $\ell^1(\dZ_+)-$valued solution $\left(\qq(t)\colon t\in[0,T_*)\right)$ to the  Cauchy problem
\begin{eqnarray}
\label{cauchy}
\qq(t) & = & q+\int_0^tF(\qq(s))ds.
\end{eqnarray}
Note, however, that the horizon $T_\star$ needs not a priori be infinite, as we have not yet ruled out the possibility that $\|\qq(t)\|$ explodes in finite time. Let us now show that the empirical distribution of the system satisfies an approximate version of (\ref{cauchy}). Thanks to our mean-field setting, the projected process $(Q(t)\colon t\ge 0)$ is again a Markov process on (a finite part of) $\cP(\dZ_+)$, with jumps
\begin{eqnarray}
\label{transitions:Q}
q & \mapsto & q+\frac{1}{n}\left(\delta^{\ell+1}+\delta^{k-1}-\delta^\ell-\delta^k\right)
\end{eqnarray}
occurring at rate ${\bf 1}_{(k\ge 1)}q_k\left(nq_\ell-{\bf 1}_{(\ell=k)}\right)$, for each $(k,\ell)\in\dZ_+^2$. The infinitesimal drift $D\colon \ell^1(\dZ_+)\to\ell^1(\dZ_+)$ can thus be decomposed as 
\begin{eqnarray}
\label{drift}
D  & = & F +\frac{1}{n}R ,
\end{eqnarray}
with $
R_k(q) =  2q_k{\bf 1}_{(k\ge 1)}-q_{k-1}{\bf 1}_{(k\ge 2)}-q_{k+1}.
$
By Dynkin's formula, the compensated process
\begin{eqnarray*}
M(t) & := & Q(t)-Q(0) -\int_0^t D\left(Q(s)\right)ds
\end{eqnarray*}
is a $\ell^1(\dZ_+)-$valued martingale.
Comparing with (\ref{cauchy}) and using (\ref{lipschitz})-(\ref{drift}), we easily obtain 
\begin{eqnarray*}
\left\|Q(t)-\qq(t)\right\| & \le & \varepsilon(t) + 2\int_0^t(2+\|\qq(s)\|) \left\|Q(s)-\qq(s)\right\| ds,
\end{eqnarray*}
for all $t<T_\star$, where we have set 
\begin{eqnarray*}
\varepsilon(t) & := & \left\|Q(0)-q\right\|  + \frac{1}{n}\left\|\int_0^tR\left(Q(s)\right)ds\right\|+ \left\|M(t)\right\|.
\end{eqnarray*}
We may now fix $0\le T < T_\star$ and apply Gr\"onwall's Lemma to obtain
\begin{eqnarray*}
\sup_{t\in[0,T]}\left\|Q(t)-\qq(t)\right\| & \le & \left(\sup_{t\in[0,T]} \varepsilon(t)\right)\exp\left\{2\int_0^T(2+\|\qq(s)\|) ds\right\}.
\end{eqnarray*}
In order to establish the claim for $T<T_\star$, it therefore suffices to show that 
\begin{eqnarray}
\label{liquid:toshow}
\sup_{t\in[0,T]} \varepsilon(t) & \xrightarrow[n\to\infty]{\PP} & 0.
\end{eqnarray}
This will also guarantee that $\qq(t) \in \cP(\dZ_+)$ for all $t\in[0,T_\star)$, thereby ruling out the possibility that $\|\qq(t)\|$ explodes in finite time. We will thus have $T_\star  =  \infty$, and the proof will be complete. To prove (\ref{liquid:toshow}), we treat each term appearing in the definition of $\varepsilon(t)$ separately. The first one vanishes by (\ref{assume:empirical}). For the second, we observe that $\|R(q)\|\le 4\|q\|$ for all $q\in\ell^1(\dZ_+)$, so that
\begin{eqnarray*}
\sup_{t\in[0,T]}\frac{1}{n}\left\|\int_0^tR\left(Q(s)\right)ds\right\| & \le & \frac{4T}{n}.
\end{eqnarray*}
Finally, for the martingale term, we note that the $k$th coordinate $M_k$ is a continuous-time martingale with jumps of size at most $\frac{2}{n}$ occurring at rate at most $n\left(2Q_k(t)+Q_{k-1}(t)+Q_{k+1}(t)\right)dt$. Thus,
\begin{eqnarray*}
\EE\left[\left|M_k(T)\right|\right] & \le & 2\int_0^T\EE\left[2Q_k(t)+{\bf 1}_{(k\ge 1)}Q_{k-1}(t)+Q_{k+1}(t)\right]dt.\\
\EE\left[\left(M_k(T)\right)^2\right] & \le & \frac{4}{n}\int_0^T\EE\left[2Q_k(t)+{\bf 1}_{(k\ge 1)}Q_{k-1}(t)+Q_{k+1}(t)\right]dt.
\end{eqnarray*}
Since $\sum_k kQ_k(t)=\frac{m}{n}$, we deduce that in the regime (\ref{assume:sparse}),
\begin{eqnarray*}
\sum_{k=1}^\infty (k+1)\, \EE[|M_k(T)|] \ = \ \cO(1) & \textrm{ and } & \sum_{k=1}^\infty (k+1)\, \EE\left[\left(M_k(T)\right)^2\right] \ = \ \cO\left(\frac 1n\right). 
\end{eqnarray*}
This is more than enough to imply $\EE\left[\left\|M(T)\right\|\right]\to 0$. The convergence $\sup_{t\in[0,T]}\|M(t)\|\xrightarrow[]{\PP} 0$ then follows from Doob's maximal inequality applied to the sub-martingale $\left(\|M(t)\|\right)_{t\ge 0}$.

\subsection{Probabilistic representation of the fluid limit}
We now turn to the analysis of the fluid limit $(\qq(t)\colon t\ge 0)$. The latter trivializes in the degenerate case $\lambda=0$, and we will henceforth assume that $\lambda>0$. Let  $\Xi^+,\Xi^-$ be two independent Poisson point processes with unit intensity on $\dR_+$, and define a process $\bZ:=(Z(t)\colon t\ge 0)$ by the formula
\begin{eqnarray}
\label{def:w}
Z(t) & := & \Xi^+\left(\int_0^t(1-\qq_0(s))ds\right)-\Xi^-\left(t\right).
\end{eqnarray}
Now, let $X(0)$ be a $q-$distributed variable independent of $\Xi^\pm$, and consider the reflected process
\begin{eqnarray}
\label{def:z}
X(t) & := & \left(X(0)+Z(t)\right)\vee \max_{s\in[0,t]}\left(Z(t)-Z(s)\right).
\end{eqnarray}
In words, $\bX=\left(X(t)\colon t\ge 0\right)$ is a time-inhomogeneous birth-and-death process with initial law $q$, upward rate $1-\qq_0(t)$ and downward rate $1$. Comparing the associated Kolmogorov equations with (\ref{cauchy}), we see that $\bX$ ``represents'' our fluid limit $(\qq(t)\colon t\ge 0)$ in the sense that
 \begin{eqnarray}
\label{representation}
\qq_k(t) & = & \PP\left(X(t)=k\right),
\end{eqnarray}
for all $k\in\dZ_+$ and all $t\in\dR_+$. Note in particular that $1-\qq_0(t)=\PP(X(t)>0)$, so that $\bX$ can be autonomously described as a time-inhomogeneous birth-and-death process with downward rate $1$ and upward rate $\PP(X(t)>0)$. We now enumerate a few consequences of this representation. 
\begin{lemma}[Mixing for the fluid]\label{lm:mixing} $X(t)$ is asymptotically independent of $X(0)$, i.e. for all $k,\ell\ge 0$,
\begin{eqnarray*}
\PP\left(X(0)=k,X(t)=\ell\right)-\PP\left(X(0)=k\right)\PP\left(X(t)=\ell\right) & \xrightarrow[t\to\infty]{} & 0.
\end{eqnarray*}
\end{lemma}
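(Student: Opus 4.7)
The plan is to prove this via a coupling argument built on the representation \eqref{def:z}. I will introduce an independent copy $\tilde X(0)$ of $X(0)$, itself independent of $\Xi^\pm$, and set
\[
\tilde X(t) := \bigl(\tilde X(0) + Z(t)\bigr) \vee \max_{s \in [0,t]}\bigl(Z(t) - Z(s)\bigr),
\]
so that $\tilde X$ is driven by the same Poisson processes as $X$. Letting $m(t) := \min_{s\in[0,t]} Z(s) \le 0$, I can rewrite both reflected representations as
\[
X(t) = Z(t) + \bigl(X(0) \vee (-m(t))\bigr), \qquad \tilde X(t) = Z(t) + \bigl(\tilde X(0) \vee (-m(t))\bigr),
\]
and in particular $X(t) = \tilde X(t) = Z(t) - m(t)$ on the event $B(t) := \{-m(t) \ge X(0) \vee \tilde X(0)\}$. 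Since $X(0)$ is independent of $\tilde X(t)$ (which has the same marginal law as $X(t)$), a standard coupling estimate will then yield
\[
\bigl|\PP(X(0)=k,\,X(t)=\ell) - \PP(X(0)=k)\,\PP(X(t)=\ell)\bigr| \;\le\; 2\,\PP\bigl(B(t)^c\bigr),
\]
reducing the lemma to the statement $\PP(B(t)^c) \to 0$.

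Since $-m(t)$ is non-decreasing in $t$, the latter will follow from $\inf_{s \ge 0} Z(s) = -\infty$ almost surely. To establish this I will exploit the fact that $Z(t) = \Xi^+(\sigma^+(t)) - \Xi^-(t)$ with $\sigma^+(t) := \int_0^t(1-\qq_0(s))ds$ is the difference of two independent Poisson variables, so
\[
\EE[Z(t)] = -\int_0^t \qq_0(s)\,ds \le 0, \qquad \VV(Z(t)) = \sigma^+(t) + t \ge t \;\xrightarrow[t\to\infty]{}\;\infty.
\]
Applying the CLT to the standardized variable $(Z(t)-\EE[Z(t)])/\sqrt{\VV(Z(t))}$ and using that $-\EE[Z(t)] \ge 0$ will force $\liminf_{t\to\infty}\PP(Z(t) \le -K) \ge \tfrac12$ for every fixed $K > 0$, whence $\PP(\inf_{s\ge 0}Z(s) \le -K) \ge 1/2$. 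Finally, the event $\{\inf_s Z(s) = -\infty\}$ belongs to the tail $\sigma$-field of $(\Xi^+,\Xi^-)$, since modifying the Poisson processes on any compact initial interval only shifts $Z(s)$ by a bounded integer for $s$ large; Kolmogorov's zero-one law will therefore upgrade the lower bound $1/2$ to a full probability of $1$.

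The main obstacle I anticipate lies precisely in this last step: the rates of $X$ depend on the unknown function $\qq_0(\cdot)$, about which I have no quantitative control at this stage (identifying its limit is exactly the content of Proposition~\ref{pr:fluid}, which has not yet been proved). Passing from $X$ to $Z$ sidesteps this circularity, because the crucial variance bound $\VV(Z(t)) \ge t$ relies only on the trivial estimate $\qq_0 \le 1$; the rest of the argument then runs on purely structural grounds via the CLT and Kolmogorov's zero-one law, without requiring any further information on $\qq_0$.
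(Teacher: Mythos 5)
Your proposal is correct and takes essentially the same route as the paper: both exploit the reflection representation to couple $X$ with a version whose initial condition is forgotten as soon as the common driving walk $Z$ has descended below the relevant initial height(s), reducing the lemma to the almost-sure unboundedness below of $Z$. The only real difference is the final step: you prove $\inf_{s\ge 0}Z(s)=-\infty$ via a CLT for the Skellam variable plus Kolmogorov's zero-one law (which works, since the event is invariant under modifications of $\Xi^\pm$ on compact initial intervals), whereas the paper gets the same conclusion more cheaply by noting that $Z$ has upward rate at most $1$ and downward rate $1$, hence is dominated by a recurrent symmetric walk and hits every level $-k$ in finite time.
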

\begin{proof}
For each $k\in\dZ_+$, define a process $\bX^k$ by
\begin{eqnarray*}
X^k(t) & := & \left(k+Z(t)\right)\vee \max_{s\in[0,t]}\left(Z(t)-Z(s)\right),
\end{eqnarray*}
so that $\bX^k$ coincide with $\bX$ on the event $\{X(0)=k\}$. Since $X(0)$ is independent of $\bZ$, we have
\begin{eqnarray*}
\PP\left(X(0)=k,X(t)=\ell\right) & = & \PP(X(0)=k)\PP(X^k(t)=\ell).
\end{eqnarray*}
By construction, we have $X^k(t)=X^0(t)$ for all $t\ge 
T_k  :=  \inf\{t\ge 0\colon Z(t)=-k\}$. Consequently, 
\begin{eqnarray*}
\left |\PP\left(X^k(t)=\ell\right)-\PP\left(X(t)=\ell\right)\right | & \le & \sum_{i=0}^\infty\PP\left(X(0)=i\right)\PP\left(T_{i\vee k}\ge t\right).
\end{eqnarray*}
The conclusion now follows by letting $t\to\infty$ and observing that the $T_k's$ are almost-surely finite, since $\bZ$ has upward rate at most $1$ and downward rate $1$.
\end{proof}

\begin{lemma}[Conservation of mass]\label{lm:mass} 
$\EE[X(t)] = \lambda$ for all $t\ge 0$. 
\end{lemma}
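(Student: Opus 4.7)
The plan is to compute $\EE[X(t)]$ directly from the probabilistic representation (\ref{representation}), exploiting a cancellation between the upward and the effective downward rates of $\bX$.

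To set it up, I would note that $\bX$ is a time-inhomogeneous birth-and-death chain on $\dZ_+$ with upward rate $1-\qq_0(t)$ and downward rate ${\bf 1}_{(X(t)>0)}$ (the indicator encoding reflection at the origin). From (\ref{def:w})--(\ref{def:z}) one reads off the pointwise bound $X(t)\le X(0)+\Xi^+([0,t])$, hence the integrability estimate $\EE[X(t)]\le\lambda+t<\infty$ for every $t\ge 0$.

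The heart of the proof is Dynkin's formula applied to the identity function: the process
\begin{eqnarray*}
M(t) & := & X(t)-X(0)-\int_0^t\bigl[\bigl(1-\qq_0(s)\bigr)-{\bf 1}_{(X(s)>0)}\bigr]\,ds
\end{eqnarray*}
is a martingale, uniformly integrable on each bounded interval thanks to the Poisson bound above. Taking expectations and invoking (\ref{representation}) at $k=0$, i.e.\ $\qq_0(s)=\PP(X(s)=0)$, makes the integrand vanish identically, so $\EE[X(t)]=\EE[X(0)]=\sum_{k\ge 1}kq_k=\lambda$ by the definition (\ref{def:lambda}) of $\lambda$.

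There is no real obstacle here; the statement is essentially a balance equation, and the only (mild) care required is the integrability needed to bring the expectation inside Dynkin's formula. An equivalent purely analytic route verifies the cancellation $\sum_{k\ge 1}kF_k(q)=0$ directly from the formula for $F$ by a short index shift on each of its four terms, which is the deterministic counterpart of the same flow-balance argument.
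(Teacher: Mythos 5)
Your proposal is correct and follows essentially the same route as the paper: apply Dynkin's formula to $\psi(x)=x$ for the reflected birth-and-death chain and use the self-consistency $\qq_0(s)=\PP(X(s)=0)$ from (\ref{representation}) so that the drift term $(1-\qq_0(s))-{\bf 1}_{(X(s)>0)}$ vanishes in expectation. The only (immaterial) difference is the justification for handling the unbounded observable: the paper extends from bounded $\psi$ by monotone convergence, while you invoke the pathwise domination $X(t)\le X(0)+\Xi^+([0,t])$.
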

\begin{proof}
For any bounded observable $\psi\colon \dZ_+\to\dR$ and any time $t\ge 0$, Dynkin's formula ensures that 
\begin{eqnarray*}
\EE\left[\psi(X_t)\right] &  = & \EE\left[\psi(X_0)\right] + \int_0^t\EE\left[\left(1-\qq_0(u)\right)\Delta\psi(X_u)-\Delta\psi(X_u-1)\right]du,
\end{eqnarray*}
where $\Delta\psi(-1)=0$ and $\Delta\psi(x)=\psi(x+1)-\psi(x)$ for $x\in\dZ_+$. By monotone convergence, the formula extends to the case $\psi(x)=x$. But then $\Delta\psi(x)={\bf 1}_{(x\ge 0)}$, and the integral vanishes. 
\end{proof}

\begin{lemma}[Lower-bound on void probability]
\label{lm:void}
For each $s>0$, we have
\begin{eqnarray*}
\inf_{t\ge 0}\,\PP\left(X(t+s)=0\right) & > & 0.
\end{eqnarray*} 
\begin{proof}
Let $Y=(Y(u)\colon u\ge 0)$ take the  value $\lfloor 2\lambda\rfloor$ over the time interval $[0,t]$ and then evolve as a simple random walk on $\dZ_+$ from time $t$ onwards (i.e., it jumps up and down at unit rate, except that jumps from $0$ to $-1$ are censored). Since our original process $X$ has the same downward rates and  lower upward rates, we may couple $X$ and $Y$ in such a way that
\begin{enumerate}[(i)]
\item $\left(X(u)\colon u\in[0,t]\right)$ is independent of $Y$; 
\item from time $t$ onwards, the attempts to jump downwards occur at the same times for $X$ and $Y$;
\item from time $t$ onwards, whenever $X$ jumps upwards, so does $Y$.
\end{enumerate} 
Properties (ii)-(iii) guarantee the inclusion
$
\left\{X(t)\le \lfloor 2\lambda\rfloor\right\} \subseteq \left\{X(s+t)\le {Y}(s+t)\right\}$. 
In particular,
\begin{eqnarray*}
\left\{X(s+t)=0\right\}
& \supseteq &
\left\{Y(s+t)=0\right\}\cap\left\{X(t)\le \lfloor 2\lambda\rfloor\right\}. 
\end{eqnarray*}
By (i), the two events on the right-hand side are independent. The first has probability $\kappa_s(\lfloor 2\lambda\rfloor,0)>0$, where $\kappa$ denotes the transition kernel for simple random walk on $\dZ_+$. For the second, we may invoke Markov's inequality and lemma \ref{lm:mass} to write
\begin{eqnarray*}
\PP\left(X(t)\le \lfloor 2\lambda\rfloor\right) & \ge & 1-\frac{\lambda}{\lfloor 2\lambda\rfloor+1}.
\end{eqnarray*}
The right-hand side exceeds $\frac 12$, and we conclude that $\PP\left(X(t+s)=0\right)\ge \frac 12\kappa_s(\lfloor 2\lambda\rfloor,0)$.   
\end{proof}
\end{lemma}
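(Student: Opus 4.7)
The plan is to combine two ingredients: a uniform-in-$t$ tail bound on $X(t)$ coming from conservation of mass, and a monotone coupling with a simple random walk showing that from any bounded state the process can reach $0$ within time $s$ with positive probability.

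For the first ingredient, Markov's inequality together with Lemma \ref{lm:mass} gives $\PP(X(t) \le M) \ge 1 - \lambda/(M+1)$, uniformly in $t$. Choosing any integer $M \ge 2\lambda$ keeps the right-hand side at least $\tfrac{1}{2}$, so the process is bounded by the deterministic constant $M$ with probability at least $\tfrac{1}{2}$, regardless of $t$.

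For the second ingredient, observe that $X$ is a time-inhomogeneous birth-and-death chain on $\dZ_+$ with downward rate exactly $1$ and upward rate $1-\qq_0(u) \le 1$. I would introduce a simple symmetric random walk $Y$ on $\dZ_+$ (reflected at $0$, with unit rates in both directions) starting at $Y(t) = M$, and couple it to $X$ on $[t,\infty)$ by sharing the downward Poisson clock and driving the upward jumps of $X$ by a thinning of $Y$'s upward clock. The Poisson clocks used after time $t$ should be built from randomness independent of the history of $X$ up to time $t$. On the event $\{X(t) \le M\}$, this coupling enforces $X(u) \le Y(u)$ for all $u \ge t$, and in particular $\{Y(t+s) = 0\} \subseteq \{X(t+s) = 0\}$.

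Combining the two pieces yields
\[
\PP\left(X(t+s) = 0\right) \ge \tfrac{1}{2}\, \kappa_s(M, 0),
\]
where $\kappa_s(M,0) > 0$ denotes the transition probability of reflected simple random walk from $M$ to $0$ over time $s$ (strictly positive because, e.g., the walk can decrement $M$ times before any upward attempt with positive probability). Since neither $M$ nor $\kappa_s(M,0)$ depends on $t$, the infimum over $t$ is bounded below by this quantity and the claim follows. The main conceptual subtlety is setting the coupling up cleanly despite the time-inhomogeneity of the upward rate $1 - \qq_0(u)$, but this is routine given the Poisson representation (\ref{def:w})--(\ref{def:z}) already introduced.
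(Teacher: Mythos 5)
Your proposal is correct and follows essentially the same route as the paper: a uniform-in-$t$ bound $\PP(X(t)\le M)\ge\tfrac12$ via Markov's inequality and Lemma \ref{lm:mass}, combined with a monotone coupling of $X$ (after time $t$, using clocks independent of the past) with a reflected simple random walk started at $M$, yielding $\PP(X(t+s)=0)\ge\tfrac12\kappa_s(M,0)$. This matches the paper's argument in both structure and detail.
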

\begin{lemma}[Uniform integrability]\label{lm:UI}
The process $\bX$ is uniformly integrable.
\end{lemma}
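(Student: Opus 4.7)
My plan is to reduce the question to controlling a version of $X$ started from $0$ via a monotone coupling, and then to establish a uniform exponential-moment bound on this reduced process through an exponential Lyapunov function, using Lemma~\ref{lm:void} to secure the necessary negative drift.

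First, I would introduce, on the same probability space as $X$, the birth-and-death process $X^0$ with the same time-dependent upward rate $1-\qq_0(t)$ and constant downward rate $1$, but started at $X^0(0)=0$. Driving $X$ and $X^0$ with the common Poisson clocks $\Xi^+,\Xi^-$ realizes a monotone coupling in which $X(t)\ge X^0(t)$ for every $t\ge 0$, and in which the gap $X(t)-X^0(t)$ can only decrease (it drops by $1$ exactly when a down-jump attempt occurs while $X^0$ sits at $0$ but $X$ does not). Consequently,
\[
0\;\le\; X(t)-X^0(t) \;\le\; X(0),\qquad t\ge 0.
\]
Since $\EE[X(0)]=\lambda<\infty$, the single random variable $X(0)$ is trivially uniformly integrable; as the sum of two UI families is UI, it then suffices to prove that $\{X^0(t):t\ge 0\}$ is UI.

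To that end, I would actually establish the stronger statement that $\sup_{t\ge 0}\EE[\alpha^{X^0(t)}]<\infty$ for some $\alpha>1$. The crude bound $X^0(t)\le\Xi^+(t)$ (every upward jump of $X^0$ corresponds to an atom of $\Xi^+$) handles $t\in[0,1]$ and, crucially, guarantees that $\EE[\alpha^{X^0(1)}]\le e^{\alpha-1}$ is finite for every $\alpha>1$. For $t\ge 1$, Lemma~\ref{lm:void} applied with $s=1$ yields a constant $c>0$ such that $\qq_0(t)\ge c$, hence $1-\qq_0(t)\le 1-c<1$. Choosing any $\alpha\in(1,1/(1-c))$ and setting $V(x)=\alpha^x$, a straightforward generator computation shows that the drift condition
\[
(\cL_t V)(x)\;\le\;-\epsilon\,V(x)+C\,\mathbf{1}_{\{x=0\}}, \qquad t\ge 1,
\]
holds for suitable $\epsilon,C>0$ depending only on $\alpha$ and $c$. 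Dynkin's formula and Grönwall's lemma, launched at time $t=1$ from the finite initial value $\EE[V(X^0(1))]$, then deliver the claimed uniform bound.

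The main technical obstacle is this last step: the limiting initial law $q$ is only assumed to have finite mean and may carry very heavy tails, so one cannot hope for any exponential moment control at time $0$. The regularization must come entirely from the dynamics, which is precisely why it is essential both to start the auxiliary process $X^0$ at $0$ and to launch the Lyapunov iteration from $t=1$ rather than $t=0$: by that time the elementary domination $X^0(1)\le\Xi^+(1)$ has already produced a Poisson (hence exponentially integrable) envelope, furnishing the finite initial value required to close the Grönwall estimate.
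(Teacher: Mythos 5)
Your argument is correct, and it shares the paper's overall strategy: a Poisson envelope on $[0,1]$ and Lemma \ref{lm:void} to make the upward rate subcritical (at most $1-c<1$) from time $1$ onwards, followed by a domination argument that isolates the possibly heavy-tailed initial condition. The implementation differs in two places. First, your decomposition: you peel off $X(0)$ at time $0$ through the monotone coupling $X(t)\le X(0)+X^0(t)$, where $X^0$ is the same time-inhomogeneous chain started from $0$ (indeed, with the reflection formula (\ref{def:z}) one has $X^0(t)=\max_{s\in[0,t]}\left(Z(t)-Z(s)\right)$ and $X(t)-X^0(t)\le X(0)$, exactly as you claim), whereas the paper splits at time $1$, writing $X(t)\le X(1)+Y(t)$ with $Y$ a \emph{homogeneous} birth-and-death chain of rates $1-\varepsilon$ and $1$ started at $0$. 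Second, the uniform control: you run a Foster--Lyapunov argument with $V(x)=\alpha^x$, $\alpha\in\left(1,\frac{1}{1-c}\right)$, to get $\sup_{t\ge 0}\EE\left[\alpha^{X^0(t)}\right]<\infty$, while the paper simply dominates $Y(t)$ stochastically by its stationary geometric law. The paper's route is a bit shorter; yours is quantitatively stronger, yielding uniform exponential moments for the dynamical part of the process (the UI of the full family still being limited by the first-moment assumption on $X(0)$, as you note). One small point to tighten: applying Dynkin's formula and the differential inequality with the unbounded function $V$ requires a word of justification (truncate $V$ or stop at level $N$ and pass to the limit), which is routine here because the pathwise bound $X^0(t)\le \Xi^+(t)$ guarantees $\EE\left[\alpha^{X^0(t)}\right]<\infty$ for each fixed $t$.
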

\begin{proof} 
On $[0,1]$, the representation (\ref{def:w}-\ref{def:z}) immediately yields the domination
\begin{eqnarray*}
\max\left\{X(t)\colon t\in[0,1]\right\} & \le & X(0)+\Xi^+(1),
\end{eqnarray*}
and the right-hand side has finite mean. On the other hand, on $[1,\infty)$, Lemma \ref{lm:void} guarantees that the upward jump rate $1-\qq_0$ is less than $1-\varepsilon$ for some $\varepsilon>0$. Consequently, we can couple $(X(t)\colon t\ge 1)$ with an homogeneous birth-and-death process $\bY=\left(Y(t)\colon t\ge 1\right)$ starting at zero and jumping up at rate $1-\varepsilon$ and down at rate $1$, in such a way  that
\begin{eqnarray*}
\forall t\in[1,\infty),\qquad X(t) & \le & X(1)+Y(t)
\end{eqnarray*}
Surely, starting $\bY$ from its stationary law $\cG(\frac{1}{\varepsilon})$ instead of $0$ can only make it larger, and hence $Y(t)$ is stochastically dominated by $\cG(\frac{1}{\varepsilon})$. In conclusion, $X(t)$ is stochastically dominated by the sum of three integrable  variables whose laws do not depend on $t$, and the claim is proved. 
\end{proof}
\subsection{Entropic relaxation}
Entropy will play a crucial role, see \cite{MR2239987} for an account. Recall that the \emph{entropy} of $p\in\cP(\dZ_+)$ is  
\begin{eqnarray*}
H(p) & := & \sum_{k=0}^\infty p_k\log\frac{1}{p_k} \ \in \ [0,\infty],
\end{eqnarray*}
with the convention $0\log \frac 10=0$, and where $\log$ denotes the natural logarithm. In particular,
\begin{eqnarray*}
H\left(\cG(\lambda)\right) & = & (1+\lambda)\log(1+\lambda) - \lambda\log {\lambda}.
\end{eqnarray*}
In fact, $\cG(\lambda)$ achieves the maximum entropy over all laws $p\in\cP(\dZ_+)$ with mean $\lambda$. Indeed, using the fact that $\log\left(\cG_k(\lambda)\right)$ is an affine function of $k$, it is straightforward to check that
\begin{eqnarray}
\label{entropymax}
H\left(\cG(\lambda)\right) & = & H(p)+D_{\textsc{kl}}\left(p\,\|\,\cG(\lambda)\right),
\end{eqnarray}
where $D_{\textsc{kl}}(p\,\|\,q)$ is the Kullback-Leibler divergence of $p$ w.r.t. a fully-supported law $q\in\cP(\dZ_+)$:
\begin{eqnarray}
\label{dkl}
D_{\textsc{kl}}(p\,\|\,q) & := & \sum_{k=0}^\infty q_k\,\phi\left(\frac{p_k}{q_k}\right),\qquad\textrm{with}\qquad \phi(u)=u\log u-(u-1)\ge 0.
\end{eqnarray}
Note that by strict convexity of $\phi$, we have $D_{\textsc{kl}}(p\,\|\,q)> 0$ unless $p=q$. Now, given a fully-supported law $p$ on $\dZ_+$, we define a quantity $V(p)\in[0,\infty]$ by
\begin{eqnarray*}
V(p) & := & (1-p_0)\left(D_{\textsc{kl}}(p\,\|\,\widehat{p})+D_{\textsc{kl}}(\widehat{p}\,\|\,p)\right),
\end{eqnarray*}
where the (fully-supported) law $\widehat{p}\in\cP(\dZ_+)$ is defined as follows: for all $k\in\dZ_+$,
\begin{eqnarray}
\widehat{p}_{k} & := & \frac{p_{k+1}}{1-p_0}.
\end{eqnarray}
Note that the geometric distributions are characterized by the \emph{memoryless property} $\widehat{p}=p$. In particular, $V(p)=0$ if only if $p$ is geometric, and this quantity may thus be viewed as measuring how far $p$ is from being geometric. The essence of Proposition \ref{pr:fluid} lies in the following  identity.  
\begin{lemma}[Entropy production]\label{lm:entropy} For all $t\ge 0$, we have
\begin{eqnarray*}
H\left(\qq(t)\right) & = & H\left(q\right)+\int_0^t V(\qq(u))\, du.
\end{eqnarray*}
\end{lemma}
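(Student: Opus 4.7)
The plan is a direct computation: differentiate $t\mapsto H(\qq(t))$ using the ODE (\ref{def:fluid}), reorganise the resulting sums by shifting summation indices, and recognise the outcome as $V(\qq(t))$. Integration in time then gives the lemma.

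To justify exchanging $\frac{d}{dt}$ with the infinite summation, I first observe that by Lemma \ref{lm:mass} the mean of $\qq(t)$ equals $\lambda$ for all $t\ge 0$, so the maximum-entropy identity (\ref{entropymax}) yields $H(\qq(t))\le H(\cG(\lambda))<\infty$ uniformly in $t$. Using the conservation $\sum_k\dot\qq_k=0$ and setting $r:=1-\qq_0$, I substitute the ODE into $-\sum_k\dot\qq_k\log\qq_k$, split into two sums, and relabel ($k\mapsto k-1$ in one, $k\mapsto k+1$ in the other) to obtain
\begin{eqnarray*}
\frac{d}{dt}H(\qq(t)) & = & \sum_{k\ge 1}\qq_k\log\frac{\qq_k}{\qq_{k-1}}\ +\ r\sum_{k\ge 0}\qq_k\log\frac{\qq_k}{\qq_{k+1}}.
\end{eqnarray*}
Independently, expanding the two Kullback--Leibler divergences appearing in $V(\qq)$ through the identity $\widehat\qq_k=\qq_{k+1}/r$, the $+\log r$ contribution of $D_{\textsc{kl}}(\qq\,\|\,\widehat\qq)$ cancels the $-\log r$ contribution of $D_{\textsc{kl}}(\widehat\qq\,\|\,\qq)$, and the surviving terms reproduce exactly the right-hand side above. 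Integrating on $[0,t]$ then yields the claim.

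The main subtlety I anticipate concerns the support of $\qq(t)$, since $V(\qq(t))$ is only defined when $\qq(t)$ is fully supported on $\dZ_+$. If $\lambda=0$ then $q=\delta_0$ is a stationary point and the lemma reduces to $0=0$, so I may assume $\lambda>0$, which guarantees $r(t)>0$ for every $t\ge 0$. Under this assumption, the probabilistic representation (\ref{representation}) describes $\qq_k(t)$ as the law of a birth-and-death chain with strictly positive upward and downward rates, which reaches every state with positive probability in arbitrarily small positive time; thus $\qq(t)$ is fully supported for every $t>0$, and the integrated identity extends down to $t=0$ by continuity.
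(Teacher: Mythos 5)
Your formal computation is the right one (and it coincides with the algebra underlying the paper's proof), but the step you flag as the ``main subtlety'' is not where the real difficulty lies, and the justification you offer for the step that is genuinely delicate does not work. You propose to differentiate $H(\qq(t))=\sum_k \qq_k\log\frac{1}{\qq_k}$ term by term and to regroup the resulting series, justifying this by the uniform bound $H(\qq(t))\le H(\cG(\lambda))<\infty$. Uniform boundedness of the sum does not license exchanging $\frac{d}{dt}$ with the infinite summation, nor does it guarantee that the rearranged series you write down converge absolutely: the problematic quantities are tail terms of the form $\qq_{K+1}\log\frac{1}{\qq_K}$, which are not controlled by the entropy of $\qq(t)$ (one can have $\qq_K$ extremely small while $\qq_{K+1}$ is only geometrically small, making such terms large). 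Relatedly, your regrouping tacitly assumes that $V(\qq(u))$ is finite and locally integrable, which is precisely part of what has to be proved; the paper explicitly notes that at the outset the integral $\int_0^t V(\qq(u))\,du$ is ``well defined, albeit possibly infinite''.

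The paper's proof is devoted almost entirely to this point and proceeds differently: it works with the truncated sums $\sum_{k=0}^K \qq_k\log\frac{1}{\qq_k}$, writes the exact identity with an explicit boundary term $\varepsilon_K$, and then runs a bootstrap. Using the sign of the term $\qq_{K+1}\log\frac{1}{\qq_K}\ge 0$ it first obtains the inequality $H(\qq(t))\ge H(q)+\int_0^t V(\qq(u))\,du$; finiteness of the left-hand side then forces $\int_0^t V(\qq(u))\,du<\infty$, which in turn (together with the uniform-in-time convergence of $\sum_k\qq_k(u)$, a consequence of Lemma \ref{lm:mass}, and the uniform vanishing of $\qq_{K+1}\log\widehat{\qq}_K$) shows that $\int_0^t\varepsilon_K(u)\,du\to 0$, upgrading the inequality to the claimed equality. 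Some argument of this kind — or an alternative domination of the series of term-derivatives — is needed; as written, your interchange step is a genuine gap. (Your treatment of the support issue and of the degenerate case $\lambda=0$ is fine and matches the paper.)
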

\begin{proof}
Note that $\qq(t)$ has full support as soon as $t>0$, by our probabilistic representation (\ref{representation}) and the fact that the Poisson distribution has full support. Thus, the above integral is well defined, albeit possibly infinite at this stage. Now, the fluid equation (\ref{cauchy}) may be rewritten as follows: 
\begin{eqnarray*}
\frac{d\qq_k}{dt}\  = \ \mm_{k-1}-\mm_k, & \qquad\textrm{ where }\qquad  & \mm_k   :=   (1-\qq_0)\left(\qq_k-\widehat{\qq}_k\right)
\end{eqnarray*}
with the convention that $\mm_{-1}=0$. In particular, for $t\in\dR_+$ and $k\in\dZ_+$, we have
\begin{eqnarray*}
\qq_k(t)\log\frac{1}{\qq_k(t)} & = & q_k\log\frac{1}{q_k} + \int_{0}^t\left(\mm_{k}(u)-\mm_{k-1}(u)\right)\left(1+\log \qq_k(u)\right)du.
\end{eqnarray*}
Summing over $k$ and rearranging, we see that for all $K\ge 1$,
\begin{eqnarray}
\label{fluid:main}
\sum_{k=0}^K \qq_k(t)\log\frac{1}{\qq_k(t)} & = & \sum_{k=0}^K q_k\log\frac{1}{q_k}+\int_{0}^t{\mathfrak v}_K(u)\,du+\int_0^t\varepsilon_K(u)du,
\end{eqnarray}
where we have set
\begin{eqnarray*}
{\mathfrak v}_K & := & \sum_{k=0}^{K-1}\mm_{k}\log \frac{\qq_{k}}{\widehat{\qq}_{k}}  \ = \ (1-\qq_0)\sum_{k=0}^{K-1}\left(\qq_k\,\phi\left(\frac{\qq_k}{\widehat{\qq}_k}\right)+\widehat{\qq}_k\,\phi\left(\frac{\widehat{\qq}_k}{\qq_k}\right)\right),\\
\varepsilon_K & := & \mm_K(1+\log \qq_K)+\left(\sum_{k=0}^{K-1}\mm_k\right)\log\frac{1}{1-\qq_0}.
\end{eqnarray*}
Since ${\mathfrak v}_K\uparrow V(\qq)$ as $K\to\infty$, the claim will readily follow from (\ref{fluid:main}), provided we can show that 
\begin{eqnarray}
\label{fluid:toshow}
\int_0^t\varepsilon_K(u)du & \xrightarrow[K\to\infty]{} & 0,
\end{eqnarray}
which we now do. First, Lemma \ref{lm:mass} ensures that  the series $\sum_k \qq_k$ converges uniformly on $\dR_+$.
Note also that $|\mm_k|\le \qq_k+\qq_{k+1}$ and that $\sum_k\mm_k=0$. From this, it follows that 
\begin{eqnarray*}
\mm_K+(1-\qq_0)\qq_K\log \qq_K+ \left(\sum_{k=0}^{K-1}\mm_k\right)\log\frac{1}{1-\qq_0} 
& \xrightarrow[K\to\infty]{} &  0,
\end{eqnarray*}
uniformly on compact sets. Comparing with the definition of $\varepsilon_K$, we see that as $K\to\infty$,
\begin{eqnarray}
\label{fluid:weak}
\int_0^t\varepsilon_K(u)du & = & \int_0^t\qq_{K+1}(u)\log \frac 1{\qq_K(u)}du + o(1).
\end{eqnarray}
Note that $\qq_{K+1}\log \frac 1{\qq_K}\ge 0$. We may therefore pass to the limit in (\ref{fluid:main}) to obtain the inequality
\begin{eqnarray*}
H(\qq(t)) & \ge & H(q)+\int_0^t V(\qq(u))\, du.
\end{eqnarray*}
In particular, the integral on the right-hand side must be finite. By definition of $V$, this implies 
\begin{eqnarray*}
\int_0^t \qq_{K+1}(u)\log\frac{\widehat{\qq}_{K}(u)}{\qq_K(u)}\, du & \xrightarrow[K\to\infty]{} & 0.
\end{eqnarray*}
In view of (\ref{fluid:weak}) and the uniform convergence ${\qq}_{K+1}\log {\widehat{\qq}}_{K}\to 0$, we now readily obtain (\ref{fluid:toshow}).
\end{proof}

\begin{proof}[Proof of Proposition \ref{pr:fluid}]
By Pinsker's inequality, we have for all $t\ge 0$,
\begin{eqnarray}
\label{pinsker}
\frac 12\|\qq(t)-\cG(\lambda)\|^2 & \le & D_{\textsc{kl}}\left(\qq(t)\,\|\,\cG(\lambda)\right) \ = \ H\left(\cG(\lambda)\right) - H(\qq(t)),
\end{eqnarray}
where the equality follows from Lemma \ref{lm:mass} and the observation (\ref{entropymax}).
Now, the limit 
\begin{eqnarray*}
H_{\infty} & := & \lim_{t\to\infty}\uparrow H(\qq(t))
\end{eqnarray*} exists by Lemma \ref{lm:entropy}, and so our proof boils down to showing that $H_\infty\ge H\left(\cG(\lambda)\right)$. By Fatou's Lemma, it suffices to exhibit a sequence $(t_n)_{n\ge 1}$ along which 
$
\qq(t_n) \to \cG(\lambda).
$
To do so, observe that Lemma \ref{lm:entropy} forces
$
\inf_{t\ge 0}V(\qq(t))  = 0,
$
as otherwise $H(\qq(t))$ would diverge as $t\to\infty$, violating (\ref{pinsker}). We can thus find a sequence of times $(t_n)_{n\ge 1}$ along which
\begin{eqnarray}
\label{V0}
V(\qq(t_n)) & \xrightarrow[n\to\infty]{} & 0.
\end{eqnarray}
On the other hand, by Lemma \ref{lm:UI}, the collection $\left(\qq(t)\colon t\ge 0\right)$ is relatively compact w.r.t. the $1$-Wasserstein metric. We can thus assume (upon further extraction) that $\qq(t_n)\to p$, with $p\in\cP(\dZ_+)$ having mean $\lambda$. It then follows from (\ref{V0}) that  $p=\widehat{p}$, and therefore $p=\cG(\lambda)$, as desired.
\end{proof}

\section{Fast mixing in the absence of a solid phase}
\label{sec:fast}
In this section, we establish the special case $u_1=0$ of Theorem \ref{th:main}, as stated in Proposition \ref{pr:fast}. To do so, we deal with each of the issues enumerated below Proposition \ref{pr:fast}, in order of appearance. 
\subsection{Uniform downward drift}

To deal with issue $(i)$, we show that, starting from any state $\eta$, the  uniform integrability of $Q(t)$ is guaranteed after a time $t=\Theta\left(\max\eta\right)$ only. This is contained in the following Proposition, which  asserts that the number of particles on any non-empty site decreases at a linear rate. The uniformity in $n$ comes from the fact that, in the regime  (\ref{assume:sparse}), the density of particles per site is at most a constant $\rho_\star$ that does not depends on $n$: 
\begin{eqnarray}
\label{assume:bounded}
\frac{m}{n} & \le & \rho_\star.
\end{eqnarray}
\begin{proposition}[Uniform downward drift]\label{pr:uniform}
There are constants $\theta,\delta>0$, depending on $\rho_\star$ only, such that for any $n\ge 2$, any initial state $\eta\in\Omega$,  any $i\in\{1,\ldots,n\}$, and any time $t\in\dR_+$,
\begin{eqnarray*}
\EE\left[e^{\theta\eta_i(t) }\right] & \le & 2\left(1+e^{\theta\eta_i-\delta t}\right).
\end{eqnarray*}

\end{proposition}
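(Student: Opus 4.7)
My plan is a Foster--Lyapunov drift argument with the exponential Lyapunov function $V(\eta):=e^{\theta \eta_i}$, where $\theta>0$ will be chosen small in terms of $\rho_\star$. First, applying the generator \eqref{def:generator} gives a direct calculation
\[
\cL V(\eta) \;=\; V(\eta)\bigg[\tfrac{n-1}{n}(e^{-\theta}-1)\mathbf{1}_{\eta_i\ge 1} + (e^\theta-1)\tfrac{N_i(\eta)}{n}\bigg],
\]
where $N_i(\eta):=|\{j\ne i : \eta_j\ge 1\}|$ is the number of non-empty sites other than $i$: the departure term contributes the factor $e^{-\theta}$ at rate $(n-1)/n$, and each non-empty other site contributes an arrival at rate $1/n$ scaling by $e^\theta$.

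The crucial combinatorial input is the bound $N_i(\eta)\le m-\eta_i$, which holds because every non-empty site away from $i$ holds at least one of the $m-\eta_i$ particles outside $i$. Substituting into the drift and using $m\le \rho_\star n$ yields, for $\eta_i\ge 1$,
\[
\cL V(\eta) \;\le\; V(\eta)\bigg[-(1-e^{-\theta})\Big(1-\tfrac{1}{n}\Big) + (e^\theta-1)\rho_\star - \tfrac{e^\theta-1}{n}\,\eta_i\bigg],
\]
with a similar (easier) estimate when $\eta_i=0$. The goal is to show that for $\theta$ small enough (depending only on $\rho_\star$) this implies a Foster--Lyapunov inequality $\cL V \le -\delta V + C$ with $\delta,C>0$ depending only on $\rho_\star$. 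Once this is achieved, Dynkin's formula applied to $M(t):=\EE[V(\eta(t))]$ gives $M'(t)\le -\delta M(t)+C$, and Gronwall yields $M(t)\le e^{-\delta t}e^{\theta \eta_i}+C/\delta$; tuning $\theta,\delta$ so that $C/\delta\le 2$ (consistent with the equilibrium moment estimate $\EE_\pi[e^{\theta \eta_i}]\le 2$ valid for $\theta$ small in terms of $\rho_\star$) produces exactly the announced inequality.

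The hard part will be establishing the Foster--Lyapunov inequality with constants uniform in $n$. In the sub-critical range $\rho_\star<1$ this is essentially routine, as the bulk coefficient $-(1-e^{-\theta})+\rho_\star(e^\theta-1)$ is already negative for small $\theta$, and $C$ only needs to absorb the boundary term at $\eta_i=0$. The genuine difficulty is the regime $\rho_\star\ge 1$, where the bulk coefficient is positive and one must rely entirely on the linear penalty $-(e^\theta-1)\eta_i/n$, which only becomes effective once $\eta_i$ reaches order $n$. A naive application then makes $C$ grow exponentially in $n$, which would be fatal. Circumventing this will likely require either a truncated Lyapunov such as $e^{\theta\min(\eta_i,K)}$ with $K$ independent of $n$, or a two-stage argument in which a crude first-moment bound on $\eta_i(t)$ is used to replace the loose $N_i\le m-\eta_i$ estimate by a tighter control on the arrival rate, which is then reinjected into the exponential moment analysis.
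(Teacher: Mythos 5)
Your generator computation and the Dynkin/Gr\"onwall framework are exactly the paper's starting point (your drift identity is equivalent to the paper's differential equation for $\phi_\theta(t)=\EE[e^{\theta\eta_i(t)}]$), but the proposal stops precisely at the step that constitutes the actual content of the proposition, and the fallback ideas you sketch do not close it. The bound $N_i(\eta)\le m-\eta_i$ is hopeless once $\rho_\star\ge 1$: when $m\ge n$ every site can be occupied, so $N_i/n$ can be as close to $1$ as the departure rate, the leading-order drift of $e^{\theta\eta_i}$ is then nonnegative for every $\theta>0$, and no static estimate using only conservation of the particle number can improve this. In particular your second workaround fails at the root: a first-moment bound on $\eta_i(t)$ (or on all the $\eta_j(t)$, whose sum is identically $m$) says nothing about the number of \emph{non-empty} sites, which is the quantity driving the arrival rate; what is needed is a lower bound of order $n$ on the number of \emph{empty} sites, and that is a genuinely dynamical fact, not a consequence of the density. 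The truncated Lyapunov function $e^{\theta\min(\eta_i,K)}$ is also a dead end for this statement: it can only bound $\EE[e^{\theta\min(\eta_i(t),K)}]$ and cannot produce the term $e^{\theta\eta_i-\delta t}$, i.e.\ the exponential relaxation from a large initial height, which is exactly what the proposition is used for later (the burn-in time in the fast-mixing argument is proportional to $\max_i\eta_i$).

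The missing idea is the paper's Lemma~\ref{lm:uniform} (``many empty sites''): uniformly over the initial state, after time $1$ the proportion of empty sites exceeds a constant $\gamma=\gamma(\rho_\star)>0$ except on an event of probability $e^{-\gamma n}$. This is proved from the graphical construction: at least $\lceil n/2\rceil$ sites carry at most $2\rho_\star$ particles; for each such site, the event that it attempts at least $2\rho_\star$ ejections towards a fixed complementary set and receives no particle during $[0,1]$ has probability bounded below by a constant depending only on $\rho_\star$; these events are independent because they involve disjoint families of Poisson clocks, and Hoeffding's inequality gives the exponential concentration, which then transfers to all $t\ge 1$ by the Markov property. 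With this in hand, the drift coefficient in your identity becomes $1-Q_0(t)-e^{-\theta}\le 1-\gamma-e^{-\theta}$, which is strictly negative for $\theta$ small depending only on $\rho_\star$ (the paper takes $\theta=\gamma/(1+\rho_\star)$), while the exceptional event $\{Q_0(t)\le\gamma\}$ is absorbed using the crude bound $\eta_i(t)\le\rho_\star n$ since $e^{(\theta\rho_\star-\gamma)n}\le 1$ for that choice of $\theta$; the interval $[0,1]$ is handled by the trivial growth bound and the final constant is reduced to $2$ by a Jensen rescaling $\theta\mapsto\alpha\theta$. Without an input of this type, your Foster--Lyapunov inequality with $n$-independent constants simply is not available for $\rho_\star\ge 1$, so as written the proof covers only the subcritical case $\rho_\star<1$.
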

The reason behind this result is the existence of a uniform lower-bound on the proportion of empty sites in the system after time $t=1$ ($1$ can actually be replaced by any positive constant).
\begin{lemma}[Many empty sites]\label{lm:uniform}There is a constant $\gamma\in(0,1)$, depending on $\rho_\star$ only, such that for any $n\ge 2$,  any initial state $\eta\in\Omega$ and any time $t\in[1,\infty)$,
\begin{eqnarray*}
\PP\left(Q_0(t)\le \gamma\right) & \le & e^{-\gamma n}.
\end{eqnarray*}
\end{lemma}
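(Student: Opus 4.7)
The plan is to split the proof into two stages: first establish a lower bound on the expectation $\EE[Q_0(t)]$, and then upgrade it to the required exponential tail via a martingale concentration argument. By the Markov property applied at time $t - 1$, it suffices to prove both assertions for the unit time window $t = 1$, the uniformity in $t \ge 1$ following by conditioning on $\eta(t-1)$. For the expectation bound I argue by contradiction: suppose a sequence $(n_\ell, \eta^{(\ell)})$ satisfies $\EE[Q_0^{(n_\ell)}(1)] \to 0$. The set
\[
\cK_\star := \left\{q \in \cP(\dZ_+) \colon \sum_{k \ge 1} k q_k \le \rho_\star\right\}
\]
is compact in $\ell^1$, because the uniform tail bound $\sum_{k > K} q_k \le \rho_\star/K$ provides uniform summability, so weak convergence upgrades to $\ell^1$-convergence on $\cK_\star$. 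Extracting a subsequence we may assume $Q^{(n_\ell)}(0) \to q_\infty \in \cK_\star$ in $\ell^1$; Proposition~\ref{pr:chaos} then gives $Q_0^{(n_\ell)}(1) \to \qq_0(1)$ in probability, and bounded convergence gives $\EE[Q_0^{(n_\ell)}(1)] \to \qq_0(1)$. But $\qq_0(1) = \PP(X(1) = 0) > 0$ by the probabilistic representation of Section~\ref{sec:liquid} combined with Lemma~\ref{lm:void} (in the degenerate case $\lambda = 0$, $q_\infty = \delta_0$ and the bound is trivial). This contradicts the assumption and yields $\EE[Q_0(1)] \ge 2\gamma$ for some $\gamma = \gamma(\rho_\star) > 0$ uniformly in $n$ and $\eta$.

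For the concentration step I apply Azuma--Hoeffding to the Doob martingale $M_s := \EE[Q_0(1) \mid \cF_s]$, $s \in [0, 1]$. The key observation is that a single elementary transition of the chain alters $M_s$ by at most $2/n$ \emph{deterministically}. Indeed, coupling two copies of the dynamics started from $\eta$ and $\eta + \delta^j - \delta^i$ via synchronous Poisson clocks at every site pair, a short case analysis shows that $\sum_i |\eta_i(u) - \eta'_i(u)|$ is non-increasing in $u$; starting at $2$, it stays $\le 2$ for all later times, which forces $|Q_0(u; \eta) - Q_0(u; \eta')| \le 2/n$ and hence bounds the jumps of $M$. Since the number of chain transitions in $[0,1]$ is Poisson-distributed with mean $\le n$ and exceeds $2n$ only with probability $\le e^{-cn}$, Azuma--Hoeffding on the complementary event yields
\[
\PP\!\left(|Q_0(1) - \EE[Q_0(1)]| \ge \gamma\right) \le 2 \exp\!\left(-c' n \gamma^2\right)
\]
for some universal $c' > 0$. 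Combined with the first stage and shrinking $\gamma$ so that $c'\gamma \ge 1$, this proves the lemma for all $n$ above some threshold $n_0(\rho_\star)$; the finitely many remaining values of $n$ are handled by irreducibility after possibly further shrinking $\gamma$.

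The hardest part will be the expectation bound: while each individual ingredient (the $\ell^1$-compactness of $\cK_\star$, the propagation-of-chaos input from Proposition~\ref{pr:chaos}, and the strict positivity of $\qq_0(1)$) is already available, obtaining a lower bound that is \emph{uniform} over all admissible $n$ and $\eta$ requires the compactness-and-contradiction scheme above rather than a direct quantitative computation. In contrast, the concentration step is essentially routine once the $\ell^1$-contractivity of the basic coupling is noted.
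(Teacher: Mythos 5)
Your argument is viable but takes a genuinely different route from the paper. The paper proves the lemma by a direct, fully quantitative construction: in the graphical representation with clocks $\Xi_{i\to j}$, it selects $\lceil n/2\rceil$ sites carrying at most $2\rho_\star$ particles each, and for each such site $i$ considers the event $G_i$ that $i$ attempts at least $2\rho_\star$ departures towards the complementary set and receives no arrival during $[0,1]$; these events force $\eta_i(1)=0$, have probability at least an explicit $p(\rho_\star)>0$, and are independent because they involve disjoint families of clocks, so a binomial Hoeffding bound yields the claim with $\gamma=p^2/4$, uniformly over all $n\ge 2$ and all $\eta$ at once (the extension to $t\ge1$ by the Markov property is the same as yours). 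Your two-stage scheme (a uniform lower bound on $\EE[Q_0(1)]$ by compactness plus propagation of chaos, then martingale concentration) trades this elementary computation for heavier machinery and a non-explicit $\gamma$, but its main ingredients are sound: the set $\cK_\star$ is indeed $\ell^1$-compact, there is no circularity in invoking Proposition \ref{pr:chaos} and Lemma \ref{lm:void}, and the $\ell^1$-contractivity $\sum_i|\eta_i(u)-\eta_i'(u)|\le 2$ of the synchronous coupling is correct for the constant-rate dynamics. One caveat in the first stage: Proposition \ref{pr:chaos} is stated under (\ref{assume:sparse}) with $\rho\in(0,\infty)$, so you must also extract so that $m_\ell/n_\ell$ converges; if the limiting density is $0$ then $q_\infty=\delta_0$, $\qq\equiv\delta_0$ and the contradiction is immediate, otherwise the proposition applies as stated.

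Two points in the concentration step need repair. First, ``Azuma--Hoeffding on the complementary event'' is not legitimate as written: conditioning on the future event that the number of transitions in $[0,1]$ is at most $2n$ destroys the martingale property of $M_s=\EE[Q_0(1)\mid\cF_s]$. The standard fix is either to stop the martingale at the time of the $(2n)$-th transition and add back the probability of the discarded event, or to apply a Freedman/Bernstein-type martingale inequality directly, using that the jumps are bounded by $2/n$ and the predictable quadratic variation by $n\cdot(2/n)^2=4/n$; either way one gets $\PP\left(|Q_0(1)-\EE[Q_0(1)]|\ge\gamma\right)\le 2e^{-c\gamma^2n}+e^{-cn}$, which suffices. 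Second, the final bookkeeping ``shrinking $\gamma$ so that $c'\gamma\ge1$'' is backwards; what saves you is that the assertion of the lemma is monotone in $\gamma$ (if it holds for $\gamma_0$ it holds for every $\gamma\le\gamma_0$), so after proving $\PP(Q_0(1)\le\gamma_1)\le 2e^{-c'\gamma_1^2 n}$ with $\gamma_1$ half the expectation bound, you take $\gamma\le\min\left(\gamma_1,\tfrac12 c'\gamma_1^2\right)$ and enlarge the threshold $n_0$ to absorb the factor $2$, treating the finitely many small $n$ as you indicate. With these corrections your proof is complete, though less self-contained and less explicit than the paper's.
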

\begin{proof}
We can construct the zero-range process using an independent, rate$-\frac 1n$ Poisson point process $\Xi_{i\to j}$ for each pair $(i,j)\in[n]\times [n]$: the successive points of $\Xi_{i\to j}$ indicate the times at which site $i$ attempts to send a particle out to site $j$, and the move is allowed if and only if $i$ is not empty. Because of (\ref{assume:bounded}), at least half of the sites $i\in[n]$ must satisfy $\eta_i \le 2\rho_\star$, and we may thus select a subset $A$ of them with $|A|=\lceil n/2\rceil$. Note that $n-|A|=\lfloor n/2\rfloor\ge n/3$, since $n\ge 2$. For each $i\in A$, consider the "good" event 
\begin{eqnarray*}
G_i & := & \left\{\sum_{j\in [n]\setminus A}\Xi_{i\to j}\left([0,1]\right)\ge 2\rho_\star \right\}\bigcap \left\{\sum_{j\in[n]} \Xi_{j\to i}\left([0,1]\right)=0\right\}.
\end{eqnarray*}
Then by construction, we have $G_i\subseteq \left\{\eta_i(1)=0\right\}$, and hence
\begin{eqnarray*}
Q_0(1) & \ge & \frac{1}{n}\sum_{i\in A}{\bf 1}_{G_i}. 
\end{eqnarray*}
Since $\sum_{j\in[n]}\Xi_{j\to i}([0,1])$ and $\sum_{j\in [n]\setminus A}\Xi_{i\to j}([0,1])$ are independent Poisson random variables with mean $1$ and at least $1/3$ respectively, we have
\begin{eqnarray*}
\PP\left(G_i\right) & \ge & e^{-4/3}\sum_{k\ge 2\rho_\star}\frac{3^{-k}}{k!} \ =: \ p.
\end{eqnarray*}
Moreover, the events $(G_i)_{i\in A}$ are independent because $G_i$ depends only on the $\Xi_{k\to \ell}$ for $(k,\ell)$ in $$B_i:=(\{i\}\times [n]\setminus A)\cup([n]\times \{i\}),$$ and the $(B_i)_{i\in A}$ are pairwise disjoint. Thus, $ \frac{1}{n}\sum_{i\in A}{\bf 1}_{G_i}$ stochastically dominates a Binomial random variable with parameters $\lceil n/2\rceil$ and $p$. By Hoeffding's inequality, we deduce that
\begin{eqnarray*}
\PP\left(\frac{1}{n}\sum_{i\in A}{\bf 1}_{G_i}\le \frac{p}{4} \right) & \le & \exp\left(-\frac{p^2n}{4}\right),
\end{eqnarray*}
and so we may take $\gamma=p^2/4$ to obtain the claim for $t=1$. Since the result is uniform in the choice of the initial state $\eta$, the claim for $t\ge 1$ follows automatically by Markov's property. 
\end{proof}

\begin{proof}[Proof of Proposition \ref{pr:uniform}]
For $\theta>0$, Dynkin's formula ensures that $\phi_\theta(t):=\EE\left[e^{\theta \eta_i(t)}\right]$ satisfies
\begin{eqnarray}
\label{dynkin:uniform}
\frac{d \phi_\theta(t)}{dt} & = &   (e^\theta-1)\EE\left[e^{\theta \eta_i(t)}\left(1-Q_0(t)-\frac{1+e^{-\theta}(n-1)}{n}{\bf 1}_{(\eta_i(t)\ge 1)}\right)\right].
\end{eqnarray}
The trivial observation that the right-hand side is at most $(e^\theta-1)\phi_\theta(t)$ already yields
\begin{eqnarray}
\label{burnin}
\phi_\theta(t) & \le & \phi_\theta(0)\exp\left\{\left(e^\theta-1\right)t\right\}.
\end{eqnarray}
We will use this crude bound only for $t\in[0,1]$. For $t\ge 1$, we may instead invoke Lemma \ref{lm:uniform} to get
$
 \PP\left(Q_0(t)\le \gamma\right)  \le  e^{-\gamma n},
$
for some $\gamma>0$ that depends only on $\rho_\star$. Going back to (\ref{dynkin:uniform}), we have
\begin{eqnarray*}
\frac{d \phi_\theta(t)}{dt}  & \le & (e^\theta-1)\EE\left[e^{\theta \eta_i(t)}\left(1-Q_0(t)-e^{-\theta}\right)+e^{-\theta}\right] \\
& \le & (e^\theta-1)\left\{(1-\gamma-e^{-\theta})\phi_\theta(t)+e^{-\theta}+(1-e^{-\theta})e^{(\theta \rho_\star  - \gamma)n}\right\},
\end{eqnarray*}
where we have split the expectation according to whether $Q_0(t)> \gamma$ or $Q_0(t)\le \gamma$ and, in the latter case, used the crude bounds $Q_0(t)\ge 0$ and $\eta_i(t)\le \rho_\star n$. Let us now choose
$
\theta := \frac{\gamma}{1+\rho_\star},
$
so that $\theta\rho_\star-\gamma\le 0$   and $1-\gamma-e^{-\theta}\le -\theta \rho_\star$. We are then left with the differential inequality
\begin{eqnarray*}
\frac{d \phi_\theta(t)}{dt} & \le & (e^\theta-1)\left(1-\rho_\star\theta\phi_\theta(t)\right),
\end{eqnarray*}
which we may integrate to deduce that for all $t\ge 1$,
\begin{eqnarray*}
\phi_\theta(t) & \le & \frac{1}{\theta \rho_\star}+\left(\phi_\theta(1)-\frac{1}{\theta \rho_\star}\right)e^{-\delta (t-1)}
\end{eqnarray*}
where $\delta:=(e^{\theta}-1)\theta\rho_\star>0$. Combining this with (\ref{burnin}), we conclude that for all $t\ge 0$,
\begin{eqnarray*}
\phi_\theta(t) & \le & \kappa\left(1+\phi_\theta(0)e^{-\delta t}\right),
\end{eqnarray*}
where   $\kappa,\delta,\alpha$ depend only on $\rho$. Finally, observe that these three constants may  respectively be replaced with $\kappa^\alpha,\delta\alpha,\theta\alpha$ for any $\alpha\in(0,1)$, since by Jensen's inequality, 
\begin{eqnarray*}
\phi_{\theta\alpha}(t) & \le  & \left(\phi_{\theta}(t)\right)^\alpha \ \le \ \kappa^\alpha\left(1+\phi_{\theta\alpha}(0)e^{-\delta\alpha t}\right).
\end{eqnarray*}
Choosing $\alpha$ small enough will make $\kappa^\alpha\le 2$, and the result is proved.
\end{proof}

\subsection{Partial exchangeability}
To deal with issue (ii), we introduce an object that refines the empirical distribution $Q(t)$ studied in Section \ref{sec:liquid}: the \emph{empirical transition matrix} of the system,
\begin{eqnarray}
W(t) & := & \frac{1}{n}\sum_{i=1}^n\delta_{\left(\eta_i(0),\eta_i(t)\right)}.
\end{eqnarray}
In  words, for each $(k,\ell)\in\dZ_+^2$,  $W_{k,\ell}(t)$ is a $[0,1]-$random variable indicating the proportion of sites that start with $k$ particles at time $0$ and end up with $\ell$ particles at time $t$. 
Contrarily to $Q(t)$, the understanding of $W(t)$ suffices to fully recover the law of $\eta(t)$:
\begin{lemma}[Partial exchangeability]
\label{lm:exchange}
Fix an initial configuration $\eta$ and a time $t\ge 0$. Then the conditional law of $\eta(t)$ given $W(t)$ is uniform over all configurations $\xi\in\Omega$ such that
\begin{eqnarray}
\label{exchange}
\frac{1}{n}\sum_{i=1}^n\delta_{\left(\eta_i,\xi_i\right)} & = & W(t).
\end{eqnarray}
\end{lemma}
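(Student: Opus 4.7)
The plan is to deduce the statement from the basic fact that the generator $\cL$ in (\ref{def:generator}) is invariant under arbitrary relabellings of the sites, so that conditioning on $W(t)$ amounts to quotienting out by the subgroup of relabellings that fix the initial configuration $\eta$.

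First, for a permutation $\sigma\in S_n$ I would let $\sigma\cdot\xi$ denote the configuration $(\xi_{\sigma^{-1}(1)},\ldots,\xi_{\sigma^{-1}(n)})$. A short line-by-line inspection of (\ref{def:generator}) shows that $\cL(f\circ\sigma)=(\cL f)\circ\sigma$ (intuitively: the sum $\sum_{i,j}$ is relabelled to $\sum_{\sigma(i),\sigma(j)}$ while the indicator and the increment transform covariantly). Passing to the semigroup $P_t=e^{t\cL}$, this yields the distributional identity $\eta^{\sigma\cdot\xi}(t) \stackrel{d}{=}\sigma\cdot\eta^\xi(t)$ for every initial state $\xi$ and every permutation $\sigma$.

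Next, let $\mathrm{Stab}(\eta):=\{\sigma\in S_n:\sigma\cdot\eta=\eta\}$ be the (direct product of symmetric groups on the) fibres $\{i:\eta_i=k\}$. Applying the previous identity to any $\sigma\in\mathrm{Stab}(\eta)$ gives $\sigma\cdot\eta(t)\stackrel{d}{=}\eta(t)$. Moreover, $W(t)$ is itself invariant under $\mathrm{Stab}(\eta)$ acting on $\eta(t)$: a short substitution shows that replacing $\eta_i(t)$ with $\eta_{\sigma^{-1}(i)}(t)$ in (\ref{exchange}) leaves the empirical measure unchanged, because $\eta_{\sigma(j)}=\eta_j$ on each fibre. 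Therefore the conditional law $\cL(\eta(t)\mid W(t)=w)$ is invariant under $\mathrm{Stab}(\eta)$.

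It remains to check that $\mathrm{Stab}(\eta)$ acts transitively on $\cA_w:=\{\xi\in\Omega:\frac{1}{n}\sum_i\delta_{(\eta_i,\xi_i)}=w\}$; this is the only combinatorial step. Given $\xi,\xi'\in\cA_w$, we have $|\{i:\eta_i=k,\xi_i=\ell\}|=nw_{k,\ell}=|\{i:\eta_i=k,\xi'_i=\ell\}|$ for every $(k,\ell)$, so within each fibre $\{i:\eta_i=k\}$ the restrictions of $\xi$ and $\xi'$ have identical multisets and are exchanged by some bijection of that fibre; gluing these fibre-bijections produces $\sigma\in\mathrm{Stab}(\eta)$ with $\sigma\cdot\xi=\xi'$. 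Combining transitivity with the $\mathrm{Stab}(\eta)$-invariance of the conditional law forces it to be uniform on $\cA_w$, as desired. No step is truly hard here: the only potential pitfall is keeping the conventions for the group action coherent so that the invariance of $\cL$ is transported correctly to the semigroup and to $W(t)$.
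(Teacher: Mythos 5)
Your proof is correct and follows essentially the same route as the paper: permutation equivariance of the generator, hence of the law of the process, restricted to permutations fixing the initial state $\eta$, combined with the invariance of $W$ under such permutations. The paper leaves the final step implicit (your explicit verification that $\mathrm{Stab}(\eta)$ acts transitively on the set of configurations with a given empirical matrix is exactly what makes ``the result follows'' rigorous), so yours is just a more detailed write-up of the same argument.
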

\begin{proof}
Since the rate at which a site attempts to send a particle to another site is the same for all pairs of sites, the zero-range process enjoys the following  obvious symmetry: if $(\eta(t)\colon t\ge 0)$ is a zero-range process and if $\sigma\colon [n]\to[n]$ is a permutation, then the process $(\eta'(t)\colon t\ge 0)$ defined by 
\begin{eqnarray*}
\eta'_i(t) &:=& \eta_{\sigma(i)}(t)
\end{eqnarray*} is again a zero-range process. In particular, if $\sigma$ preserves the initial state ($\eta'(0)=\eta(0)$), then the two processes have the same law. Since $W$ is invariant by such permutations, the result follows.
\end{proof}
In light of this, our task boils down to understanding the behavior of the process $\left(W(t)\colon t\ge 0\right)$. The following proposition lifts the results obtained for $Q(t)$ to $W(t)$. For convergence purposes, we regard arrays as elements of the Banach space $\ell^1(\dZ_+^2)$, with norm
$
\|w\|  =  \sum_{k,\ell}|w_{k,\ell}|.
$
\begin{proposition}[Matrix refinement]\label{pr:matrix} For any fixed time horizon $T$, in the regime  (\ref{assume:sparse})-(\ref{assume:empirical}), we have 
\begin{eqnarray*}
\sup_{t\in[0,T]}\left\|W(t)-w(t)\right\|  \xrightarrow[n\to\infty]{\PP}  0,
\end{eqnarray*}
where $w_{k,\ell}(t) = \PP(X(0)=k,X(t)=\ell)$ and $(X(t)\colon t\ge 0)$ is the process defined at (\ref{def:z}). 
\end{proposition}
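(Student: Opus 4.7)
The plan is to mimic the proof of Proposition \ref{pr:chaos} almost verbatim, now in $\ell^1(\dZ_+^2)$ in place of $\ell^1(\dZ_+)$. I would first note that $W$ is again a Markov process, with jumps $w\mapsto w+\frac1n(\delta^{(k,\ell-1)}+\delta^{(k',\ell'+1)}-\delta^{(k,\ell)}-\delta^{(k',\ell')})$ occurring at rate $\mathbf{1}_{(\ell\ge 1)}w_{k,\ell}(nw_{k',\ell'}-\mathbf{1}_{(k,\ell)=(k',\ell')})$. The infinitesimal drift decomposes as $\widetilde D=\widetilde F+\frac1n\widetilde R$ with
\begin{equation*}
\widetilde F_{k,\ell}(w) \;=\; w_{k,\ell+1}-w_{k,\ell}\mathbf{1}_{(\ell\ge 1)}-\Bigl(1-\textstyle\sum_j w_{j,0}\Bigr)\bigl(w_{k,\ell}-w_{k,\ell-1}\mathbf{1}_{(\ell\ge 1)}\bigr)
\end{equation*}
and $\|\widetilde R(w)\|\le C\|w\|$. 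Since $\|W(t)\|=\|w(t)\|=1$ throughout, the exact analog of (\ref{lipschitz}) yields a uniform Lipschitz bound on $\widetilde F$ over the set of probability measures, and Picard--Lindel\"of produces a unique $\ell^1$-valued solution to $\dot w=\widetilde F(w)$ with initial datum $w(0)_{k,\ell}=q_k\mathbf{1}_{k=\ell}$---this is the $\ell^1$-limit of $W(0)$, which is diagonal with $W_{k,k}(0)=Q_k(0)\to q_k$ by (\ref{assume:empirical}).

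Next, I would identify the unique solution with $(k,\ell)\mapsto\PP(X(0)=k,X(t)=\ell)$. The Kolmogorov forward equations for the time-inhomogeneous birth-and-death process $\bX$---downward rate $1$, upward rate $1-\qq_0(t)=\PP(X(t)>0)$---read precisely as the ODE above, upon observing that $\sum_j\PP(X(0)=j,X(t)=0)=\qq_0(t)$. By uniqueness, the two functions coincide, so this provides the probabilistic description of $w$ announced in the statement.

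Dynkin's formula turns $M(t):=W(t)-W(0)-\int_0^t\widetilde D(W(s))\,ds$ into an $\ell^1(\dZ_+^2)$-valued martingale, and Gr\"onwall's Lemma yields
\begin{equation*}
\sup_{t\in[0,T]}\|W(t)-w(t)\| \;\le\; C_T\Bigl(\|W(0)-w(0)\|+\tfrac{T}{n}\sup_{t\le T}\|\widetilde R(W(t))\|+\sup_{t\le T}\|M(t)\|\Bigr).
\end{equation*}
The first term tends to zero by the previous paragraph, and the second is $O(T/n)$ since $\|\widetilde R(W)\|\le C$.

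The main obstacle is the martingale term, because the $\ell^1$ norm now ranges over the two-dimensional index set $\dZ_+^2$. Each coordinate $M_{k,\ell}$ is a pure-jump martingale with jump size $1/n$ and total intensity at most $Cn\bigl(W_{k,\ell}+W_{k,\ell+1}+W_{k,\ell-1}\mathbf{1}_{(\ell\ge 1)}\bigr)$; combined with the conservations $\sum_{k,\ell}kW_{k,\ell}(t)=\sum_{k,\ell}\ell W_{k,\ell}(t)=m/n=O(1)$, this gives
\begin{equation*}
\sum_{k,\ell}(k+\ell+1)\,\EE[M_{k,\ell}(T)^2]=O(1/n)\quad\text{and}\quad\sum_{k,\ell}(k+\ell+1)\,\EE|M_{k,\ell}(T)|=O(1).
\end{equation*}
Splitting $\EE\sum_{k,\ell}|M_{k,\ell}(T)|$ at a cutoff $k+\ell\le N$---bounding the low-index part via Cauchy--Schwarz against the variance estimate (which gives $O(N/\sqrt n)$ since $\#\{k+\ell\le N\}=O(N^2)$), and the high-index part via the weighted first-moment bound divided by $N+1$ (which gives $O(1/N)$)---and optimizing in $N$ yields $\EE\|M(T)\|=o(1)$. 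Doob's maximal inequality applied to the submartingale $\|M\|$ then promotes this to the required uniform bound in $t$.
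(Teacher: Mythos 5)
Your proposal is correct and follows essentially the same route as the paper: the same Markov description of $W$, the same drift decomposition $F+\frac1nR$ (your coefficient $1-\sum_j w_{j,0}$ agrees with the paper's $\sum_{k\ge 0}\sum_{\ell\ge 1}w_{k,\ell}$ on probability measures), the same identification of $w$ via the Kolmogorov equations for $(X(0),X(t))$, and the same Gr\"onwall--martingale--Doob scheme. Your extra Picard--Lindel\"of step is harmless but unnecessary (the limit is already given probabilistically), and your cutoff-in-$(k+\ell)$ argument simply makes explicit the moment estimate the paper summarizes as ``more than enough.''
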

\begin{proof} 
The proof mimics that of Proposition \ref{pr:chaos}, except that the existence of the fluid limit is here already granted. For a time-inhomogeneous birth-and-death chain $\bX$ with downward rate $1$ and upward rate $r(t)$, the law $w(t)$ of the pair $(X(0),X(t))$ satisfies the differential equation
\begin{eqnarray*}
\frac{dw_{k,\ell}}{dt}  & := & w_{k,\ell+1}- {\bf 1}_{\left(\ell\ge 1\right)}w_{k,\ell}  - r\left(w_{k,\ell}-{\bf 1}_{\left(\ell\ge 1\right)}w_{k,\ell-1}\right), \qquad (k,\ell)\in\dZ_+^2.
\end{eqnarray*}
Here, we further have $r(t)=\PP(X(t)\ge 1)=\sum_{k\ge 0}\sum_{\ell\ge 1}w_{k,\ell}(t).$ Consequently, for all $t\in\dR_+$,
\begin{eqnarray}
\label{matrix:fluid}
w(t) & = & w(0)+\int_0^tF(w(s))ds,
\end{eqnarray}
where the drift $F\colon \ell^1(\dZ_+^2)\to \ell^1(\dZ_+)$ is defined by 
\begin{eqnarray*}
F_{k,\ell}(w) & = & w_{k,\ell+1}-{\bf 1}_{\left(\ell\ge 1\right)}w_{k,\ell}-\left(\sum_{k\ge 0}\sum_{\ell\ge 1}w_{k,\ell}\right)\left(w_{k,\ell}-{\bf 1}_{\left(\ell\ge 1\right)}w_{k,\ell-1}\right).
\end{eqnarray*}
Observe that $F$ is locally Lipschitz: for $w,w'\in\ell^1(\dZ_+^2)$, 
\begin{eqnarray}
\label{matrix:lip}
\left\|F(w)-F(w')\right\| & \le & 2\left(1+\|w\|+\|w'\|\right)\|w-w'\|.
\end{eqnarray}
On the other hand,  $(W(t)\colon t\ge 0)$ is a Markov process on a finite subset of $\cP(\dZ^2_+)$ with jumps  $w\mapsto w+\frac{1}{n}\Delta_{i,j,k,\ell}$ occurring at rate $c_{i,j,k,\ell}(w)$, where for each $(i,j,k,\ell)\in\dZ_+^4$, 
\begin{eqnarray*}
\Delta_{i,j,k,\ell} & = & \delta_{i,j-1}+\delta_{k,\ell+1}-\delta_{i,j}-\delta_{k,\ell}\\
c_{i,j,k,\ell}(w) & = & {\bf 1}_{(j\ge 1)}w_{i,j}\left(n w_{k,\ell}-{\bf 1}_{((i,j)=(k,\ell))}\right).
\end{eqnarray*}
Consequently, Dynkin's formula asserts that the compensated process
\begin{eqnarray}
\label{matrix:dynkin}
M(t) & := & W(t)-W(0)  -\int_0^t D\left(W(s)\right)ds
\end{eqnarray}
is a  $\ell^1(\dZ_+^2)-$valued martingale, where the infinitesimal drift $w\mapsto D(w)$ is given by
\begin{eqnarray*}
D(w) & := & \frac{1}{n}\sum_{(i,j,k,l)\in\dZ_+^4}c_{i,j,k,\ell}(w){\Delta_{i,j,k,\ell}}.
\end{eqnarray*}
Comparing with the definition of $F$, we see that $D=F+\frac{1}{n}R$, where 
\begin{eqnarray*}
R_{k,\ell}(w) & := & w_{k,\ell+1}+w_{k,\ell-1}{\bf 1}_{(\ell\ge 2)}-2w_{k,\ell}{\bf 1}_{(\ell\ge 1)}.
\end{eqnarray*}
Subtracting (\ref{matrix:fluid}) from (\ref{matrix:dynkin})  and using (\ref{matrix:lip}), we obtain
\begin{eqnarray*}
\left\|W(t)-w(t)\right\| & \le & \varepsilon(t)+6\int_0^t\left\|W(s)-w(s)\right\|ds,
\end{eqnarray*}
where we have set
\begin{eqnarray*}
\varepsilon(t) & := & \|W(0)-w(0)\|+\frac{1}{n}\left\|\int_0^tR\left(W(s)\right)ds\right\|+\|M(t)\|.
\end{eqnarray*}
By Gr\"onwall's Lemma, we deduce that 
\begin{eqnarray*}
\sup_{t\in[0,T]}\left\|W(t)-w(t)\right\| & \le & \left(\sup_{t\in[0,T]}\left\|\varepsilon(t)\right\|\right)e^{6T},
\end{eqnarray*}
and it only remains to show that $\varepsilon(t)\xrightarrow[]{\PP} 0$ as $n\to\infty$. We treat each term appearing in the definition of $\varepsilon(t)$ separately. The first one vanishes by assumption (\ref{assume:empirical}). For the second one, it suffices to note that $\|R(w)\|\le 4\|w\|$, so that
\begin{eqnarray*}
\sup_{t\in[0,T]}\frac{1}{n}\left\|\int_0^t R\left(W(s)\right)ds\right\| & \le & \frac{4T}{n}.
\end{eqnarray*}
Finally, the convergence $\sup_{t\in[0,T]}\left\|M(t)\right\|\xrightarrow[]{\PP} 0$ will follow from Doob's maximal inequality if we can show that $\EE\left[\|M(T)\|\right]\to 0$. For each fixed $(k,\ell)\in\dZ_+^2$, $M_{k,\ell}$ is a real-valued martingale with jumps of size at most $\frac{2}n$ and jump rate at most $n\left(2W_{k,\ell}+W_{k,{\ell+1}}+W_{k,{\ell-1}}{\bf 1}_{(\ell\ge 1)}\right)$, so
\begin{eqnarray*}
\EE\left[\left|M_{k,\ell}(T)\right|\right] & \le & 2\int_0^T\EE\left[2W_{k,\ell}(t)+W_{k,{\ell+1}}(t)+W_{k,{\ell-1}}(t){\bf 1}_{(\ell\ge 1)}\right]dt.
\\
\EE\left[\left(M_{k,\ell}(T)\right)^2\right] & \le & \frac{4}{n}\int_0^T\EE\left[2W_{k,\ell}(t)+W_{k,{\ell+1}}(t)+W_{k,{\ell-1}}(t){\bf 1}_{(\ell\ge 1)}\right]dt.
\end{eqnarray*}
Since $\sum_{k,\ell}(k+\ell)W_{i,j}(T)=\frac{2m}{n}$, we deduce that in the regime (\ref{assume:sparse}),
\begin{eqnarray*}
\sum_{k,\ell}(k+\ell+1)\, \EE[|M_{k,\ell}(T)|] \ = \ \cO(1) & \textrm{ and } & \sum_{k,\ell}(k+\ell+1)\, \EE\left[\left(M_{k,\ell}(T)\right)^2\right] \ = \ \cO\left(\frac 1n\right). 
\end{eqnarray*}
This is more than enough to ensure that $\EE\left[\left\|M(T)\right\|\right]\to 0$, as desired.
\end{proof}

\subsection{Spectral gap argument}
To deal with issue (iii), we exploit a \emph{spectral gap} contraction argument.
Consider an irreducible, continuous-time Markov process on a finite state space $\Omega$, with generator $\cL$ and stationary law $\pi$. If $\pi$ is reversible, then $-\cL$ is a non-negative self-adjoint operator  on the Hilbert space $\ell^2(\pi)$, and the spectral gap is defined as its smallest non-zero eigenvalue:
\begin{eqnarray}
\gap & := & \min\left\{\lambda>0\colon \textrm{ker}(\lambda +\cL)\ne\emptyset\right\}.
\end{eqnarray}
 This fundamental parameter can be used to bound the total-variation distance to equilibrium via the following classical inequality (see, e.g., \cite{MR2341319}): for any initial law $\nu\in\cP(\Omega)$ and any time $t\in\dR_+$,
\begin{eqnarray}
\label{gap:contraction}
\left\|\nu P_t-\pi\right\|_{\textsc{tv}} & \le & \frac{1}{2}\left(\max_{x\in\Omega}\frac{\nu(x)}{\pi(x)}\right)^{1/2}{e^{-\gap\, t}}.
\end{eqnarray}
In the case of the mean-field zero-range process, the spectral gap was estimated by Morris \cite{MR2271475}. 
\begin{theorem}[Morris]\label{th:morris}
In the regime (\ref{assume:sparse}), the spectral gap is bounded away from $0$, i.e. ${\gap} = \Omega(1).$
\end{theorem}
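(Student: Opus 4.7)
My plan is to establish the Poincar\'e inequality $\VV_\pi(f)\le C\,\cE(f,f)$ uniformly in $n$, where $\cE$ denotes the Dirichlet form associated with $\cL$; by the variational characterization of the spectral gap, this immediately yields $\gap\ge 1/C$. The natural approach is induction on the number of sites $n$, exploiting the key structural fact that $\pi$ is the law of $n$ i.i.d.\ geometric random variables (of any mean) conditioned on their sum being $m$, so that ``integrating out'' one coordinate leaves one with the zero-range chain on $n-1$ sites.

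The starting point is the law of total variance applied to a single coordinate, say $\eta_1$:
\begin{eqnarray*}
\VV_\pi(f) & = & \EE_\pi\bigl[\VV_\pi(f\mid\eta_1)\bigr]+\VV_\pi\bigl(\EE_\pi[f\mid\eta_1]\bigr).
\end{eqnarray*}
Conditionally on $\eta_1=k$, the vector $(\eta_2,\dots,\eta_n)$ is uniform on configurations with $m-k$ particles on $n-1$ sites, i.e. the stationary law of the zero-range process on $n-1$ sites. By the induction hypothesis, the first (inner) term is controlled by the Dirichlet form built from exchanges among $\{2,\dots,n\}$, which is dominated by $\cE(f,f)$ up to a factor $(n-1)/n$. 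To handle the second (outer) term, I would exploit the fact that the marginal law of $\eta_1$ under $\pi$ is essentially geometric with uniformly bounded mean, hence has a uniform Poincar\'e constant with respect to $\pm 1$ steps. The one-step differences $\EE_\pi[f\mid\eta_1=k+1]-\EE_\pi[f\mid\eta_1=k]$ can then be expressed as averages of single-particle jump increments $f(\eta+\delta^1-\delta^j)-f(\eta)$ over configurations in which site $j\ne 1$ is non-empty, and a Cauchy--Schwarz manipulation converts this into a genuine piece of $\cE(f,f)$. A uniform lower bound on the proportion of non-empty sites plays a central role at this stage; in the regime (\ref{assume:sparse})--(\ref{assume:bounded}) it follows from the concentration of the empirical distribution around $\cG(\rho)$, much as in Lemma~\ref{lm:uniform}.

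The main obstacle is to close the induction with a constant $C$ that does not deteriorate with $n$. Summing the two contributions naively at each of the $n$ induction steps would give $C \sim n$, which is useless. The remedy, which is the technical heart of the argument, is to average the decomposition over the random choice of the coordinate being split off and exploit the permutation symmetry of $\pi$: this produces a self-improving inequality in which the piece of Dirichlet form generated at each step scales as $\cE(f,f)/n$, exactly cancelling the $n$ iterations. This is the Lu--Yau martingale method, specialised by Morris \cite{MR2271475} to the unit-rate zero-range setting; the constant-rate assumption is what makes the single-site birth-and-death analysis clean enough to survive the recursion. A delicate point, requiring the full strength of Morris's analysis rather than the sketch above, is to show that the averaged boundary Dirichlet form can actually be compared to the true $\cE(f,f)$ with a universal multiplicative constant, uniformly over all densities $m/n\le\rho_\star$.
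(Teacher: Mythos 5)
The paper does not actually prove Theorem~\ref{th:morris}: it is quoted from Morris \cite{MR2271475} and used as a black box, so there is no internal proof to compare your argument against. Judged on its own terms, your proposal is an outline with the decisive step missing rather than a proof. The law-of-total-variance decomposition, the fact that conditionally on $\eta_1=k$ the remaining coordinates are uniform on the $(n-1)$-site, $(m-k)$-particle simplex, and the inductive control of the inner term are all fine. But the entire difficulty sits in the outer term: you must bound $\VV_\pi\bigl(\EE_\pi[f\mid\eta_1]\bigr)$ by a piece of $\cE(f,f)$ whose accumulated cost over the $n$ induction steps remains $O(1)$, uniformly in $m/n\le\rho_\star$. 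Your key ingredient there --- ``a uniform lower bound on the proportion of non-empty sites'' --- is only a statement about \emph{typical} configurations under $\pi$ (or about the dynamics after time $1$, as in Lemma~\ref{lm:uniform}); a Poincar\'e inequality is a pointwise comparison of quadratic forms and cannot discard the exponentially rare condensed configurations in which only a handful of sites are occupied, which is exactly where the constant-rate case is delicate. You then concede that closing the recursion with an $n$-independent constant ``requires the full strength of Morris's analysis'': at that point the argument has become a citation of \cite{MR2271475}, not a proof of it, so as a blind attempt there is a genuine gap --- the self-improving comparison is asserted, not established.

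A secondary point: presenting the result as the Lu--Yau martingale method ``specialised by Morris to the unit-rate setting'' is a questionable attribution. The martingale/induction-on-sites route is the classical approach for zero-range processes with suitably increasing jump rates (Landim--Sethuraman--Varadhan, and in the entropy setting Caputo--Posta \cite{MR2322692}); the constant-rate case is precisely the one where that machinery does not apply directly, which is why Morris's paper required a different argument. None of this affects the paper, which, like you ultimately do, simply invokes \cite{MR2271475}; but as a standalone proof of the spectral gap bound your sketch does not go through.
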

Thus, the right-hand side of (\ref{gap:contraction}) decreases exponentially fast with $t$. Since the size of the state space $|\Omega|$ grows exponentially in $n$, maximizing over $\nu$ in (\ref{gap:contraction}) leaves us with the worst-case bound $
\tmix(\varepsilon) =  \cO(n),
$
which has the right order of magnitude, but is rather remote from our current aim: we want to prove mixing in time $o(n)$ in the absence of a solid phase. In that case, Proposition \ref{pr:matrix} will be shown to imply that the
relative entropy to equilibrium,
\begin{eqnarray*}
D_{\textsc{kl}}\left(\nu \,\|\,\pi\right) & := & \sum_{x\in\Omega}\nu(x)\log\frac{\nu(x)}{\pi(x)},
\end{eqnarray*}
quickly becomes $o(n)$. Once there, the following lemma will be invoked to conclude. 
\begin{lemma}[Fast mixing once relative entropy is small]\label{lm:entropygap}Consider a continuous-time Markov chain with reversible law $\pi$ on a finite space $\Omega$. Fix an initial law $\nu\in\cP(\Omega)$ and  $\varepsilon\in(0,1)$, and set
\begin{eqnarray}
\label{ineq:entropygap}
t & := & \frac{1}{\gap}\left(\frac{D_{\textsc{kl}}\left(\nu||\pi\right)}{\varepsilon}+\log\left(\frac{1}{\varepsilon}\right)+1\right).
\end{eqnarray}
Then,  $\|\nu P_t-\pi\|_{\textsc{tv}}\le \varepsilon$, where $P_t$ denotes the transition kernel of the process.
\end{lemma}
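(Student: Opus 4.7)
The plan is a truncate-and-contract argument. Write $h:=d\nu/d\pi$ and $D:=D_{\textsc{kl}}(\nu\|\pi)=\int h\log h\,d\pi$, and decompose $\nu$ into a bulk part on which the density is bounded (where spectral-gap contraction of the $\chi^2$ divergence is efficient) and a tail part whose mass will be controlled directly via $D$.

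Fix a threshold $K>e$ to be optimized, set $A:=\{h>K\}$, $\beta:=\nu(A)$, and write $\nu=(1-\beta)\mu+\beta\sigma$, where $\mu$ and $\sigma$ are the conditional laws of $\nu$ on $A^c$ and on $A$, respectively. Since $\|\sigma P_t-\pi\|_{\textsc{tv}}\le 1$, the triangle inequality gives
\begin{equation*}
\|\nu P_t-\pi\|_{\textsc{tv}}\ \le\ (1-\beta)\,\|\mu P_t-\pi\|_{\textsc{tv}}+\beta.
\end{equation*}
Because $d\mu/d\pi\le K/(1-\beta)$ pointwise, one has $\chi^2(\mu\|\pi)\le K/(1-\beta)$. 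Applying Cauchy--Schwarz to $d(\mu P_t)/d\pi-1=P_t\bigl(d\mu/d\pi-1\bigr)$ together with the spectral-gap contraction of the mean-zero function $d\mu/d\pi-1$ in $L^2(\pi)$ then yields the familiar bound
\begin{equation*}
\|\mu P_t-\pi\|_{\textsc{tv}}\ \le\ \tfrac12\,e^{-\gap\,t}\sqrt{\chi^2(\mu\|\pi)}\ \le\ \tfrac12\,e^{-\gap\,t}\sqrt{K/(1-\beta)}.
\end{equation*}

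To bound $\beta$, I would split the integral defining $D$ over $A$ and $A^c$. On $A$, the monotonicity of $h\log h$ for $h\ge 1$ yields $h\log h\ge h\log K$, so $\int_A h\log h\,d\pi\ge\beta\log K$. On $A^c$, the elementary inequality $h\log h\ge h-1$ (valid for all $h\ge 0$) gives $\int_{A^c}h\log h\,d\pi\ge\nu(A^c)-\pi(A^c)\ge -\beta$. Summing, $D\ge\beta(\log K-1)$, hence
\begin{equation*}
\beta\ \le\ \frac{D}{\log K-1}.
\end{equation*}

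Finally, the plan is to set $\log K:=1+2D/\varepsilon$, which forces $\beta\le\varepsilon/2$ and in particular $1-\beta\ge 1/2$, so that $\sqrt{K/(1-\beta)}\le\sqrt{2}\,e^{1/2+D/\varepsilon}$. The stated value of $t$ then makes $\tfrac12 e^{-\gap\,t}\sqrt{K/(1-\beta)}\le\varepsilon/2$, and combining with $\beta\le\varepsilon/2$ gives $\|\nu P_t-\pi\|_{\textsc{tv}}\le\varepsilon$. No step is genuinely hard; the only mildly delicate point is choosing the truncation level so that the two error sources balance within the allowance $D/\varepsilon+\log(1/\varepsilon)+1$.
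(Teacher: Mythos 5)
Your proof is correct and is essentially the paper's argument: you truncate the density $d\nu/d\pi$ at the same level $e^{1+2D_{\textsc{kl}}(\nu\|\pi)/\varepsilon}$, bound the tail mass by $\varepsilon/2$ via the same entropy computation, and apply spectral-gap contraction to the conditioned bulk before triangulating. The only cosmetic difference is that you invoke the $\chi^2$ (i.e.\ $L^2$) form of the gap contraction rather than the $L^\infty$ bound (\ref{gap:contraction}), which is immaterial here.
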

\begin{proof}
Consider the subset $\cS\subseteq \Omega$ defined by 
\begin{eqnarray*}
\cS:=\left\{x\in\Omega\colon \log\frac{\nu(x)}{\pi(x)} \le 1+\frac{2D_{\textsc{kl}}\left(\nu||\pi\right)}{\varepsilon}\right\}.
\end{eqnarray*}
Observe that by definition,
\begin{eqnarray*}
\left(1+\frac{2D_{\textsc{kl}}\left(\nu||\pi\right)}{\varepsilon}\right) \nu(\cS^c) & \le & \sum_{x\in S^c}\nu(x)\log\frac{\nu(x)}{\pi(x)} \\& \le & D_{\textsc{kl}}\left(\nu||\pi\right)+\sum_{x\in\cS}\nu(x)\log\frac{\pi(x)}{\nu(x)} \\ & \le &  D_{\textsc{kl}}\left(\nu||\pi\right)+\pi(\cS)-\nu(\cS)\\
& \le  & D_{\textsc{kl}}\left(\nu||\pi\right) +\nu(\cS^c)
\end{eqnarray*}
where at the third line we have used $\log u\le u-1$. After simplification, we are left with 
\begin{eqnarray*}
\nu(\cS^c) & \le & \frac{\varepsilon}{2}.
\end{eqnarray*}
Now, let $\widehat{\nu}:=\nu(\cdot|\cS)$ denote the projection of $\nu$ onto $\cS$. Note that
\begin{eqnarray*}
\max_{x\in\Omega}\frac{\widehat{\nu}(x)}{\pi(x)}& 
 = & \frac{1}{\nu(\cS)}\max_{x\in\cS}\frac{\nu(x)}{\pi(x)} \ \le \ \exp\left\{2+\frac{2D_{\textsc{kl}}\left(\nu||\pi\right)}{\varepsilon}\right\},
\end{eqnarray*}
because $\nu(\cS)\ge 1/2 \ge 1/e$. Consequently, (\ref{gap:contraction}) shows that for all $t\ge 0$, 
\begin{eqnarray*}
\left\|\widehat{\nu}P_t-\pi\right\|_{\textsc{tv}} & \le & \frac{1}{2}\exp\left\{1+\frac{D_{\textsc{kl}}\left(\nu||\pi\right)}{\varepsilon}-\gap\,{t}\right\}.
\end{eqnarray*}
Choosing $t$ as in (\ref{ineq:entropygap}) sets the right-hand side to $\varepsilon/2$. On the other hand, we trivially have
\begin{eqnarray*}
\|\widehat{\nu} P_t-\nu P_t\|_{\textsc{tv}} & \le & \|\widehat{\nu}-\nu \|_{\textsc{tv}}  \ = \ \nu(\cS^c) \ \le \ \frac{\varepsilon}{2}.
\end{eqnarray*}
By the triangle inequality, we deduce that $\|{\nu} P_t-\pi\|_{\textsc{tv}}\le \varepsilon$, as desired.
\end{proof}
\subsection{Proof of fast mixing}
We are now ready to establish Proposition \ref{pr:fast}. First, by Proposition \ref{pr:uniform}, there are constants $\delta,\theta>0$ that do not depend on $n$, such that for
\begin{eqnarray}
\label{timescale}
t & = & \frac{\theta}{\delta}\max_i\eta_i,
\end{eqnarray}
we have $\max_{i}\EE[e^{\theta\eta_i(t)}] \le 4$. By Markov's inequality, this implies that
\begin{eqnarray*}
\PP\left(\eta(t) \notin K_r\right) & \le & \frac{4}{r},
\end{eqnarray*}
where $K_r:=\left\{\xi\in\Omega \colon \frac{1}{n}\sum_{i=1}^ne^{\theta\xi_i} \le r\right\}$. On the other hand, by the Markov property,
\begin{eqnarray}
\|P_{t+s}(\eta,\cdot)-\pi\|_{\textsc{tv}} & \le & \PP\left(\eta(t)\notin K_r\right) + \max_{\xi\in K_r}\|P_{s}(\xi,\cdot)-\pi\|_{\textsc{tv}}.
\end{eqnarray}
Thus, Proposition \ref{pr:fast} will follow if we can show fast mixing from any configuration in $K_r$, where $r$ is allowed to be arbitrarily large but fixed independently of $n$. In words, the uniform downward drift allows us to replace the assumption (\ref{assume:liquid}) by the much stronger condition
\begin{eqnarray}
\frac{1}{n}\sum_{i=1}^ne^{\theta\eta_i} & = & \cO(1).
\end{eqnarray}
In this regime, the empirical distribution $\frac{1}{n}\sum_{i=1}^n\delta_{\eta_i}$ is uniformly integrable: upon passing to a subsequence, we may assume that (\ref{assume:empirical}) holds, with the limit $q$ having mean $\lambda=\rho$.
Under this condition, we will now show that for any $s=s(n)$ that diverges with $n$ (say, $s=\log n$),
\begin{eqnarray}
\label{conclude:entropy}
D_{\textsc{kl}}\left(P_{s}(\eta,\cdot)||\pi\right) & = & o(n).
\end{eqnarray}
The conclusion will then follow by applying Lemma \ref{lm:entropygap} with $\nu=P_s(\eta,\cdot)$: indeed, the time $t$ defined at (\ref{ineq:entropygap}) satisfies $t=o(n)$, and we have $\|P_{t+s}(\eta,\cdot)-\pi\|_{\textsc{tv}}=\|\nu P_{t}-\pi\|_{\textsc{tv}}\le \varepsilon$, showing that $\tmix(\eta;\varepsilon)\le t+s=o(n)$, as desired. The remainder of the section is devoted to proving (\ref{conclude:entropy}). 

Define the \emph{combinatorial entropy} of the non-negative integers $a_0,\ldots,a_K$ to be
\begin{eqnarray*}
h\left(a_0,\ldots,a_K\right) & := & \log\left\{{a_0+a_1+\cdots+a_K \choose a_0, a_1,\ldots,a_K }\right\},
\end{eqnarray*}
and extend this definition to finitely-supported sequences $a_0,a_1,\ldots$ by simply ignoring the non-zero entries. 
Now, consider one sequence $(a_0^n,a_1^n,\ldots)$ for each value of $n\ge 1$, and assume that 
\begin{eqnarray*}
& (i) & a_0^n+a_1^n+\cdots \ \xrightarrow[n\to\infty]{} + \infty  \\
& (ii) & \forall k\in \dZ_+, \ \frac{a_k^n}{a_0^n+a_1^n+\cdots}\xrightarrow[n\to\infty]{}  p_k,
\end{eqnarray*}
for some law $p\in\cP(\dZ_+)$. Then a classical application of Stirling's approximation yields
\begin{eqnarray}
\label{stirling}
\liminf_{n\to\infty}\frac{h\left(a_0^n,a_1^n,\ldots\right)}{a_0^n+a_1^n+\cdots} & \ge & H(p).
\end{eqnarray}
Now, observe that the number of configurations $\xi\in\Omega$ satisfying (\ref{exchange}) is precisely \begin{eqnarray*}
\exp\left(\sum_{i=0}^\infty h\left(nW_{k,\ell}(t)\colon \ell\in\dZ_+\right)\right).
\end{eqnarray*}
Using Lemma \ref{lm:exchange} and the classical fact that \emph{conditioning reduces entropy} (see \cite{MR2239987}),  we deduce that
\begin{eqnarray*}
H\left(P_t(\eta,\cdot)\right) & \ge & \sum_{i=0}^\infty \EE\left[h\left(nW_{k,\ell}(t)\colon \ell\in\dZ_+\right)\right].
\end{eqnarray*}
We may finally let $n\to\infty$: Proposition \ref{pr:matrix} ensures that  for fixed $t\ge 0$ and $(k,\ell)\in \dZ_+^2$, we have 
\begin{eqnarray*}
W_{k,\ell}(t) & \xrightarrow[n\to\infty]{\PP} & w_{k,\ell}(t) \ = \ \PP\left(X(0)=k,X(t)=\ell\right).
\end{eqnarray*}
Applying (\ref{stirling}) with $a_\ell^n=nW_{k,\ell}(t)$ and then Fatou's lemma, we see that for any fixed $t\ge 0$.
\begin{eqnarray*}
\liminf_{n\to\infty}\,\frac{H\left(P_t(\eta,\cdot)\right)}{n} & \ge & \mathbb{H}\left(X(t)|X(0)\right),
\end{eqnarray*}
where $\mathbb{H}\left(Y|X\right)$ denotes the conditional entropy of $Y$ given $X$ defined by
\begin{eqnarray}
\mathbb{H}\left(Y|X\right) & := &
 \sum_{k,\ell}\PP\left(X=k,Y=\ell\right)\log\frac{1}{\PP\left(Y=\ell|X=k\right)}.
\end{eqnarray}
On the other hand, since $|\Omega|=\cN(n,m)$ with $\cN(n,m)$ defined at (\ref{binomial}), the uniform law $\pi$ satisfies
\begin{eqnarray*}
\frac{H(\pi)}{n} & \xrightarrow[n\to\infty]{} & (1+\rho)\log(1+\rho) - \rho\log {\rho}.
\end{eqnarray*}
The right-hand side is  $H\left(\cG\left(\rho\right)\right)$. Since $D_{\textsc{kl}}\left(\mu||\pi\right)  = H(\pi)-H(\mu)$ for $\pi$  uniform, we conclude that
\begin{eqnarray*}
\limsup_{n\to\infty}\frac{D_{\textsc{kl}}\left(P_t(\eta,\cdot)||\pi\right)}{n} & \le & H\left(\cG(\rho)\right)-\mathbb{H}\left(X(t)|X(0)\right)
\end{eqnarray*}
for any fixed $t\ge 0$. The conclusion (\ref{conclude:entropy}) follows, since the left-hand side is a decreasing function of $t$ (this is a general fact, see e.g. \cite{MR2341319}) and the right-hand side can be made arbitrarily small by choosing $t$ large enough, thanks to Lemma \ref{lm:mixing} and Proposition \ref{pr:fluid} (recall that $\lambda=\rho$ here).

\section{Dissolution of the solid phase}
\label{sec:solid}
In this final section, we start by verifying that there is a unique (explicit) solution to the Cauchy problem (\ref{fluid:dense}), and then show that the latter describes the evolution of the solid phase in the sense of Proposition \ref{pr:densefluid}. We finally put things together to prove Theorem \ref{th:main}.
\subsection{Resolution of the main differential equation}
With the setting of Theorem \ref{th:main} in mind, we fix a sequence of numbers $u_1\ge u_2\ldots\ge 0$ such that
\begin{eqnarray}
\label{assume:summable}
\sum_{i=1}^\infty u_i & \le &  \rho,
\end{eqnarray}
and we consider the Cauchy problem (\ref{fluid:dense}), repeated here for convenience:
\begin{eqnarray*}
\bu_i(t) & = & \left(u_i -\int_0^t\frac{1}{1+\rho-\sum_{j=1}^\infty \bu_j(s)}ds\right)_+.
\end{eqnarray*}  By a \emph{solution} to this problem, we will here mean a collection $(\bu_i)_{i\ge 1}$,  where for each $i\ge 1$,  $\bu_i\colon \dR_+\to [0,u_i]$ is a measurable function such that the equation (\ref{fluid:dense}) holds for all $t\in\dR_+$. We first deal with the uniqueness, and will construct an explicit solution afterwards.
\begin{lemma}[Uniqueness]\label{lm:uniqueness}There is at most one one solution to (\ref{fluid:dense}).
\end{lemma}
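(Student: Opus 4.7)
The plan is to collapse the infinite-dimensional system (\ref{fluid:dense}) into a single scalar autonomous ODE. Given any solution $(\bu_i)_{i\ge 1}$, the quantity
\[
\Phi(t) \;:=\; \int_0^t \frac{ds}{1+\rho - \sum_{j\ge 1}\bu_j(s)}
\]
is well-defined and absolutely continuous: the integrand lies in $\left[\frac{1}{1+\rho},1\right]$, since $0 \le \sum_j \bu_j(s) \le \sum_j u_j \le \rho$ by (\ref{assume:summable}). Rewritten in terms of $\Phi$, equation (\ref{fluid:dense}) becomes simply $\bu_i(t) = (u_i - \Phi(t))_+$ for every $i$. Summing yields
\[
\sum_{j\ge 1}\bu_j(t) \;=\; S(\Phi(t)), \qquad S(x) \;:=\; \sum_{j\ge 1}(u_j - x)_+,
\]
where $S\colon [0,\infty) \to [0,\rho]$ is continuous (by dominated convergence) and non-increasing. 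Substituting back, $\Phi$ must satisfy the scalar Cauchy problem $\Phi(0)=0$ and
\[
\Phi'(t) \;=\; \frac{1}{1+\rho - S(\Phi(t))} \quad \text{a.e.}
\]
Since $\Phi$ determines each $\bu_i$ via $\bu_i = (u_i - \Phi)_+$, it suffices to prove uniqueness of $\Phi$.

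The key observation is that the right-hand side $g(y) := 1/(1+\rho - S(y))$ is continuous and satisfies $\frac{1}{1+\rho} \le g(y) \le 1$ (the lower bound from $S \ge 0$, the upper from $S(y)\le S(0) \le \rho$). Hence $\Phi$ is strictly increasing with $\Phi(t)\ge t/(1+\rho)$, so $\Phi\colon[0,\infty)\to[0,\infty)$ is a homeomorphism. Its inverse $\tau$ is absolutely continuous with $\tau'(y) = 1/g(y) = 1+\rho - S(y)$, giving the explicit closed form
\[
\tau(y) \;=\; (1+\rho)\, y - \int_0^y S(z)\,dz,
\]
which depends only on the initial profile $(u_i)_{i\ge 1}$. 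Two solutions of (\ref{fluid:dense}) would therefore have the same inverse $\tau$, hence the same $\Phi$, hence the same $\bu_i$, proving uniqueness.

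The main obstacle is precisely the lack of Lipschitz regularity flagged by the authors just after Proposition \ref{pr:densefluid}: the $(\cdot)_+$ truncation makes the vector field in (\ref{fluid:dense}) non-Lipschitz at the times when the solid column $\bu_i$ vanishes, so that the Picard--Lindel\"of theorem does not apply. The monotonicity of $S$ is what tames this degeneracy: although $g$ need not be Lipschitz (for instance when infinitely many $u_i$ accumulate at some value), it is bounded and strictly positive, which is all one needs for the inverse-function argument to bypass a fixed-point construction. I expect the companion existence statement, obtained by reading the formula for $\tau$ backwards, to be essentially immediate, so that the real content of the uniqueness step is this reduction to the scalar monotone ODE.
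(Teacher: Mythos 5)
Your proof is correct, and it takes a genuinely different route from the paper's. The paper compares two putative solutions directly: writing $\Delta(t)=\sum_i|\bu_i(t)-\bw_i(t)|$, it uses the a priori bound $\bu_i(t)\vee\bw_i(t)\le\left(u_i-\frac{t}{1+\rho}\right)_+$ to get $\Delta(t)\le r(t)\int_0^t\Delta(s)\,ds$ with the kernel $r(t)=\sum_i{\bf 1}_{(u_i>t/(1+\rho))}$, which may blow up at $0$ but is integrable by (\ref{assume:summable}), and closes with Gr\"onwall; the explicit solution is then constructed separately, piecewise between the times $t_i$ of (\ref{times}). You instead collapse the whole system onto the cumulative melting variable $\Phi$, note that (\ref{fluid:dense}) says precisely $\bu_i=(u_i-\Phi)_+$, hence $\Phi'=g(\Phi)$ with $g=1/(1+\rho-S(\cdot))$ continuous and pinched between $1/(1+\rho)$ and $1$, and integrate by separation of variables: the inverse of $\Phi$ must equal $(1+\rho)y-\int_0^yS(z)\,dz$, a function of the data only, so $\Phi$ and therefore every $\bu_i$ is determined. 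This buys more than the paper's lemma: read backwards it gives existence, and in fact your $\Phi$ is exactly the paper's function $f$, with $\Phi^{-1}(u_i)$ reproducing the times $t_i$; the Gr\"onwall route, by contrast, never needs a closed form and is in that sense more robust. One step worth making explicit: after substituting $\sum_j\bu_j(s)=S(\Phi(s))$, the integrand defining $\Phi$ is continuous, so $\Phi$ is $C^1$ with $\Phi'\ge 1/(1+\rho)$ everywhere; the inverse-function step (equivalently, checking that $t\mapsto(1+\rho)\Phi(t)-\int_0^{\Phi(t)}S(z)\,dz$ has derivative $1$ and vanishes at $0$) then requires no measure-theoretic care about the a.e.\ statements in your write-up.
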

\begin{proof}
For $t\ge 0$, we introduce the key quantity
\begin{eqnarray*}
r(t) & := & \sum_{i=1}^\infty{\bf 1}_{(u_i>\frac{t}{1+\rho})}.
\end{eqnarray*}
Note that the non-increasing function $t\mapsto r(t)$ may diverge at zero, but that by condition (\ref{assume:summable}),
\begin{eqnarray}
\label{integrability}
\int_0^\infty r(s)\,{\rm d}s & = & (1+\rho)\sum_{i=1}^\infty u_i \ \le \ \rho(1+\rho).
\end{eqnarray}
Now, let $(\bu_i)_{i\ge 1}$ and $(\bw_i)_{i\ge 1}$ be two solutions to (\ref{fluid:dense}), and define for all $t\ge 0$,
\begin{eqnarray*}
\Delta(t) & := & \sum_{i=1}^\infty\left|\bv_i(t)-\bw_i(t)\right|\ \ \in [0,\rho].
\end{eqnarray*}
From the equation (\ref{fluid:dense}), it readily follows that 
\begin{eqnarray*}
\left|\bu_i(t)-\bw_i(t)\right| & \le & \int_0^t\Delta(s)\,{\rm d}s.
\end{eqnarray*}
Moreover, the left-hand side is zero for $t\ge (1+\rho)u_i$, because (\ref{fluid:dense}) implies
\begin{eqnarray*}
\bu_i(t)\vee \bw_i(t) & \le & \left(u_i-\frac{t}{1+\rho}\right)_+.
\end{eqnarray*}
Summing over all $i\ge 1$, we deduce from these two observations that
\begin{eqnarray*}
\Delta(t) & \le & r(t)\int_0^t\Delta(s)\,{\rm d}s\ 
 \le \ \int_0^tr(s)\Delta(s)\,{\rm d}s,
\end{eqnarray*}
where the second line follows from the fact that $r$ is non-increasing. Thanks to the integrability of $r$ (\ref{integrability}) and the uniform bound on $\Delta$, Grönwall's Lemma now implies that $\Delta(t)=0$ for all $t\in\dR_+$.
\end{proof}
Let us now construct an explicit solution to (\ref{fluid:dense}). We start by setting, for each $i\ge 1$, 
\begin{eqnarray}
\label{times}
t_i & := & u_{i}\left(1+\rho+\frac{(i-1)u_{i}}2-\sum_{j=1}^{i-1}u_j\right)-\frac 12\sum_{j=i}^\infty u_j^2.
\end{eqnarray}
Our assumptions on $(u_j)_{j\ge 1}$ easily imply that $t_i \to 0$ as $i\to\infty$.
Note also that the sequence $(t_i)_{i\ge 1}$ is non-increasing, since for all $i\ge 1$,
\begin{eqnarray*}
t_i-t_{i+1} & = & (u_{i}-u_{i+1})\left(1+\rho-\sum_{j=1}^iu_j\right)+\frac{i}2\left(u_i^2-u_{i+1}^2\right) \ \ge \ 0. 
\end{eqnarray*}
We now define a function $f\colon \dR_+\to\dR_+$ as follows: 
\begin{eqnarray*}
f(t) & := &  
\left\{
\begin{array}{ll}
0 & \textrm{ if }t=0.\\
\sqrt{\frac{2(t-t_{i+1})}{i}+\left(\frac{1+\rho-\sum_{j=1}^iu_j}{i}+\gamma_{i+1}\right)^2}-\frac{1+\rho-\sum_{j=1}^iu_j}{i}
& \textrm{ if }t\in (t_{i+1},t_i]\\
\frac{t-t_1}{1+\rho} + u_1 & \textrm{ if } t\ge t_1.
\end{array}
\right.
\end{eqnarray*}
Let us use the convenient convention $t_0:=+\infty$. Then for each $i\ge 0$, $f$ is increasing and ${\cal C}^\infty$ on $(t_{i+1},t_i)$. Moreover, for $i\ge 1$ we have $f(t_i +)=u_i=f(t_i)$, so $f$ is in fact continuous and increasing on $(0,\infty)$. In particular, $\lim f(0+)$ exists and must be equal to $\lim_{i\to\infty} f(t_i)$, which is $0$ because $u_i\to 0$. Now, for each $i\ge 0$ and each $t\in(t_{i+1},t_i)$, we easily compute
\begin{eqnarray*}
f'(t) & = & \frac{1}{1+\rho-\sum_{j=1}^i(u_j-f(t))}\ = \ \frac{1}{1+\rho-\sum_{j=1}^\infty(u_j-f(t))_+},
\end{eqnarray*}
where the second equality follows from the fact that $f(t)<u_i$ if and only if $t<t_i$, by strict monotony. Consequently, we may safely write, for all $t\ge 0$,
\begin{eqnarray*}
f(t) & = & \int_0^t\frac{1}{1+\rho-\sum_{i=1}^\infty(u_i-f(s))_+}\,ds.
\end{eqnarray*}
Setting $\bu_i(t) = \left(u_i-f(t)\right)_+$ yields a well-defined solution to (\ref{fluid:dense}) -- the only one, by Lemma \ref{lm:uniqueness}.
\subsection{Relating the dissolution rate to the density of the solid phase}
In our mean-field setting, the dissolution rate of the solid phase is  the proportion $Q_0(t)$ of empty sites in the system. By metastability, the latter should  only depend on the total density of particles in the solid phase. The purpose of this section is to make this intuition rigorous. For simplicity, we will here assume that the solid phase is restricted to the region $\{1,\ldots,L\}$ for some fixed $L\ge 0$, i.e.
\begin{eqnarray}
\label{assume:finitary}
\max_{i\ge L+1}\eta_i & = & o(n).
\end{eqnarray}
Note that this property is then preserved by the dynamics: by Chernov's bound, the uniform downward drift of Proposition \ref{pr:uniform} ensures that for any $t,\varepsilon>0$,
\begin{eqnarray}
\label{key:uniform}
\PP\left(\exists i>L\colon \eta_i(t)\ge \varepsilon n\right) & \le & 2ne^{-n\theta\varepsilon}\left(1+\max_{i\ge L+1}e^{\theta\eta_{i}-\delta t}\right),
\end{eqnarray}
where we recall that the constant $\theta>0$ does not depend on $n$. Since the right-hand side is summable in $n$, we see that the solid phase remains restricted to the region $\{1,\ldots,L\}$: for any time $t=t(n)$,
\begin{eqnarray}
\label{cons:finitary}
\max_{i\ge L+1}\eta_i(t) & = & o(n),
\end{eqnarray}
almost-surely. In particular, the proportion of particles in the solid phase at time $t$ is $\sum_{i=1}^L\frac{\eta_i(t)}{n}+o(1)$. The main result of this section is the following relation between this number and $Q_0(t)$.
\begin{proposition}[Dissolution rate]\label{pr:finitary}In the regime (\ref{assume:sparse})-(\ref{assume:finitary}), we have for any fixed $s>0$,
\begin{eqnarray*}
\left|Q_0(ns)-\frac 1{1+\rho-\sum_{i=1}^L\frac{\eta_i(ns)}{n}}\right| & \xrightarrow[n\to\infty]{\PP} & 0.
\end{eqnarray*}
\end{proposition}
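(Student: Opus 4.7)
The strategy hinges on the separation of timescales between the solid phase (evolving on scale $n$) and the liquid phase (relaxing on scale $1$). The plan is to fix $\tau>0$, examine the process over the short window $[ns-\tau,ns]$, and argue that the solid-phase heights barely change on this window, while the liquid phase has ample time to reach its metastable geometric profile $\cG(\lambda)$, with $\lambda$ the instantaneous liquid density. The conclusion will then follow by sending $n\to\infty$ first and $\tau\to\infty$ second.

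First, since each solid site $i\le L$ receives and loses particles at total rate at most $2$, the difference $|\eta_i(ns)-\eta_i(ns-\tau)|$ is stochastically dominated by a Poisson$(2\tau)$ random variable. Since $L$ is fixed, this yields
\begin{eqnarray*}
\BABS{\sum_{i=1}^L\frac{\eta_i(ns)}{n}-\sum_{i=1}^L\frac{\eta_i(ns-\tau)}{n}} & \xrightarrow[n\to\infty]{\PP} & 0,
\end{eqnarray*}
so it suffices to relate $Q_0(ns)$ to the solid-phase density at the earlier time $ns-\tau$. Moreover, (\ref{key:uniform}) guarantees $\max_{i>L}\eta_i(ns-\tau)=o(n)$ with overwhelming probability.

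Next, introduce the auxiliary zero-range process $\tilde\eta$ on the $n-L$ sites $\{L+1,\ldots,n\}$ with initial state $(\eta_{L+1}(ns-\tau),\ldots,\eta_n(ns-\tau))$, and couple $\tilde\eta$ with the restriction of the real process to these sites over $[ns-\tau,ns]$ so that the two agree outside of solid-liquid exchanges. The total rate of such exchanges is $\cO(L)=\cO(1)$, so at most $\cO(\tau)=o(n)$ liquid sites can be affected with high probability. Consequently $\|Q(ns)-\tilde Q(\tau)\|\to 0$ in probability, where $\tilde Q(t):=\frac{1}{n-L}\sum_{i>L}\delta_{\tilde\eta_i(t)}$ denotes the empirical distribution of $\tilde\eta$ (the discrepancy between $Q$ and its liquid-phase analogue being $\cO(L/n)=o(1)$).

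By Proposition \ref{pr:uniform} applied at the large time $ns-\tau$, the liquid-phase heights have uniformly bounded exponential moments, so $\tilde Q(0)$ is tight in $\ell^1(\dZ_+)$. Extracting a subsequence along which $\eta_i(ns-\tau)/n\to\bu_i$ for each $i\le L$ and $\tilde Q(0)\to q\in\cP(\dZ_+)$, conservation of mass forces $q$ to have mean $\lambda:=\rho-\sum_{i\le L}\bu_i$. Proposition \ref{pr:chaos} then yields $\tilde Q(\tau)\to\qq(\tau)$ in probability, with $\qq$ the fluid limit starting from $q$, and Proposition \ref{pr:fluid} gives $\qq(\tau)\to\cG(\lambda)$ as $\tau\to\infty$. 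Reading off the $0$-th coordinate and combining with the two previous paragraphs yields $Q_0(ns)\to 1/(1+\rho-\sum_{i\le L}\bu_i)$ along the subsequence; a standard subsequence argument then upgrades this to convergence in probability, which is the claim. The main technical obstacle is carefully executing the coupling in the second step and justifying that Proposition \ref{pr:chaos} may be applied despite the initial empirical distribution $\tilde Q(0)$ being itself random, which requires conditioning on the extracted subsequence.
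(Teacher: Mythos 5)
Your plan runs into a genuine gap at the final step, where the two limits $n\to\infty$ and $\tau\to\infty$ are interchanged. For each fixed $\tau$, your argument (granting the coupling and the random-initial-data issues you flag) identifies the limit of $Q_0(ns)$ along the subsequence as $\qq^{(q_\tau)}_0(\tau)$, where $q_\tau$ is the limiting empirical distribution of the liquid phase at time $ns-\tau$. Crucially, this initial datum $q_\tau$ \emph{changes with} $\tau$: you are not following one fixed fluid trajectory for a long time, but evaluating, at time $\tau$, a different trajectory for each $\tau$. Proposition \ref{pr:fluid} is stated for a fixed initial law $q$, and its proof (entropy production plus compactness) is non-quantitative, so it provides no rate and no uniformity over initial data; consequently the assertion ``$\qq(\tau)\to\cG(\lambda)$ as $\tau\to\infty$'' does not apply to the moving family $\qq^{(q_\tau)}(\tau)$. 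To close this you would need either a relaxation estimate uniform over the compact class of mean-$\lambda$ laws with the exponential-moment bound supplied by Proposition \ref{pr:uniform} (not available in the paper), or an extra argument exploiting the flow-consistency $q_{\tau'}=\qq^{(q_\tau)}(\tau-\tau')$ together with entropy, i.e.\ essentially a strengthening of Proposition \ref{pr:fluid} to complete orbits. As written, the step is unjustified. (The earlier subsequence extraction should also be phrased as convergence in distribution to possibly random limits, since $\eta_i(ns-\tau)/n$ and $\tilde Q(0)$ are random; you acknowledge this but it adds real work.)

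The paper avoids precisely this difficulty by taking a window whose length grows with $n$ rather than a fixed $\tau$: it empties the $L$ solid sites (Lemma \ref{lm:truncation}), observes that the truncated process is completely liquid and hence mixes \emph{in total variation} in time $o(n)$ by Proposition \ref{pr:fast}, so that at time $n\varepsilon$ its empirical profile is the geometric law with the reduced density, while the coupling error and the drift of the solid heights over $[0,n\varepsilon]$ cost only $O(L\varepsilon)$ in expectation; a Markov-property shift then transports the estimate to time $ns$, and $\varepsilon\downarrow 0$ concludes. Because the window length $n\varepsilon$ dominates the $o(n)$ mixing time for \emph{every} liquid initial configuration, no uniformity in the initial liquid profile is ever needed — which is exactly what your fixed-$\tau$ scheme, relying only on Propositions \ref{pr:chaos} and \ref{pr:fluid}, cannot provide.
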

To prove this proposition, we will ``erase'' the solid phase so that the fast mixing result of Section \ref{sec:fast} becomes applicable, and then compare this truncated process to the original one. 
\begin{lemma}[Truncation]\label{lm:truncation}Fix $\eta\in\Omega$, and let $\widehat{\eta}$ be   obtained by \emph{emptying} the first $L$ sites, i.e. 
\begin{eqnarray*}
\widehat{\eta}_i & = & 
\left\{
\begin{array}{ll}
{\eta}_i & \textrm{ if } i>L\\
0 & \textrm{ if } i\le L.
\end{array}
\right.
\end{eqnarray*} 
Then, the zero-range processes   starting from $\eta$ and $\widehat{\eta}$ can be coupled in such a way that their respective empirical profiles $(Q(t)\colon t\ge 0)$ and $(\widehat{Q}(t)\colon t\ge 0)$ satisfy, for any horizon $T\ge 0$,
\begin{eqnarray*}
\EE\left[\sup_{t\in[0,T]}\left\|\widehat{Q}(t) - Q(t)\right\| \right] & \le & \frac{2L(1+T)}n.
\end{eqnarray*}
\end{lemma}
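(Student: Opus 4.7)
The plan is to run both chains under a synchronous coupling generated by the same collection of independent rate-$1/n$ Poisson point processes $\Xi_{i\to j}$ indexed by ordered pairs $(i,j)\in [n]\times [n]$, as in the proof of Lemma~\ref{lm:uniform}: whenever $\Xi_{i\to j}$ fires, each of the two chains independently attempts to move one particle from $i$ to $j$, the move being executed if and only if the corresponding source site is non-empty. Since $\widehat{\eta}(0)\le \eta(0)$ coordinate-wise by construction, a short case analysis on $(\eta_i,\widehat{\eta}_i)$ at each firing shows that this inequality is preserved at all later times: the only ``asymmetric'' firings are those at which $\widehat{\eta}_i=0<\eta_i$, in which case only $\eta$ moves, and this operation neither makes $\eta_i$ drop below $\widehat{\eta}_i$ nor makes $\widehat{\eta}_j$ exceed $\eta_j$.

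With the domination $\widehat{\eta}(t)\le \eta(t)$ in hand, the standard bin-counting bound $\|\widehat{Q}(t)-Q(t)\|\le \frac{2|D(t)|}{n}$ holds, where $D(t):=\{i\in [n]\colon \eta_i(t)\ne \widehat{\eta}_i(t)\}$, since each site contributes to at most two bins of each empirical distribution. I would then split $D(t)$ according to the position relative to the truncation threshold $L$: at most $L$ indices of $D(t)$ lie in $\{1,\ldots,L\}$, while for the remaining contribution I introduce the \emph{boundary excess}
\begin{eqnarray*}
E(t) &:=& \sum_{i>L}\left(\eta_i(t)-\widehat{\eta}_i(t)\right),
\end{eqnarray*}
which is a non-negative integer starting at $E(0)=0$ and which dominates $|D(t)\cap\{L+1,\ldots,n\}|$ because every term of the sum is at least $1$ on that set. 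Consequently $|D(t)|\le L+E(t)$, and the claim boils down to showing $\EE[\sup_{t\in[0,T]}E(t)]\le LT$.

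The last step is a direct rate computation under the coupling. By the previous case analysis, the gap $\eta_k-\widehat{\eta}_k$ is unchanged at every site by firings at which both chains move or neither moves, and it changes only at the asymmetric firings where $\widehat{\eta}_i=0<\eta_i$; in that sub-case, the gap decreases by $1$ at $i$ and increases by $1$ at $j$. Consequently $E(t)$ can only increase (by exactly $+1$) when $\Xi_{i\to j}$ fires with $i\le L$, $\widehat{\eta}_i=0<\eta_i$, and $j>L$, an event of total rate
\begin{eqnarray*}
\sum_{i\le L\colon \widehat{\eta}_i(t)=0<\eta_i(t)}\frac{n-L}{n} &\le& L.
\end{eqnarray*}
Hence the number of up-jumps of $E$ on $[0,T]$ is stochastically dominated by a Poisson variable of mean $LT$, which bounds $\EE[\sup_{[0,T]}E(t)]$ by $LT$ and yields the desired $\frac{2L(1+T)}{n}$. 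The main point to be careful about is precisely this last rate computation: a naive attempt to bound $|D(t)|$ directly would yield a growth rate proportional to $|D(t)|$ itself and an unusable exponential estimate; isolating $E(t)$ and observing that only the ``pure excess'' sites within the truncated region $\{1,\ldots,L\}$ feed the boundary is what replaces this exponential growth by the linear rate $L$.
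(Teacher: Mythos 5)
Your proposal is correct and follows essentially the same route as the paper: the same coupling via shared Poisson clocks $\Xi_{i\to j}$, the same monotonicity $\widehat{\eta}(t)\le\eta(t)$, the same decomposition of the disagreement set into at most $L$ truncated sites plus the excess $\sum_{i>L}(\eta_i-\widehat{\eta}_i)$, and the same observation that this excess can only increase at clock rings with source in $\{1,\ldots,L\}$, whose total rate is at most $L$, giving the Poisson$(LT)$ bound and hence $\frac{2L(1+T)}{n}$. Your slightly finer rate analysis (requiring $\widehat{\eta}_i=0<\eta_i$ and $j>L$) is a harmless refinement of the paper's cruder count of all rings emanating from $\{1,\ldots,L\}$.
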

\begin{proof}
Recall the standard construction of the zero-range process using an independent, rate$-\frac 1n$ Poisson point process $\Xi_{i\to j}$ for each source-destination pair $(i,j)\in[n]\times [n]$: the successive points indicate the times at which the source attempts to send a particle out to the destination, and the jump is allowed if and only if the source is not empty. We may couple the processes starting from $\eta$ and $\widehat{\eta}$ by simply using the same underlying Poisson clocks for both processes. We then have
\begin{eqnarray*}
\widehat{\eta}(t)&  \le & \eta(t),
\end{eqnarray*}
for all $t\ge 0$. Indeed, this inequality is true at time $t=0$ by construction, and it is preserved by the dynamics because any jump that is allowed for the left-hand side must also be allowed for the right-hand side. 
In particular, this implies that
\begin{eqnarray*}
\sum_{i=L+1}^n\left|\eta_i(t)-\widehat{\eta}_i(t)\right| & = & \sum_{i=L+1}^n\eta_i(t)-\sum_{i=L+1}^n\widehat{\eta}_i(t).
\end{eqnarray*}
The right-hand side equals zero at time $t=0$, and then the only clock rings that may increment it (by $1$ unit each time) are those whose source $i$ is in $\{1,\ldots,L\}$. Over the time interval $[0,T]$, the total number of such rings is just a Poisson random variable with mean $LT$, and hence
 \begin{eqnarray}
 \label{lips}
\EE\left[\sup_{t\in[0,T]}\sum_{i=L+1}^n\left|\widehat{\eta}_i(t)-\eta_i(t)\right|\right] & \le & LT.
\end{eqnarray}
Now, observe that for each $k\in\dZ_+$, we have
\begin{eqnarray*}
\left|Q_k(t)-\widehat{Q}_k(t)\right| & = & \left|\frac{1}{n}\sum_{i=1}^n{\bf 1}_{(\eta_i(t)=k)}-\frac{1}{n}\sum_{i=1}^n{\bf 1}_{(\widehat{\eta}_i(t)=k)}
\right|\  \le \  \frac{2}{n}\sum_{i=1}^n{\bf 1}_{\left(\eta_i(t)\ne\widehat{\eta}_i(t)\right)}{\bf 1}_{(\eta_i=k)}.
\end{eqnarray*}
Summing over $k$, we deduce that
\begin{eqnarray*}
\left\|Q(t)-\widehat{Q}(t)\right\| & \le & \frac{2}{n}\sum_{i=1}^n{\bf 1}_{\left(\eta_i(t)\ne\widehat{\eta}_i(t)\right)}
\ \le \ \frac{2}{n}\left(L+\sum_{i=L+1}^n\left|\eta_i(t)-\widehat{\eta}_i(t)\right|\right),
\end{eqnarray*}
and the claim now readily follows from (\ref{lips}).
\end{proof}
\begin{remark}[Stochastic regularity]\label{remark:smooth} The above construction of the zero-range process enjoys another useful property: letting $\Xi_i=\sum_j \Xi_{j\to i}+\Xi_{i\to j}$ denote the clock process for arrivals and departures on site $i$ -- which is a Poisson point process of intensity $2$ --, we clearly have for all $0\le s\le t$, 
\begin{eqnarray}
\left|\eta_i(t)-\eta_i(s)\right| & \le & \Xi_{i}\left(\left[s,t\right]\right).
\end{eqnarray}
\end{remark}

\begin{proof}[Proof of Proposition \ref{pr:finitary}]If $\eta$ satisfies (\ref{assume:finitary}), then its truncation $\widehat{\eta}$ is completely liquid in the sense of (\ref{assume:liquid}). By Proposition \ref{pr:fast}, this ensures that the zero-range process $(\widehat{\eta}(t)\colon t\ge 0)$ mixes in time $o(n)$. In particular, for fixed $\varepsilon>0$, the empirical profile $\widehat{Q}(n\varepsilon)$ must satisfy (\ref{equilibrium}) but with $m-(\eta_1+\cdots+\eta_L)$ particles instead of $m$, i.e.
\begin{eqnarray*}
\EE\left[\left\|\widehat{Q}(n\varepsilon)-\cG\left(\frac{m}{n}-\sum_{i=1}^L\frac{{\eta}_i}{n}\right)\right\|\right] & \xrightarrow[n\to\infty]{} & 0.
\end{eqnarray*}
On the other hand, under the coupling of Lemma \ref{lm:truncation}, we have 
\begin{eqnarray*}
\EE\left[\left\| {Q}(n\varepsilon)-\widehat{Q}(n\varepsilon)\right\|\right] & \le & 2L\varepsilon+\frac{2L}{n}.
\end{eqnarray*}
Finally, Remark \ref{remark:smooth} implies that
\begin{eqnarray*}
\EE\left[\left|\sum_{i=1}^L\frac{{\eta}_i}{n}-\sum_{i=1}^L\frac{{\eta}_i(n\varepsilon)}{n}\right|\right] & \le & 2L\varepsilon.
\end{eqnarray*}
Combining these three estimates, we easily deduce that
\begin{eqnarray*}
\limsup_{n\to\infty} \EE\left[\left|Q_0(n\varepsilon)-\frac 1{1+\rho-\sum_{i=1}^L\frac{\eta_i(n\varepsilon)}n}\right|\right] & \le & 4L\varepsilon.
\end{eqnarray*}
This seems rather weak compared to what we want to establish. However by the Markov property and (\ref{cons:finitary}), the result also applies to the shifted time $n\varepsilon+t(n)$, for any choice of $t(n)\ge 0$. Choosing $t(n)=(s-\varepsilon)n$ for fixed $s>0$ yields
 \begin{eqnarray*}
\limsup_{n\to\infty} \EE\left[\left|Q_0(ns)-\frac 1{1+\rho-\sum_{i=1}^L\frac{\eta_i(ns)}n}\right|\right] & \le &4L\varepsilon.
\end{eqnarray*}
Since this is valid for any choice $0<\varepsilon\le s$, the result follows. 
\end{proof}
\subsection{Tightness and convergence}
We are now in position to prove Proposition \ref{pr:densefluid}. We first establish a weak form of it, namely that in the finitely supported case where
\begin{eqnarray}
L & := & \sup\{i\ge 1\colon u_i >0\}
\end{eqnarray}
is finite, we have $U_i^n(t)\to \bu_i(t)$ in probability for fixed $i,t\ge 0$, with the convenient  short-hand 
\begin{eqnarray}
U_i^n(t) & := & \frac{\eta_i(nt)}{n}.
\end{eqnarray}
\begin{proof}[Proof of the weak form]We proceed by induction over $L$. The base case $L=0$ is trivial, by  (\ref{cons:finitary}). To move from $L-1$ to $L$, we only need to establish the convergence 
\begin{eqnarray*}
\left|U_i^n(t)-\bu_i(t)\right| & \xrightarrow[n\to\infty]{\PP} &  0
\end{eqnarray*}
for $i\le L$ and $t\in [0,T]$, where $T=t_L$ is the time at which the non-increasing function $\bu_L$ reaches $0$, as defined in (\ref{times}). In this range, we have
\begin{eqnarray}
\bu_i(t) & = & u_i-\int_0^t\frac{1}{1+\rho-\sum_{j=1}^L\bu_i(s)}ds.
\end{eqnarray}
On the other hand, by Dynkin's formula, we have the decomposition
\begin{eqnarray}
U_i^n(t) & = & U_i^n(0) -\int_0^{t}Q_0(ns)ds + \int_0^t{\bf 1}_{(U_i^n(s)=0)}ds+M_i^n(t),
\end{eqnarray}
where $M_i^n$ is a martingale. We will show that the right-hand sides of these two equations are \emph{close to each other} as $n\to\infty$, by a term-by-term comparison. First, we have
\begin{eqnarray}
U_i^n(0) & \xrightarrow[n\to\infty]{} & u_i,
\end{eqnarray}
by assumption (\ref{assume:profile}).
Second, Proposition \ref{pr:finitary} readily implies that
\begin{eqnarray*}
\sup_{t\in[0,T]}\left|\int_0^{t}Q_0(ns)\,{\rm d}s - \int_0^{t}\frac{1}{1+\rho-\sum_{i=1}^LU_i^n(s)}\,{\rm d}s\right|  & \xrightarrow[n\to\infty]{\PP} & 0.
\end{eqnarray*}
Third, setting $\Delta_n(t):=\max_{i\le L}|U_i^n(t)-u_i(t)|$, we have
\begin{eqnarray*}
\sup_{t\in[0,T]}\left|\int_0^{t}\frac{1}{1+\rho-\sum_{i=1}^Lu_i(s)}-\int_0^{t}\frac{1}{1+\rho-\sum_{i=1}^LU_i^n(s)}ds\right|  & \le & L\int_0^T \Delta_n(t)\,{\rm d}t.
\end{eqnarray*}
Fourth, observe that if $U_i^n(s)=0$ for some $s\le t$ then $\Delta_n(s)\ge u_i(s)\ge u_L(t)$, so that
\begin{eqnarray}
\label{localtime}
\int_0^t{\bf 1}_{(U_i^n(s)=0)}\,{\rm d}s & \le & \frac 1{u_L(t)}\int_0^t{\Delta_n(s)}ds.
\end{eqnarray}
Finally, since $U_i^n(t)$ makes jumps of  size $\frac{1}{n}$ at rate at most $2n$, we have 
$\EE\left[\left(M_i^n(T)\right)^2\right] \le  \frac{2T}{n}$ which, by Doob's maximal inequality, implies that 
\begin{eqnarray*}
\sup_{t\in[0,T]}|M_i^n(t)| & \xrightarrow[n\to\infty]{\PP} & 0.
\end{eqnarray*}
By Gr\"onwall's Lemma, we conclude that
\begin{eqnarray}
\label{gron}
\Delta_n(t) & \le & o_\PP(1)\exp\left\{Lt+\frac{t}{u_L(t)}\right\},
\end{eqnarray}
where the term $o_\PP(1)$ does not depend on $t$ and tends to $0$ in probability as $n\to\infty$. For fixed $t<T$, this already implies that $\Delta_n(t)\to 0$ in probability, because $u_L(t)>0$. In particular,  we may now go back to (\ref{localtime}) and improve it to 
\begin{eqnarray*}
\int_0^T{\bf 1}_{(U_i^n(s)=0)}\,{\rm d}s & \xrightarrow[n\to\infty]{L^1} & 0.
\end{eqnarray*}
This improvement suppresses the term $\frac{t}{u_L(t)}$ in (\ref{gron}), and the conclusion follows.
\end{proof}

\begin{proof}[Proof of Proposition \ref{pr:densefluid}]
Remark \ref{remark:smooth} is more than enough to ensure, for each $i\ge 1$,  the tightness (as $n$ varies) of $\left(U_i^n(t)\colon t\ge 0\right)$ in the Skorokhod space $D(\dR_+,\dR)$, and the almost-sure continuity of any sub-sequential limit. By diagonal extraction, we may find a subsequence along which
\begin{eqnarray*}
\left(U_1^{n},U_2^n,\ldots\right) & \xrightarrow[]{d} & \left(U_1^\star,U_2^\star,\ldots\right),
\end{eqnarray*}
weakly with respect to the product topology, with each coordinate being equipped with the topology of uniform convergence on compact sets. We already know that each limiting coordinate $U_i^\star$ belongs to $\cal C(\dR_+,\dR_+)$ with probability $1$, and our task boils down to proving that necessarily, $$U_i^\star(t)=\bu_i(t)$$ almost-surely, for each $i\ge 1$ and each $t\in\dR_+$.
Note that this is true at time $t=0$, by our assumption. Now let $s>0$. Since $u_i\le \frac{\rho}{i}$, it follows from Proposition \ref{pr:uniform} that $U^\star_i(s)=0$ for all $i>L:= \frac{\rho\theta}{\delta s}$. We may thus apply the weak form at time $s$ to deduce that necessarily,
\begin{eqnarray*}
U_i^\star(t) & = & \left(U_i^\star(s) -\int_s^t\frac{1}{1+\rho-\sum_{k=1}^\infty U_k^\star(u)}\,{\rm d}u\right)_+,
\end{eqnarray*}  
 for all $t\ge s$ and all $i\ge 1$. Letting $s\to 0$, we see that $(U_i^\star)_{i\ge 1}$ satisfies  (\ref{fluid:dense}) with probability $1$, and the uniqueness in Lemma \ref{lm:uniqueness} concludes the proof.
\end{proof}
\subsection{Putting things together: proof of the main result}
As a corollary of the above analysis, we obtain the following explicit value for the dissolution time. 
\begin{corollary}[Maximum occupancy]
In the regime (\ref{assume:sparse})-(\ref{assume:order})-(\ref{assume:profile}), we have for any fixed $t\ge 0$  
\begin{eqnarray}
\max_{1\le i\le n}\frac{\eta_i(nt)}{n} & \xrightarrow[n\to\infty]{\PP} & \bu_1(t).
\end{eqnarray}
Moreover, the function $\bu_1$ satisfies
\begin{eqnarray}
\bu_1(t)>0 & \Longleftrightarrow & t<t_1:=(1+\rho)u_1-\frac 12\sum_{i=1}^\infty u_i^2.
\end{eqnarray}
\end{corollary}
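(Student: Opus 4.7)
The plan is to handle the two statements in turn, with most of the work going into the first.

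For the limit $\max_i \eta_i(nt)/n\to \bu_1(t)$, I would prove matching upper and lower bounds in probability. The lower bound is immediate from Proposition \ref{pr:densefluid} applied with $i=1$, which gives $\eta_1(nt)/n\to \bu_1(t)$, combined with the trivial inequality $\max_i \eta_i(nt)\ge \eta_1(nt)$.

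The upper bound is the substantive part, and I would establish it by a two-scale decomposition. Fix $\varepsilon>0$ and split the sites into the \emph{heavy} indices $\{1,\ldots,L\}$ and the \emph{light} indices $\{L+1,\ldots,n\}$, where $L=L(t,\varepsilon)$ is chosen large enough that $\theta u_{L+1}<\delta t$, with $\theta,\delta$ the constants of Proposition \ref{pr:uniform}; this is possible because the non-increasing sequence $(u_i)$ tends to $0$ thanks to (\ref{assume:summable}). For the heavy indices, Proposition \ref{pr:densefluid} gives coordinate-wise convergence $\eta_i(nt)/n\to \bu_i(t)$ in probability for each $i\le L$, and since the maximum is taken over a fixed \emph{finite} index set one deduces $\max_{i\le L}\eta_i(nt)/n\to \max_{i\le L}\bu_i(t)=\bu_1(t)$ in probability, using the monotonicity $\bu_i(t)\le \bu_1(t)$ inherited from (\ref{fluid:dense}). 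For the light indices, Proposition \ref{pr:uniform} combined with Markov's inequality yields
\begin{eqnarray*}
\PP\left(\eta_i(nt)\ge \varepsilon n\right) & \le & 2\, e^{-\theta \varepsilon n}\left(1+e^{\theta \eta_i-\delta nt}\right),
\end{eqnarray*}
and assumption (\ref{assume:profile}) gives $\eta_{L+1}\le (u_{L+1}+o(1))n$, so our choice of $L$ forces $e^{\theta \eta_i-\delta nt}=o(1)$ uniformly in $i>L$. A union bound over the $n$ sites then yields $\PP(\max_{i>L}\eta_i(nt)\ge \varepsilon n)\to 0$. Putting the two pieces together, $\limsup_n \max_i\eta_i(nt)/n\le \bu_1(t)+\varepsilon$ in probability, and since $\varepsilon$ was arbitrary the upper bound follows. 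The boundary case $t=0$ is trivial from the ordering (\ref{assume:order})--(\ref{assume:profile}).

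For the second statement, I would simply invoke the explicit construction from the previous subsection: $\bu_1(t)=(u_1-f(t))_+$, where $f\colon \dR_+\to\dR_+$ is the continuous, strictly increasing function built there, satisfying $f(0)=0$ and $f(t_i)=u_i$ for every $i\ge 1$. It follows at once that $\bu_1(t)>0$ iff $t<t_1$, and the explicit value of $t_1$ is obtained by specializing the defining formula (\ref{times}) to $i=1$, which gives precisely $(1+\rho)u_1-\tfrac12\sum_{i\ge 1}u_i^2$.

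The only delicate point is that the cutoff $L$ grows as $t\downarrow 0$, but since $t>0$ is held fixed and $u_i\to 0$, a finite $L$ always suffices; nothing else in the argument depends on $n$. The two propositions already established thus combine essentially off-the-shelf, and no new estimate is required.
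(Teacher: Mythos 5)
Your proof is correct and follows essentially the same route as the paper: a finite cutoff $L$, Proposition \ref{pr:densefluid} for the sites $i\le L$, the exponential moment bound of Proposition \ref{pr:uniform} plus a union bound (which is exactly the estimate (\ref{key:uniform})) for the sites $i>L$, and the explicit solution $\bu_1(t)=(u_1-f(t))_+$ with $f(t_1)=u_1$ for the dissolution time. The only cosmetic difference is that you control $\eta_{L+1}$ via the profile assumption ($\theta u_{L+1}<\delta t$), while the paper uses the deterministic pigeonhole bound $\eta_{L+1}\le \rho_\star n/(L+1)$; both work equally well.
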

\begin{proof}
If the maximum were taken over $1\le i\le L$ for some fixed $L$, then the result would be a direct consequence of Proposition \ref{pr:densefluid}. The only point that needs to be justified is the fact that $\max_{1\le i\le n}\frac{\eta_i(nt)}{n}=\max_{1\le i\le L}\frac{\eta_i(nt)}{n}$ provided $L$ is chosen large enough. This fact is a clear consequence of (\ref{key:uniform}), together with the observation that $\eta_{L+1}\le \frac{\rho_\star}{L+1}$.
\end{proof}
This is all we need to complete the proof of Theorem \ref{th:main}.  We split the argument into two parts.  
\begin{proof}[Proof of the upper-bound]
If $t>t_1$ then $\bu_1(t)=0$, so Proposition \ref{pr:fast} ensures fast mixing from the random configuration  $\xi:=\eta(nt)$, i.e. 
\begin{eqnarray*}
\frac{\tmix(\xi;\varepsilon)}{n} & \xrightarrow[n\to\infty]{\PP} & 0.
\end{eqnarray*}
Explicitating the definitions, this means that for any fixed $s>0$,
\begin{eqnarray*}
\sup_{A\subseteq\Omega}\left|P_{ns}(\xi,A)-\pi(A)\right| & \xrightarrow[n\to\infty]{\PP} & 0.
\end{eqnarray*}
Taking expectations and noting that $\EE[P_{ns}(\xi,A)]=P_{n(t+s)}(\eta,A)$ by  Markov's property, we obtain
\begin{eqnarray*}
\sup_{A\subseteq\Omega}\left|P_{n(t+s)}(\eta,A)-\pi(A)\right| & \xrightarrow[n\to\infty]{} & 0.
\end{eqnarray*}
In other words, for any fixed $\varepsilon\in(0,1)$,
\begin{eqnarray*}
\limsup_{n\to\infty}\left\{\frac{\tmix(\eta;\varepsilon)}{n}\right\} & \le & t+s.
\end{eqnarray*}
Since $s$ and $t$ can be chosen arbitrarily close to $0$ and $t_1$ respectively, the upper-bound is proved. 
\end{proof}
\begin{proof}[Proof of the lower-bound]Our distinguishing event will be 
\begin{eqnarray}
 A & := & \left\{\eta\in\Omega\colon \max_{1\le i\le n}\eta_i \ge \sqrt{n}\right\}.
 \end{eqnarray} 
We note for clarity that the choice $\sqrt{n}$ is irrelevant here: any $k=k(n)$ satisfying $\log n\ll k(n)\ll n$ will work. If $t<t_1$, then $\bu_1(t)>0$, and so the above corollary shows that
\begin{eqnarray*}
P_{nt}(\eta,A) & \xrightarrow[n\to\infty]{} & 1.
\end{eqnarray*}
On the other hand, we have seen in Section \ref{sec:liquid} that when $\xi$ has the uniform law $\pi$, 
\begin{eqnarray}
\PP\left(\xi_i=k\right) & = & \frac{\cN(n-1,m-1)}{\cN(n,m)}.
\end{eqnarray}
The right-hand side is at most $\left(\frac{m}{m+n-1}\right)^k=\left(\frac{\rho}{\rho+1}+o(1)\right)^k$, which is $o(n^{-2})$ when $k\ge\alpha\log n$ with $\alpha$ large enough. Summing over all possible choices for $i$ and $k$, we deduce that
\begin{eqnarray*}
\pi(A) & \xrightarrow[n\to\infty]{} & 0.
 \end{eqnarray*}
These two estimates prove that $\left\|P_{nt}(\eta,A)-\pi\right\|_{\textsc{tv}}\to 0$, or equivalently, that for all $\varepsilon\in(0,1)$, 
\begin{eqnarray*}
\liminf_{n\to\infty}\left\{\frac{\tmix(\eta;\varepsilon)}{n}\right\} & \ge & t.
\end{eqnarray*}
Since $t$ can be chosen arbitrarily close to $t_1$, the lower-bound follows.
\end{proof}
\bibliographystyle{plain}
\bibliography{draft}
\end{document}